\newcommand*{\MRref}[2]{ \href{http://www.ams.org/mathscinet-getitem?mr=#1}{MR #1}}
\newcommand*{\arxiv}[1]{\href{http://www.arxiv.org/abs/#1}{arXiv: #1}}
\numberwithin{equation}{section}
\theoremstyle{plain}
\newtheorem{theorem}[equation]{Theorem}
\newtheorem{lemma}[equation]{Lemma}
\newtheorem{proposition}[equation]{Proposition}
\newtheorem{corollary}[equation]{Corollary}
\theoremstyle{definition}
\newtheorem{definition}[equation]{Definition}
\theoremstyle{remark}
\newtheorem{remark}[equation]{Remark}
\newtheorem{example}[equation]{Example}
\DeclareMathOperator{\Aut}{Aut}% automorphism group
\DeclareMathOperator{\cspn}{\overline{span}}
\DeclareMathOperator{\Ind}{Ind}
\DeclareMathOperator{\supp}{\mathrm{supp}}
\DeclareMathOperator{\id}{\mathrm{id}}
\DeclareMathOperator{\triv}{\mathrm{tr}}
\DeclareMathOperator{\Inf}{Inf}
\DeclareMathOperator{\im}{Im}
\DeclareMathOperator{\Mor}{\mathrm{Mor}}
\DeclareMathOperator{\pt}{\mathrm{pt}}
\newcommand*{\CP}{\mathrm{CP}} % crossed product functor
\newcommand*{\nb}{\nobreakdash}
\newcommand*{\Star}{\(^*\)\nobreakdash-}
\newcommand*{\dd}{\,d}
\newcommand*{\C}{\mathbb C}
\newcommand*{\Lb}{\mathcal L}%adjointable operators on a Hilbert module
\renewcommand*{\L}{\mathcal L}
\newcommand*{\K}{\mathcal K}% Compact operators
\renewcommand*{\H}{\mathcal H}
\newcommand*{\F}{\mathcal F} % fixed module
\newcommand*{\Cst}{C^*}%C*-algebra
\newcommand*{\cont}{C}%continuous functions
\newcommand*{\contz}{\cont_0}%continuous functions vanishing at infinity
\newcommand*{\contc}{\cont_c}%continuous functions with compact support
\newcommand*{\contb}{\cont_b}%continuous bounded functions
\newcommand*{\M}{\mathcal M}%multiplier algebra
\newcommand*{\Id}{\textup{id}}%identity
\newcommand*{\Ad}{\textup{Ad}}%conjugation by a unitary
\newcommand*{\U}{\mathcal U}% unitaries
\newcommand*{\E}{\mathcal E}% Hilbert module
\newcommand*{\EE}{\mathbb E}% Integral expectation
\newcommand*{\En}{E} % subspaces of B(G)... for construction of E-crossed products
\newcommand*{\Y}{\mathcal Y}%
\newcommand*{\defeq}{\mathrel{\vcentcolon=}}% used for definitions
\newcommand*{\congto}{\xrightarrow\sim}
\newcommand*{\braket}[2]{\langle#1\!\mid\!#2\rangle}
\newcommand*{\Braket}[2]{\left\langle#1\!\mid\!#2\right\rangle}
\newcommand*{\bbraket}[2]{\mathopen{\langle\!\langle}#1\!\mid\!#2\mathclose{\rangle\!\rangle}}
\renewcommand*{\r}{\textup r}% right translation action
\newcommand*{\sbe}{\subseteq} % inclusion
\newcommand*{\cstar}{\texorpdfstring{$C^*$\nobreakdash-\hspace{0pt}}{*-}}
\newcommand*{\into}{\hookrightarrow}
\newcommand*{\onto}{\twoheadrightarrow}
\newcommand*{\red}{r}
\renewcommand*{\max}{\mathrm{max}}
\newcommand*{\un}{u}
\newcommand*{\pn}{\mathrm{\mu}}
\newcommand*{\qn}{\mathrm{\nu}}
\newcommand*{\Fix}{\mathrm{Fix}}
\newcommand*{\dual}[1]{\widehat{#1}}
\newcommand*{\dualG}{\widehat{G}}
\newcommand*{\I}{\mathcal{I}} % special ideal in the crossed product Morita equivalent to fixed-point algebra
\newcommand{\Zt}{\mathcal{Z}} % Centrum
\newcommand{\point}{\mathrm{pt}}
\newcommand*{\st}{\mathrm{st}}
\newcommand*{\bs}{\backslash}
\newcommand{\ie}{\emph{i.e.}}
\newcommand{\eg}{\emph{e.g.~}}
\newcommand{\cf}{\emph{cf.}}
\renewcommand{\iff}{if and only if }
\newcommand*{\Cstd}[2]{\mathfrak C^*({#1},{#2})}% category of C*-algebras with homomorphisms
\newcommand*{\Cstnd}[2]{\mathfrak C^*_{\mathrm{nd}}({#1},{#2})}% category of C*-algebras with nondegenerate homomorphisms
\newcommand*{\CCstd}[1]{\mathfrak C^*({#1})}% category of C*-algebras with homomorphisms for trivial groups
\newcommand*{\CCstnd}[1]{\mathfrak C^*_{\mathrm{nd}}({#1})}% category of C*-algebras with nondegenerate homomorphisms for trivial groups
\newcommand*{\CCCstd}{\mathfrak C^*}% category of C*-algebras with homomorphisms for trivial groups
\newcommand*{\CCCstnd}{\mathfrak C^*_{\mathrm{nd}}}% category of C*-algebras with nondegenerate homomorphisms for trivial groups
\begin{document}
\title[Generalized fixed-point algebras]{Universal and exotic generalized fixed-point algebras for weakly proper actions and duality}

\author{Alcides Buss}
\email{alcides.buss@ufsc.br}
\address{Departamento de Matem\'atica\\
 Universidade Federal de Santa Catarina\\
 88.040-900 Florian\'opolis-SC\\
 Brazil}

\author{Siegfried Echterhoff}
\email{echters@math.uni-muenster.de}
\address{Mathematisches Institut\\
 Westf\"alische Wilhelms-Universit\"at M\"un\-ster\\
 Einsteinstr.\ 62\\
 48149 M\"unster\\
 Germany}

\begin{abstract}
Given  a \cstar{}dynamical system $(A,G,\alpha)$, we say that $A$ is a weakly proper
$X\rtimes G$-algebra if there exists a proper $G$\nb-space $X$ together with a
nondegenerate $G$\nb-equivariant \Star{}homomorphism $\phi:\contz(X)\to \M(A)$.
 Weakly proper $G$\nb-algebras form a large subclass of the class of
 proper $G$\nb-algebras in the sense of Rieffel.
 In this paper we show that weakly proper $X\rtimes G$-algebras allow the
 construction of \emph{full} fixed-point algebras $A^{G,\alpha}_u$
 corresponding to the full crossed product $A\rtimes_{\alpha}G$, thus solving, in this setting,
 a problem stated by Rieffel in his 1988's original article on proper actions.
 As an application we obtain a general Landstad duality result for arbitrary coactions
 together with a new and functorial construction of maximalizations of coactions.

 The same methods also allow the construction of exotic generalized fixed-point algebras associated to crossed-product norms lying between the reduced and universal ones.
 Using these, we give complete answers to some questions on duality theory for exotic crossed products recently
 raised by Kaliszewski, Landstad and Quigg. % in \cite{Kaliszewski-Landstad-Quigg:Exotic}.
 %As an application we derive a categorical version of Landstad Duality Theorem extending to exotic coactions previous results by Quigg and Kaliszewski in the setting for reduced and maximal coactions.
\end{abstract}

\subjclass[2010]{46L55, 22D35}

\keywords{weakly proper action, generalized fixed-point algebra, exotic crossed product, coactions, Landstad duality}

\thanks{Supported by Deutsche Forschungsgemeinschaft  (SFB 878, Groups, Geometry \& Actions), CNPq (Ciências sem Fronteira) {and PVE/CAPES} -- Brazil. }
\thanks{We would like to thank the referee for the valuable suggestions which helped us to improve this paper.}

\maketitle

\section{Introduction}\label{sec:introduction}
If a locally compact group $G$ acts properly on a locally compact space $X$, then
we call a \cstar{}algebra $A$ a \emph{weakly proper $X\rtimes G$-algebra} if there exists an
action $\alpha:G\to \Aut(A)$ together with a $G$\nb-equivariant nondegenerate \Star{}homomorphism
$\phi:\contz(X)\to \M(A)$. The notion of weakly proper $X\rtimes G$-algebras is much weaker
than the notion of an action of the proper groupoid $X\rtimes G$, which requires that
$\phi$ takes values in the center $\Zt\M(A)$ of the multiplier algebra $\M(A)$.
On the other hand, the notion of weakly proper $X\rtimes G$-algebras is stronger than
Rieffel's notion of proper $G$\nb-algebras (or proper actions on \cstar{}algebras) as
introduced in \cite{Rieffel:Proper}.
In fact, as shown by Rieffel in \cite[Proposition 5.7]{Rieffel:Integrable_proper}, the dense subalgebra
$A_c:=\phi(\contc(X))A\phi(\contc(X))$ carries a natural $A\rtimes_{\alpha,r}G$-valued inner product
and the (reduced) generalized fixed-point algebra $A_r^{G,\alpha}$ is just
the algebra of compact operators of the module $\F(A)_r$ which denotes the completion
of $A_c$ as a Hilbert $A\rtimes_{\alpha,r}G$-module.
Rieffel's generalized fixed-point algebra can be realized as
the closure in $\M(A)$ of the \emph{fixed-point algebra with compact supports}
\begin{equation}\label{eq-fix-c}
A_c^{G,\alpha}:= \phi(\contc(G\backslash X)) \widetilde{A}_c^{G,\alpha}\phi(\contc(G\backslash X))
\end{equation}
with
$$\widetilde{A}_c^{G,\alpha}=\{m\in \M(A)^{G,\alpha}: m\phi(f), \phi(f)m\in A_c\,\mbox{ for all } f\in \contc(X)\}$$
where $\M(A)^{G,\alpha}$ denotes the classical fixed-point algebra in $\M(A)$ under the extended action.

Looking at these results, it seems at first sight that the theory of generalized fixed-point algebras in relation
to the crossed product of $A$ with $G$ makes only sense for reduced crossed products.
In fact, as observed by Rieffel in \cite[pp. 145-146]{Rieffel:Proper}, it is not clear for general
(Rieffel-) proper actions, whether the $L^1(G,A)$-valued inner product on $A_c$ satisfies the
positivity condition for \cstar{}valued inner products when mapped into the full crossed product $A\rtimes_{\alpha}G$.
On the other hand, in the recent paper \cite{Kaliszewski-Muhly-Quigg-Williams:Fell_bundles_and_imprimitivity_theoremsII}, it is shown that for certain proper actions
on \cstar{}algebras which can be realized as cross-sectional algebras of Fell bundles over certain groupoids, the
inner product with values in the maximal crossed product does make sense, and
it leads to a universal version of Rieffel's generalized fixed-point algebra in these cases.
It lies in the nature of cross-sectional algebras of Fell bundles that the results in \cite{Kaliszewski-Muhly-Quigg-Williams:Fell_bundles_and_imprimitivity_theoremsII}
are technically quite challenging. Also, one observes that
all examples considered in \cite{Kaliszewski-Muhly-Quigg-Williams:Fell_bundles_and_imprimitivity_theoremsII} are weakly proper $X\rtimes G$-algebras
in our sense for suitable $X$ (see Step~1 in the proof of \cite[Proposition~3.3]{Kaliszewski-Muhly-Quigg-Williams:Fell_bundles_and_imprimitivity_theoremsII}).

In this paper we show that, indeed, for every weakly proper $X\rtimes G$-algebra $A$,
the canonical $\contc(G,A)$-valued inner product on $A_c$ gives a well-defined inner product
with values in $A\rtimes_{\alpha}G$ and hence $A_c$ completes to
a Hilbert $A\rtimes_{\alpha}G$-module $\F_\un(A)$.
The \cstar{}algebra of compact operators on $\F_\un(A)$
is realized as a completion $A_u^{G,\alpha}$
of the generalized fixed-point algebra with compact supports $A_c^{G,\alpha}$ of (\ref{eq-fix-c}) and we
 call $A_u^{G,\alpha}$ the \emph{universal generalized fixed-point algebra of $A$}.
We say that $A$ is \emph{saturated}, if the $A\rtimes_{\alpha}G$-valued inner
product on $\F_\un(A)$ is full, in which case $\F_\un(A)$ becomes a
$A_u^{G,\alpha}-A\rtimes_{\alpha}G$ equivalence bimodule.
This is always the case if the action of $G$ on $X$ is \emph{free} and proper, but this
is not a necessary condition.
In general $\F_\un(A)$ will be a \emph{partial} $A_u^{G,\alpha}-A\rtimes_{\alpha}G$
equivalence bimodule implementing a Morita equivalence between
$A_u^{G,\alpha}$ and a suitable ideal in $A\rtimes_{\alpha}G$.

Hence the universal theory of generalized fixed-point algebras always works in the case of weakly proper actions.
Although a lot of progress has been done recently in the setting of generalized fixed-point algebras for weakly proper actions (\eg see \cite{Huef-Kaliszewski-Raeburn-Williams:Naturality_Rieffel, Huef-Raeburn-Williams:FunctorialityGFPA, Kaliszewski-Quigg-Raeburn:ProperActionsDuality}),
it is quite surprising that this fact has not been noticed so far.

Our methods also allow the construction of generalized fixed-point algebras associated to \emph{exotic} crossed products, meaning \cstar{}completions of $\contc(G,A)$ lying between
$A\rtimes_\un G$ and $A\rtimes_\red G$, as discussed in a recent paper
by Kaliszewski, Landstad and Quigg \cite{Kaliszewski-Landstad-Quigg:Exotic} and motivated by an earlier study of
exotic group algebras by Brown and Guentner \cite{Brown-Guentner:New_completions}. In the second part of this article we use this kind of generalized fixed-point algebra for weakly proper
$G\rtimes G$\nb-algebras (in which $G$ acts on itself by right translation) to study
duality results for exotic crossed products. In \cite{Kaliszewski-Landstad-Quigg:Exotic} it is shown that any $G$\nb-invariant
weak*-closed ideal $E$ in the Fourier-Stieltjes
algebra $B(G)$ of $G$ gives rise to exotic crossed-product norms $\|\cdot\|_E$ such that the
crossed products $A\rtimes_{\alpha,E}G$ always admit dual coactions $\widehat{\alpha}_E$.
Using such norms, the $A_u^{G,\alpha}-A\rtimes_{\alpha}G$ bimodule $\F_\un(A)$
factors to give a partial equivalence $A^{G,\alpha}_E-A\rtimes_{\alpha,E}G$ bimodule
$\F(A)_E$ and following ideas of Quigg in \cite{Quigg:Landstad_duality},
we shall show that there exists a canonical coaction of $G$ on $\F(A)_E$ which is compatible
with $\widehat{\alpha}_E$ and therefore implements a coaction $\delta_E$ on
the \emph{$E$\nb-generalized fixed-point algebra $A^{G,\alpha}_E$}.
The dual system for the
coaction crossed product $(A_E^{G,\alpha}\rtimes_{\delta_E}\widehat{G}, \widehat{\delta}_E)$ is isomorphic to the
original system $(A,\alpha)$ and the coaction $\delta_E$ on $B_E:=A_E^{G,\alpha}$
satisfies \emph{$E$\nb-Katayama duality} in the sense that
$$B_E\rtimes_{\delta_E}G\rtimes_{\widehat{\delta},E}G\cong B_E\otimes \K(L^2G)$$
via a certain canonical map. (In fact a similar statement is true for any
completion $A\rtimes_{\alpha,\pn}G$ of $\contc(G,A)$ by any \cstar{}norm $\|\cdot\|_\pn$ which admits a
dual coaction $\widehat{\alpha}_\pn$. Such norms are not necessarily attached to an ideal $E$ in $B(G)$).
Thus our results can be viewed as a version of Landstad duality for $E$\nb-coactions.

In particular, if we start with any coaction $\delta:B\to \M(B\otimes \Cst(G))$, then $A=B\rtimes_{\delta}\widehat{G}$
becomes a weakly proper $G\rtimes G$\nb-algebra in a canonical way. Given $E\subseteq B(G)$ as above,
the corresponding coaction $\delta_E$ on $B_E:=(B\rtimes_{\delta}\widehat{G})^{G,\widehat{\delta}}_E$
has the following properties:
\begin{itemize}
\item[(E1)] $\delta_E$ satisfies $E$\nb-Katayama duality, \ie,
$B_E\rtimes_{\delta_E}G\rtimes_{\widehat{\delta},E}G\cong B_E\otimes \K(L^2G)$.
\item[(E2)] The dual systems $(B\rtimes_\delta \widehat{G}, \widehat{\delta})$ and $(B_E\rtimes_{\delta_E}G, \widehat{\delta}_E)$
are isomorphic.
\item[(E3)] If $E=B(G)$, then $(B_u, \delta_u):=(B_{B(G)},\delta_{B(G)})$ is the maximalization of $(B, \delta)$ and if $E=B_r(G)$ (the weak*-closure
of $A(G)$), then $(B_r, \delta_r):=(B_{B_r(G)},\delta_{B_r(G)})$ is the normalization of $\delta$.
\end{itemize}
Note that for $E=B(G)$ we get maximal crossed products
and for $E=B_r(G)$ we get reduced crossed products. For $(B_u, \delta_u)$ and $(B_r, \delta_r)$
being the \emph{maximalization} and \emph{normalization} of $\delta$ means in particular that
the dual systems
$$(B_u\rtimes_{\delta_u}\widehat{G},\widehat{\delta}_u), \quad (B\rtimes_{\delta}\widehat{G},\widehat{\delta})
\quad\text{and}\quad (B_r\rtimes_{\delta_r}\widehat{G},\widehat{\delta}_r)$$
coincide, $(B_u,\delta_u)$ satisfies duality for the full crossed product and
$(B_r,\delta_r)$ satisfies duality for the reduced crossed products.
Normal coactions  have been introduced by Quigg in \cite{Quigg:FullAndReducedCoactions} and they
are in natural one-to-one correspondence to coactions of the reduced group C*-algebra $C_r^*(G)$.
It is also shown in \cite{Quigg:FullAndReducedCoactions} that every coaction has a normalization as above.
A proof that maximalizations exist was first
given in \cite{Echterhoff-Kaliszewski-Quigg:Maximal_Coactions}  (see \cite{Fischer} for a construction
in case of  quantum groups), but the construction given here is canonical and has better functorial properties.
Indeed, in the two final sections of this paper we prove that our constructions of exotic (and, in particular, maximal and reduced) generalized fixed-point algebras
are functorial for categories based on (equivariant) homomorphisms between \cstar{}algebras and extend to exotic norms the categorical version of Landstad duality
obtained by Kaliszewski, Quigg and Raeburn in \cite{Kaliszewski-Quigg-Raeburn:ProperActionsDuality}.

Furthermore, from the above results we also deduce a positive answer to \cite[Conjecture 6.14]{Kaliszewski-Landstad-Quigg:Exotic}:
if $E$ is a $G$\nb-invariant weak*-closed ideal in $B(G)$, then for any
action $\alpha:G\to \Aut(A)$ the dual coaction $\widehat{\alpha}_E$ on $A\rtimes_{\alpha,E}G$
satisfies $E$\nb-Katayama duality.
On the other hand, we shall also give an example which shows that there are coactions which do not satisfy $E$\nb-duality
for any given $E\subseteq B(G)$ as above, thus giving a negative answer to \cite[Conjecture 6.12]{Kaliszewski-Landstad-Quigg:Exotic}.

\section{Some preliminaries on proper actions}

Recall that an action of a locally compact group $G$ on a locally compact (Hausdorff) space $X$
 is called \emph{proper} if the map
$$G\times X\to X\times X; (g,x)\mapsto (g x, x)$$
is proper in the sense that inverse images of compact sets are compact. Proper actions are extremely nice: their
orbit spaces $G\backslash X$ are always locally compact and Hausdorff and all stabilizers
$G_x=\{g\in G: g x=x\}$ are compact. Another property which characterizes properness is the so-called \emph{wandering condition}: for any two compact subsets $K_1, K_2\subseteq X$ the set
$C(K_1,K_2):=\{g\in G: gK_1\cap K_2\neq \emptyset\}$ is compact in $G$.
Let $\tau:G\to\Aut(\contz(X))$ denote the corresponding action of $G$ on the algebra $\contz(X)$ of continuous functions
on $X$ which vanish at $\infty$ given by
$$\big(\tau_g(f)\big)(x)=f(g^{-1}\cdot x).$$
Let $\contz(X)\rtimes_\tau G$ denote the crossed product for the action of $G$ on $\contz(X)$ (note that for proper actions
on spaces the full and reduced crossed products coincide, so we can take either construction here).
If the action of $G$ on $X$ is proper and \emph{free} (\ie, all stabilizers are trivial),
it follows from work of Green and Rieffel (\eg see
\cite{Green:algebras_transformation_groups, Rieffel:Applications_Morita}) that there is a canonical $\contz(G\backslash X) - \contz(X)\rtimes_\tau G$ Morita equivalence
constructed as follows:
Let $B_0=\contc(G, \contz(X))$ be viewed as a dense subalgebra of $\contz(X)\rtimes_\tau G$ and let
$E_0=\contc(G\backslash X)\subseteq \contz(G\backslash X)$. Then $\F_c(X)\defeq \contc(X)$ can be made into
an $E_0-B_0$ pre-imprimitivity bimodule by defining left and right $E_0$- and $B_0$-valued
inner products and left and right actions of $E_0$ and $B_0$ on $\F_c(X)$, respectively, given by
\begin{align*}
\bbraket{\xi}{\eta}_{B_0}(t,x)&=\Delta(t)^{-1/2}\overline{\xi(x)}\eta(t^{-1} x)\\
_{E_0}\bbraket{\xi}{\eta}(G x)&=\int_G\eta(t^{-1} x) \overline{\xi(t^{-1} x)}\,dt\\
(\xi\cdot \varphi)(x)&=\int_G\Delta(t)^{-1/2}\xi(t^{-1} x)\varphi(t^{-1}, t^{-1} x))\,dt,\quad\text{and}\\
f\cdot \xi(x)&=f(G x)\xi(x)
\end{align*}
for all $\xi,\eta\in \F_c(X), \varphi\in B_0$ and $f\in E_0$. The pre-imprimitivity bimodule $\F_c(X)$ completes to give
a $\contz(G\backslash X)-\contz(X)\rtimes_\tau G$ imprimitivity bimodule $\F(X)$.
If the action of $G$ is not free, we still get a Hilbert $\contz(G\backslash X)-\contz(X)\rtimes_\tau G$-bimodule $\F(X)$,
but the $\contz(X)\rtimes_{\tau}G$-valued inner product will not be full. But if we define
$$I_X:=\cspn\{\bbraket{\xi}{\eta}_{\contz(X)\rtimes G}: \xi,\eta\in \F(X)\},$$ then $I_X$ is a closed ideal in
$\contz(X)\rtimes G$ such that $\F(X)$ becomes a $\contz(G\backslash X)-I_X$ imprimitivity bimodule.

There have been many attempts to extend the notion of properness to \cstar{}dynamical systems
$(A,G,\alpha)$ and to obtain analogues of the above Morita equivalences.
The weakest notion for proper actions is due to Rieffel (see \cite{Rieffel:Proper}, \cite{Rieffel:Integrable_proper}),
and his concept has been studied in a number of papers by several authors (\eg see \cite{Rieffel:Proper, Meyer:Generalized_Fixed, anHuef-Raeburn-Williams:ProperActions, Kaliszewski-Quigg-Raeburn:ProperActionsDuality, anHuef-Raeburn-Williams:Symmetric, Huef-Kaliszewski-Raeburn-Williams:Naturality_Rieffel, Rieffel:Integrable_proper}).
In \cite{Rieffel:Proper} Rieffel says that an action $\alpha:G\to \Aut(A)$ is \emph{proper}, if there exists a dense $\alpha$-invariant
subalgebra $A_c$ (playing the role of $\contc(X)$) such that the following conditions are satisfied:
\begin{itemize}
\item For all $a,b\in A_c$ the functions $t\mapsto \Delta(t)^{-1/2} a^*\alpha_t(b)$ and $t\mapsto a^*\alpha_t(b)$ lie in $L^1(G,A)$, and
\item for any $a,b\in A_c$ there is a unique element $_{E_0}\bbraket{a}{b}$ in
$$\M(A_c)^{G,\alpha}=\{m\in \M(A)^{G,\alpha}: mA_c, A_cm\subseteq A_c\}$$ such that for all $c\in A_c$ we have
$$_{E_0}\bbraket{a}{b} c= \int_G \alpha_t(ab^*)c\, dt.$$
\end{itemize}
Notice that, different from \cite{Rieffel:Proper}, we let $_{E_0}\bbraket{a}{b}$ act on the \emph{left} of $A_c$. Thus our constructions are \emph{dual} to the ones performed by Rieffel. A similar approach which is consistent with our constructions (up to factors involving modular functions) has been used by Meyer in \cite{Meyer:Generalized_Fixed}.

Under the above conditions, Rieffel shows that $A_c$ equipped with the $A\rtimes_{\alpha,r}G$-valued inner product
$$\bbraket{a}{b}_{A\rtimes_rG}=(t\mapsto \Delta(t)^{-1/2} a^*\alpha_t(b))\in L^1(G,A)\subseteq A\rtimes_rG$$
completes to give a Hilbert $A\rtimes_{\alpha,r}G$-module $\F(A)_r$ such that its \cstar{}algebra of compact operators can be naturally identified
with the generalized fixed-point algebra $A^{G,\alpha}$ defined as
$$A^{G,\alpha}:=\cspn\{_{E_0}\bbraket{a}{b}: a,b\in A_c\}\subseteq \M(A).$$
A proper action in this sense is called \emph{saturated} if the $A\rtimes_{\alpha,r}G$-valued inner product on $\F(A)_r$
is full, so that $\F(A)_r$ becomes a $A^{G,\alpha}-A\rtimes_{\alpha,r}G$ imprimitivity bimodule.
In general, $\F(A)_r$ will be an imprimitivity bimodule between
$A^{G,\alpha}$ and the ideal $I(A)_r:=\cspn\{\bbraket{a}{b}_{A\rtimes_rG}: a,b\in A_c\}$.
The notion of \emph{integrability} introduced in \cite{Rieffel:Integrable_proper} even extends the above notion of properness, but lacks
a suitable definition of generalized fixed-point algebras and corresponding Morita equivalences. Another
extension of Rieffel's theory is given by the theory of continuously square-integrable actions due to Meyer (see \cite{Meyer:Generalized_Fixed}), which seems to be the weakest notion of properness allowing
a construction of a \emph{reduced} generalized fixed-point algebra. However it is not clear whether such proper actions allow the construction of \emph{full} generalized fixed-point algebras.

Assume now that $A$ is a weakly proper $X\rtimes G$-algebra. Recall that this means that $X$ is a proper $G$\nb-space and
that $A$ is endowed with an action $\alpha:G\to \Aut(A)$ and a $G$\nb-equivariant nondegenerate
\Star{}homomorphism $\phi:\contz(X)\to \M(A)$. We shall sometimes simply say that $(A,\alpha)$ is a weak $X\rtimes G$-algebra.
It has been shown by Rieffel in \cite[Propositon 5.7]{Rieffel:Integrable_proper}
that $A_c:=\phi(\contc(X))A\phi(\contc(X))$ provides a dense subalgebra as in the above discussion
and therefore the action of $G$ on $A$ is proper in Rieffel's sense.

To see that the notion of weakly proper $X\rtimes G$-algebras is quite general we should remark that every action $\alpha:G\to \Aut(A)$ is Morita equivalent
to a weakly proper $G\rtimes G$\nb-action (with $G$ acting on itself by right translation): simply consider the
action $\alpha\otimes \Ad\rho:G\to A\otimes \K(L^2G)$, where $\rho$ denotes the right regular representation,
and then $1_A\otimes M: \contz(G)\to \M(A\otimes \K(L^2G))$ provides the desired $G$\nb-equivariant homomorphism, where $M:\contz(G)\to \Lb(L^2G)$ denotes the
representation by multiplication operators. We should also note that weakly proper $X\rtimes G$-algebras have been studied
in several papers (using different terminology) in the context of Rieffel-proper actions (\eg see \cite{Huef-Kaliszewski-Raeburn-Williams:Naturality_Rieffel, Huef-Raeburn-Williams:FunctorialityGFPA, Kaliszewski-Quigg-Raeburn:ProperActionsDuality}).

In what follows, we often write $f\cdot a$ (resp. $a\cdot f)$ for the element
$\phi(f)a$ (resp. $a\phi(f)$) if $a\in \M(A)$ and $f\in \contb(X)$.

\begin{definition}\label{def-dense-algebra}
Suppose that $A$ is a weakly proper $X\rtimes G$-algebra. Let $A_c:=\contc(X)\cdot A\cdot \contc(X)$ and let
$$\widetilde{A}_c^{G,\alpha}:=\{ m\in \M(A)^{G,\alpha} : m\cdot f, f\cdot m\in A_c\;\mbox{ for all } f\in \contc(X)\}.$$
Then the \emph{generalized fixed-point algebra for $A$ with compact supports} is the algebra
$$A_c^{G,\alpha}:=\contc(G\backslash X)\cdot \widetilde{A}_c^{G,\alpha}\cdot \contc(G\backslash X).$$
\end{definition}

It is straightforward to check that $A_c^{G,\alpha}$ is a \Star{}subalgebra of $\M(A)$.
In the following proposition we let $\|\cdot\|_\pn$ denote any \emph{crossed-product norm}, meaning any given
\cstar{}norm on $\contc(G,A)$ such that $\|\cdot\|_r\leq \|\cdot\|_\pn\leq \|\cdot \|_u$, where $\|\cdot\|_r$ denotes the \emph{reduced
norm} given by the (left) regular representation $\Lambda_A$ of $\contc(G,A)$ on $L^2(G,A)$ and $\|\cdot \|_u$ denotes the universal
(or maximal) norm on $\contc(G, A)$. We then write $A\rtimes_{\alpha,\pn}G$ for the completion of $\contc(G,A)$
with respect to $\|\cdot\|_\pn$ and call $A\rtimes_{\alpha,\pn}G$ the \emph{$\pn$\nb-crossed product} of $(A,G,\alpha)$.
Observe that such \emph{exotic} crossed products correspond to quotients of $A\rtimes_{\alpha,\un}G$ by ideals contained in $\ker(\Lambda_A)$.
With this notation we get:

\begin{proposition}\label{prop-full}
Suppose that $A$ is a weakly proper $X\rtimes G$-algebra.
Let $\F_c(A)=\contc(X)\cdot A {\ }$\footnote{Note that $\F_c(A)$ is a vector space, since if
$a=\sum_{k=1}^l f_i\cdot a_i$ is a linear combination, then  $a=f\cdot a$ for any $f\in C_c(X)$
such that $f\equiv 1$ on $\cup\{\supp(f_i): 1\leq k\leq l\}$.} and let $B_0=\contc(G,A)$ be viewed as a dense subalgebra of $A\rtimes_{\alpha,\pn}G$.
Then there is a $B_0$-valued inner product on $\F_c(A)$ and a right action of $B_0$ on $\F_c(A)$ defined as
\begin{equation}\label{inner-prod}
\begin{split}
\bbraket{\xi}{\eta}_{B_0}(t)&=\Delta(t)^{-1/2} \xi^*\alpha_t(\eta)\quad\text{and}\\
\xi\cdot \varphi&=\int_G \Delta(t)^{-1/2}\alpha_t\big(\xi\varphi(t^{-1})\big)\,dt
\end{split}
\end{equation}
which turns $\F_c(A)$ into a pre-Hilbert $B_0$-module. These operations extend to the
completion $\F_\pn(A)$ with respect to the norm $\|\xi\|_\pn:=\|\bbraket{\xi}{\eta}_{B_0}\|_\pn^{1/2}$, so that
$\F_\pn(A)$ becomes a Hilbert $A\rtimes_{\alpha,\pn}G$-module.
Moreover, there is a faithful \Star{}homomorphism $\Psi_\pn:A_c^{G,\alpha}\to \K(\F_\pn(A))$ with dense image,
given by
\begin{equation}
\label{left-action}
\Psi_\pn(m) \xi=m\xi\quad\mbox{for all } m\in A_c^{G,\alpha}, \xi\in \F_c(A),
\end{equation}
where on the right hand side we use multiplication inside $\M(A)$.
\end{proposition}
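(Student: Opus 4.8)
The plan is to treat the purely algebraic assertions (that the formulas land in the right spaces and satisfy the Hilbert-module identities) as routine, and to concentrate on the one genuinely delicate point, namely \emph{positivity} of the $B_0$-valued inner product inside $A\rtimes_{\alpha,\pn}G$; this is exactly the issue raised by Rieffel for full crossed products. Since $\|\cdot\|_\red\le\|\cdot\|_\pn\le\|\cdot\|_\un$, the quotient map $A\rtimes_{\alpha,\un}G\onto A\rtimes_{\alpha,\pn}G$ carries positive elements to positive elements and sends $\bbraket{\xi}{\eta}_{B_0}\in\contc(G,A)$ to the corresponding element for $\pn$; hence \textbf{it suffices to prove positivity in the universal crossed product} $A\rtimes_{\alpha,\un}G$, after which everything descends.

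For positivity I would exploit the commutative model. The nondegenerate $G$-equivariant homomorphism $\phi$ together with the canonical unitaries $u\colon G\to\U\M(A\rtimes_{\alpha,\un}G)$ forms a covariant representation of $(\contz(X),G,\tau)$ and hence integrates to a $*$-homomorphism $\phi\rtimes_\tau G\colon\contz(X)\rtimes_\tau G\to\M(A\rtimes_{\alpha,\un}G)$, sending $\psi\in\contc(G,\contz(X))$ to $\int_G\phi(\psi(t))u_t\,dt$. Using the relation $u_t a=\alpha_t(a)u_t$ one checks the key identity $a^*\,(\phi\rtimes_\tau G)(\psi)\,a=\int_G a^*\phi(\psi(t))\alpha_t(a)u_t\,dt$ for $a\in A\subseteq\M(A\rtimes_{\alpha,\un}G)$. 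Now given $\xi\in\F_c(A)$ choose, as in the footnote, a function $f\in\contc(X)$ with $f\ge0$ and $\phi(f)\xi=\xi$; then a direct computation shows $\bbraket{\xi}{\xi}_{B_0}=\xi^*\,(\phi\rtimes_\tau G)\big(\bbraket{f}{f}_{\contz(X)\rtimes G}\big)\,\xi$, where $\bbraket{f}{g}_{\contz(X)\rtimes G}(t,x)=\Delta(t)^{-1/2}\overline{f(x)}g(t^{-1}x)$ is the inner product of the imprimitivity bimodule $\F(X)$. Because the action on $X$ is proper, $\contz(X)\rtimes_\tau G$ is a genuine Hilbert-module coefficient algebra with full $=$ reduced norm, so $\bbraket{f}{f}_{\contz(X)\rtimes G}\ge0$; applying the $*$-homomorphism $\phi\rtimes_\tau G$ and sandwiching by $\xi^*(\cdot)\xi$ yields $\bbraket{\xi}{\xi}_{B_0}\ge0$. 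This is the crux of the proof.

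The remaining module structure is then routine. The wandering characterization of properness shows that $t\mapsto\bbraket{\xi}{\eta}_{B_0}(t)$ has compact support (and it is clearly continuous), so $\bbraket{\xi}{\eta}_{B_0}\in\contc(G,A)=B_0$; sesquilinearity is immediate, $\bbraket{\xi}{\eta}_{B_0}^*=\bbraket{\eta}{\xi}_{B_0}$ follows from the involution $\varphi^*(t)=\Delta(t)^{-1}\alpha_t(\varphi(t^{-1})^*)$ on $\contc(G,A)$, and $\bbraket{\xi}{\eta\cdot\varphi}_{B_0}=\bbraket{\xi}{\eta}_{B_0}*\varphi$ is a change-of-variables computation. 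One also checks that $\xi\cdot\varphi$ lands back in $\F_c(A)$: writing $\xi=\phi(f)\xi$ and using that $\supp\varphi$ is compact, $\tau_t f$ stays supported in a fixed compact set for $t\in(\supp\varphi)^{-1}$, so the integral equals $\phi(g)\cdot(\,\cdot\,)$ for a single $g\in\contc(X)$. With positivity in hand, $\F_c(A)$ completes to the Hilbert $A\rtimes_{\alpha,\pn}G$-module $\F_\pn(A)$ in the standard way.

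Finally, for the map $\Psi_\pn$ I would identify the rank-one operators. A direct computation using $\Delta(t^{-1})^{-1/2}=\Delta(t)^{1/2}$ gives $\theta_{\xi,\eta}(\zeta)=\xi\cdot\bbraket{\eta}{\zeta}_{B_0}=\big(\int_G\alpha_t(\xi\eta^*)\,dt\big)\zeta$, so the rank-one operator is left multiplication by the element $_{E_0}\bbraket{\xi}{\eta}:=\int_G\alpha_t(\xi\eta^*)\,dt\in A_c^{G,\alpha}$; since these elements span a dense subalgebra of $A_c^{G,\alpha}$ and the $\theta_{\xi,\eta}$ span a dense subalgebra of $\K(\F_\pn(A))$, the formula $\Psi_\pn(m)\xi=m\xi$ defines a $*$-homomorphism with dense image. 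Adjointability, $\Psi_\pn(m)^*=\Psi_\pn(m^*)$, reduces to $\alpha_t(m)=m$ for $m\in A_c^{G,\alpha}\subseteq\M(A)^{G,\alpha}$. Faithfulness follows because the canonical conditional expectation gives $\|\varphi\|_\pn\ge\|\varphi(e)\|_A$ for $\varphi\in\contc(G,A)$, whence $\|m\xi\|_{\F_\pn}\ge\|m\xi\|_A$; if $\Psi_\pn(m)=0$ then $m\xi=0$ in $A$ for all $\xi\in\F_c(A)$, and since $\F_c(A)$ is dense in $A$ by nondegeneracy of $\phi$, we conclude $m=0$. I expect the positivity step to be the only real obstacle; the rest is bookkeeping.
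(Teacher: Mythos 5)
Your route is, in all essentials, the paper's own: you reduce positivity to the universal crossed product via the quotient map, you factor $\bbraket{\xi}{\xi}_{B_0}=\iota_A(\xi)^*\,(\phi\rtimes G)\big(\bbraket{f}{f}_{\contz(X)\rtimes G}\big)\,\iota_A(\xi)$ through the integrated form $\phi\rtimes G$ and import positivity from the commutative imprimitivity bimodule $\F(X)$, and you identify the rank-one operator $\theta_{\xi,\eta}$ with left multiplication by $\EE(\xi\eta^*)=\int_G\alpha_t(\xi\eta^*)\,dt$, so that dense image follows from $\F_c(A)\F_c(A)^*=A_c$ and $\EE(A_c)=A_c^{G,\alpha}$. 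These are exactly the two pillars of the paper's proof (the latter being Lemma~\ref{lem-left-inner-product}, whose content -- that $\EE$ is well defined, lands in $A_c^{G,\alpha}$ and is surjective -- you are implicitly using and should cite or prove, since it is not completely formal).

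There is, however, one step that is genuinely wrong as written: your faithfulness argument. For a non-discrete locally compact group there is \emph{no} bounded evaluation at $e$ (no conditional expectation onto $A$) on $A\rtimes_{\alpha,\pn}G$, and the inequality $\|\varphi\|_\pn\geq\|\varphi(e)\|_A$ fails already for $A=\C$ and $G=\mathbb{R}$: under the Fourier transform $\Cst(\mathbb{R})\cong\contz(\mathbb{R})$, an approximate identity $\varphi_n\in\contc(\mathbb{R})$ with $\varphi_n\geq 0$ and $\int\varphi_n=1$ satisfies $\|\varphi_n\|_{\Cst}\leq 1$ while $\varphi_n(e)\to\infty$. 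So you cannot conclude $\|m\xi\|_{\F_\pn}\geq\|m\xi\|_A$. Fortunately, what you actually need is much weaker: only that the canonical map $\F_c(A)\to\F_\pn(A)$ is injective, \ie, that $\bbraket{\zeta}{\zeta}_{B_0}\neq 0$ in $A\rtimes_{\alpha,\pn}G$ whenever $0\neq\zeta\in\F_c(A)$. This is how the paper argues: $\bbraket{\zeta}{\zeta}_{B_0}$ is a \emph{nonzero} element of $\contc(G,A)$ because its value at $e$ is $\zeta^*\zeta>0$, and the embedding $\contc(G,A)\into A\rtimes_{\alpha,\pn}G$ is injective since it is injective already into the reduced crossed product and $\|\cdot\|_\pn\geq\|\cdot\|_r$. (Injectivity of the embedding requires no norm comparison with $\|\cdot\|_A$, only that the $\pn$-seminorm does not vanish on nonzero elements of $\contc(G,A)$.) With this substitution your conclusion goes through verbatim: if $\Psi_\pn(m)=0$ then $m\xi=0$ in $\F_\pn(A)$, hence $m\xi=0$ in $A$ for all $\xi\in\F_c(A)$, and since $\F_c(A)$ is dense in $A$ by nondegeneracy of $\phi$, we get $m=0$ in $\M(A)$. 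Everything else in your proposal is correct and matches the paper.
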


Before we start with the proof, we want to give a definition of an $A_c^{G,\alpha}$-valued inner product on
$\F_c(A)$ which will play a crucial role in the proof of the proposition. Recall that $A_c:=\contc(X)\cdot A\cdot \contc(X)$.
The following lemma is basically due to Rieffel and Kaliszewski-Quigg-Raeburn (see the proof of \cite[Theorem 5.7]{Rieffel:Integrable_proper} and \cite[\S 2]{Kaliszewski-Quigg-Raeburn:ProperActionsDuality}). Recall first that for any proper $G$\nb-space $X$, there exists a surjective linear map
$$\EE_\tau: \contc(X)\to \contc(G\backslash X);\quad\EE_\tau(f)(G x)=\int_G f(t^{-1} x)\, dt.$$
This extends to weakly proper $X\rtimes G$-algebras $A$ as follows:

\begin{lemma}\label{lem-left-inner-product}
There is well-defined surjective linear map $\EE:A_c\to A_c^{G,\alpha}$ such that
\begin{equation}\label{eq-E}
\EE(a)c=\int_G \alpha_t(a)c\, dt \quad \mbox{ for all } c\in A_c.
\end{equation}
The map $\EE$ has the following properties:
\begin{enumerate}
\item We have $\EE(a^*)=\EE(a)^*$, $\EE(\psi\cdot a)=\psi\cdot \EE(a)$ and $\EE(a\cdot \psi)=\EE(a)\cdot \psi$
for all $\psi\in \contc(G\backslash X)$ and $a\in A_c$.
\item For all $m\in \tilde{A}_c^{G,\alpha}$, $a\in A_c$ and $f\in \contc(X)$ we have $\EE(ma)=m\EE(a)$, $\EE(m\cdot f)=m\cdot \EE_\tau(f)$,
$\EE(am)=\EE(a)m$ and $\EE(f\cdot m)=\EE_\tau(f)\cdot m$.
\item For every $m\in A_c^{G,\alpha}$ there exists $f\in \contc(X)$ such that $\EE(m\cdot f)=\EE(f\cdot m)= m$.
\item For all pairs $f,g\in \contc(X)$ the linear map $\EE_{f,g}:A\to \M(A); a\mapsto \EE(f\cdot a\cdot g)$ is norm continuous.
\end{enumerate}
Moreover, there is an $A^{G,\alpha}_c$-valued inner product on $\F_c(A)$ given by
$$_{A_c^{\alpha}}\bbraket{\xi}{\eta}:=\EE(\xi\eta^*)$$
which then satisfies the equation
\begin{equation}\label{eq-leftinner}
\xi\cdot\bbraket{\eta}{\zeta}_{B_0}=_{A_c^{\alpha}}\!\bbraket{\xi}{\eta}\cdot\zeta
\end{equation}
for all $\xi,\eta,\zeta\in \F_c(A)$, where the left and right actions and the $B_0$-valued inner
products are as in the proposition.
\end{lemma}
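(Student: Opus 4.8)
The plan is to build the map $\EE$ by transporting the classical integration map $\EE_\tau\colon\contc(X)\to\contc(G\bs X)$ through the module structure, and then to verify the algebraic identities by a sequence of density and continuity arguments. First I would observe that for $a\in A_c$ the integrand $t\mapsto\alpha_t(a)$ is, after multiplication on the right by any $c\in A_c$, integrable: writing $a=f_1\cdot a'\cdot f_2$ with $f_i\in\contc(X)$ and $c=g_1\cdot c'\cdot g_2$, the wandering property of the proper $G$-space $X$ forces $t\mapsto f_1\cdot\alpha_t(a')\cdot\tau_t(f_2)\,\phi(g_1)$ to be compactly supported in $t$, so $\int_G\alpha_t(a)c\,dt$ converges in norm in $A$. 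This gives a candidate for $\EE(a)$ as a left multiplier of $A_c$; using that $A_c\cdot A_c$ is dense in $A$ (because $\phi$ is nondegenerate) one checks $\EE(a)$ is actually a two-sided multiplier, is $G$-invariant, and lands in $\widetilde{A}_c^{G,\alpha}$ after cutting down by $\contc(G\bs X)$. The surjectivity onto $A_c^{G,\alpha}=\contc(G\bs X)\cdot\widetilde{A}_c^{G,\alpha}\cdot\contc(G\bs X)$ and assertion (3) come together: given $m\in\widetilde{A}_c^{G,\alpha}$ and $\psi,\psi'\in\contc(G\bs X)$, pick $f\in\contc(X)$ with $\EE_\tau(f)\equiv1$ on a neighbourhood of the compact set supporting the relevant data, and then $\EE(\psi\cdot f\cdot(\psi'\cdot m))$ recovers $\psi\cdot\psi'\cdot m$ by property (2) applied formally; making this rigorous is exactly what (2) and (3) jointly encode.

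Next I would establish the functoriality identities (1)–(4). Identity (1): $\EE(a^*)=\EE(a)^*$ follows by testing against $c\in A_c$, taking adjoints of $\int_G\alpha_t(a)c\,dt$, and using unimodularity is \emph{not} available, so one must carefully track $\Delta$ — but note the defining equation $(\ref{eq-E})$ has no modular factor, so the adjoint computation $\big(\int_G\alpha_t(a)c\,dt\big)^*=\int_G c^*\alpha_t(a^*)\,dt$ and the $G$-invariance of the Haar integral give $\EE(a)^*c=\int_G c^*\alpha_t(a^*)\,dt=\big(\EE(a^*)^*\big)^*$... more precisely one shows $c^*\EE(a)^*=c^*\EE(a^*)$ for all $c$, hence $\EE(a)^*=\EE(a^*)$ since such $c^*$ span a dense set. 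The $\contc(G\bs X)$-linearity in (1) is immediate from pulling $\phi(\psi)$ (which is $G$-invariant) in and out of the integral. For (2), the identities $\EE(ma)=m\EE(a)$ etc.\ for $m\in\widetilde{A}_c^{G,\alpha}$ again follow by testing against $c$ and using that $m$ is a $G$-invariant multiplier preserving $A_c$; the mixed identities $\EE(m\cdot f)=m\cdot\EE_\tau(f)$ require knowing $\phi$ intertwines $\EE$ and $\EE_\tau$, which is the content of the remark preceding the lemma. Property (4), norm-continuity of $\EE_{f,g}\colon a\mapsto\EE(f\cdot a\cdot g)$, is the quantitative heart: one shows $\|\EE(f\cdot a\cdot g)\|\le\big(\sup_{x}\#\{t: \supp(\tau_t f)\cap\supp(g)\ni x\}\text{-type constant}\big)\cdot\|a\|$ by estimating the norm of the compactly-supported function $t\mapsto f\cdot\alpha_t(a)\cdot\tau_t(g)$ via its $L^1$-norm against a fixed compact set $C(\supp f,\supp g)$, so the bound is $\operatorname{Haar}\!\big(C(\supp f,\supp g)\big)\,\|f\|_\infty\|g\|_\infty\|a\|$.

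Finally, the $A_c^{G,\alpha}$-valued inner product $_{A_c^\alpha}\bbraket{\xi}{\eta}:=\EE(\xi\eta^*)$ is well-defined because for $\xi=f\cdot a,\ \eta=g\cdot b\in\F_c(A)=\contc(X)\cdot A$ we have $\xi\eta^*=f\cdot(ab^*)\cdot\bar g\in A_c$, and property (4) shows the value is controlled by $\|a\|\,\|b\|$, hence extends continuously. Conjugate-symmetry and $\contc(G\bs X)$-sesquilinearity follow from (1). The crucial compatibility $(\ref{eq-leftinner})$, namely $\xi\cdot\bbraket{\eta}{\zeta}_{B_0}={}_{A_c^\alpha}\bbraket{\xi}{\eta}\cdot\zeta$, is proved by a direct computation: the left side is $\int_G\Delta(t)^{-1/2}\alpha_t\big(\xi\cdot(\bbraket{\eta}{\zeta}_{B_0}(t^{-1}))\big)\,dt=\int_G\Delta(t)^{-1/2}\alpha_t\big(\xi\cdot\Delta(t^{-1})^{-1/2}\eta^*\alpha_{t^{-1}}(\zeta)\big)\,dt=\int_G\alpha_t(\xi\eta^*)\,\zeta\,dt$ after a change of variables using $\Delta(t^{-1})=\Delta(t)^{-1}$, and this last integral equals $\EE(\xi\eta^*)\cdot\zeta={}_{A_c^\alpha}\bbraket{\xi}{\eta}\cdot\zeta$ by $(\ref{eq-E})$ — valid since $\zeta\in\F_c(A)\subseteq A_c\cdot$(approximate unit), which reduces to the defining property of $\EE$ on $A_c$. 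The main obstacle I anticipate is bookkeeping the half-densities $\Delta(t)^{-1/2}$ consistently throughout — in particular making sure the change of variables $t\mapsto t^{-1}$ in the last computation produces exactly the modular-factor-free integrand of $(\ref{eq-E})$ — together with the care needed to show each map is genuinely well-defined on the vector space $\F_c(A)$ (not just on elementary tensors $f\cdot a$), which is handled by the footnoted observation that any finite sum in $\F_c(A)$ equals $f\cdot a$ for a single $f$.
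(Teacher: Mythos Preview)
Your proposal is correct and follows essentially the same route as the paper's proof: the paper is simply much terser, citing Rieffel \cite{Rieffel:Integrable_proper} for well-definedness of $\EE$ and the inner-product compatibility, Kaliszewski--Quigg--Raeburn \cite{Kaliszewski-Quigg-Raeburn:ProperActionsDuality} for (1)--(2), and Quigg \cite{Quigg:Landstad_duality} for (4), whereas you spell these arguments out explicitly. Two small remarks: in your computation of \eqref{eq-leftinner} no change of variables is actually needed --- the modular factors $\Delta(t)^{-1/2}\cdot\Delta(t^{-1})^{-1/2}=1$ cancel directly --- and for (3) the paper's version is slightly cleaner than yours: rather than choosing $f$ with $\EE_\tau(f)\equiv 1$ on a neighbourhood, one writes $m=\psi\cdot m\cdot\psi$ for some $\psi\in\contc(G\bs X)$ and picks $f\in\contc(X)$ with $\EE_\tau(f)=\psi$ exactly, so that $\EE(m\cdot f)=m\cdot\EE_\tau(f)=m\cdot\psi=m$.
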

\begin{proof} The fact that formula (\ref{eq-E}) determines a unique element in $\M(A)^{G,\alpha}$ has been shown
in the proof of \cite[Proposition 5.7]{Rieffel:Integrable_proper} and it follows easily from this formula that
$\EE(a^*)=\EE(a)^*$ for all $a\in A_c$. The formulas in (1) and (2) are easy to check
but can also be found in \cite[Lemma 2.1 and Lemma 2.3]{Kaliszewski-Quigg-Raeburn:ProperActionsDuality}.

It is shown in \cite[Lemma 2.3]{Kaliszewski-Quigg-Raeburn:ProperActionsDuality} that $f\cdot \EE(a), \EE(a)\cdot f\in A_c$.
So $\EE(a)\in \tilde{A}_c^{G,\alpha}$. Now if $a=f\cdot b\cdot g$ for some $f,g\in \contc(X)$ and $b\in A$ and if
$ \psi\in \contc(G\backslash X)^+$ such that
$\psi\equiv 1$ on $G\cdot (\supp f\cup \supp g)$, then $a=\psi\cdot a\cdot \psi$ and (1) implies
$$\EE(a)=\EE(\psi\cdot a \cdot \psi)=\psi\cdot \EE(a)\cdot \psi$$
from which it follows that $\EE$ takes values in the algebra $A_c^{G,\alpha}$. On the other hand,
if $m\in A_c^{G,\alpha}$, we can write $m=\psi\cdot m\cdot \psi$ for some function $\psi\in \contc(G\backslash X)$.
Now, choose a function $f\in \contc(X)$ such that $\EE_\tau(f)=\psi$. Then it follows from (2)
that $\EE(m\cdot f)=m\cdot \EE_\tau(f)=m$ and similarly that $\EE(f\cdot m)=m$. Since $m\cdot f\in A_c$, this shows
in particular that $\EE(A_c)=A_c^{G,\alpha}$.

After having observed surjectivity of $\EE:A_c\to A_c^{G,\alpha}$ the properties for the
$A_c^{G,\alpha}$-valued inner product follow from the proof of \cite[Proposition 5.7]{Rieffel:Integrable_proper}.

Finally, (4) follows from \cite[Lemma 3.6]{Quigg:Landstad_duality}.
\end{proof}

The map $\EE:A_c\to A_c^{G,\alpha}$ will be sometimes written as an integral $\EE(a)=\int_G^\st\alpha_t(a)dt$.
It can be shown that $\EE(a)$ is, indeed, an integral in the sense of Pettis with respect to the strict topology in $\M(A)$ (\cf~\cite[Proposition~2.2]{Buss:generalized_Fourier}).

\begin{proof}[Proof of Proposition \ref{prop-full}]
Recall from \cite[Lemma 2.16]{Raeburn-Williams:Morita_equivalence} that $\F_c(A)$ is a pre-Hilbert $B_0$-module if the right action of $B_0$ on $\F_c(A)$
and the $B_0$-valued inner product satisfy the following conditions:
\begin{itemize}
\item $\bbraket{\cdot}{\cdot}_{B_0}$ is $B_0$-linear in the
second variable,
\item $\bbraket{\xi}{\eta}_{B_0}^*=\bbraket{\eta}{\xi}_{B_0}$, and
\item $\bbraket{\xi}{\xi}_{B_0}\geq 0$ as an element
of $A\rtimes_{\alpha,\pn}G$ for all $\xi,\eta\in \F_c(A)$.
\end{itemize}
From these conditions the first two are well known (and easy to check), so
we concentrate on the positivity condition. Since $A\rtimes_{\alpha,\pn}G$ is a quotient of the universal crossed
product $A\rtimes_\alpha G=A\rtimes_{\alpha,u}G$, it suffices to show positivity in this case.

For this let $\xi\in\F_c(A)$ be given. Then we may write $\xi=f\cdot a$ for some $f\in \contc(X)$ and $a\in A$. Recall
that the canonical embedding $\iota_A:A\to \M(A\rtimes_\alpha G)$ is determined by the equations
$$(\iota_A(a)\varphi)(t)=a\varphi(t)\quad\text{and} \quad (\varphi \iota_A(a))(t)=\varphi(t)\alpha_t(a) \quad\text{for $\varphi\in B_0$}.$$
On the other hand, the $G$\nb-equivariant \Star{}homomorphism $\phi:\contz(X)\to \M(A)$ induces a \Star{}homomorphism
$\phi\rtimes G: \contz(X)\rtimes_\tau G\to \M(A\rtimes_{\alpha}G)$ such that, for a function
$g\in \contc(G, \contz(X))$ and $h\in \contc(G,A)$, we have
$\phi\rtimes G(g)h=g*h$, where $g*h(t)\defeq \int_G g(s)\cdot\alpha_s(h(s^{-1}t))ds$. Using this one easily checks
that
$$\big(\iota_A(a)(\phi\rtimes G)(g)\big)(t)=a\cdot g(t)\quad\text{and}\quad \big((\phi\rtimes G)(g) \iota_A(a)\big)(t)=g(t)\cdot \alpha_t(a)$$
for all $g\in \contc(G, \contz(X))$ and $a\in A$. We then compute
\begin{align*}
\bbraket{f\cdot a}{f\cdot a}_{B_0}(t)&=\Delta(t)^{-1/2} (f\cdot a)^*\alpha_t(f\cdot a)
= a^*\cdot\big(\Delta(t)^{-1/2} \cdot (\bar{f}\tau_t(f))\big)\cdot\alpha_t(a)\\
&=\big(i_a(a^*) (\phi\rtimes G)\big(\bbraket{f}{f}_{\contz(X)\rtimes G}\big)\iota_A(a)\big)(t).
\end{align*}
Hence, we get
$$\bbraket{\xi}{\xi}_{B_0}=\bbraket{f\cdot a}{f\cdot a}_{B_0}=\iota_A(a^*)(\phi\rtimes G)\big(\bbraket{f}{f}_{\contz(X)\rtimes G}\big) \iota_A(a)$$
inside $\M(A\rtimes_{\alpha}G)$. Since the $\contz(X)\rtimes G$-valued inner product on $\F(X)=\overline{\contc(X)}$ is known
to be positive, it follows that $\bbraket{f\cdot a}{f\cdot a}_{B_0}$ is positive in $A\rtimes_{\alpha}G$.

At this point we should also note that $\bbraket{\xi}{\xi}_{B_0}\neq 0$ if $\xi\neq 0$, which follows from the fact
that $\bbraket{\xi}{\xi}_{B_0}(e)=\xi^*\xi>0$ in $A$ and that the imbedding of $\contc(G,A)$ into $A\rtimes_{\alpha,\pn}G$
is injective (since this is already true for the reduced crossed product).

It now follows that $\F_c(A)$ completes to give a Hilbert $A\rtimes_{\alpha,\pn}G$-module $\F_\pn(A)$. The fact that
$A_c^{G,\alpha}$ acts as a dense subalgebra of $\K(\F_\pn(A))$ follows from Lemma \ref{lem-left-inner-product}:
Equation (\ref{eq-leftinner}) implies that for any pair $\xi,\eta\in \F_c(A)$ the left inner product
$_{A_c^{\alpha}}\!\bbraket{\xi}{\eta}=\EE(\xi\eta^*)$ acts as the compact operator $\Theta_{\xi,\eta}$ on $\F_\pn(A)$.
In particular, this operator is non-zero if $\xi,\eta\neq 0$. Since
$\F_c(A)\F_c(A)^*=A_c$ and since by Lemma \ref{lem-left-inner-product} we have $\EE(A_c)=A_c^{G,\alpha}$
we see that left multiplication of $A_c^{G,\alpha}$ on $\F_c(A)$ induces an injective \Star{}homomorphism
of $A_c^{G,\alpha}$ onto a dense subalgebra of $\K(\F_\pn(A))$.
\end{proof}

\begin{definition}\label{def-max-fix}
Suppose that $A$ is a weakly proper $X\rtimes G$-algebra and let $\|\cdot\|_\pn$ be any crossed-product norm on $\contc(G,A)$.
Then the $\pn$-fixed-point algebra of $A$ is defined as
$$A_\pn^{G,\alpha}:=\K(\F_\pn(A)).$$
In other words, $A_\pn^{G,\alpha}$ is the completion of $A_c^{G,\alpha}$ with respect to the norm
induced by the left action of $A_c^{G,\alpha}$ on $\F_\pn(A)$.
\end{definition}

We now want to give an alternative construction of the module $\F_\pn(A)$. For this recall
that $\F(X)=\F(\contz(X))$ denotes the canonical Hilbert $\contz(G\backslash X)-\contz(X)\rtimes_\tau G$-bimodule
for the proper $G$\nb-space $X$. To make notations a bit more convenient, we shall
 write from now on $\bbraket{f}{g}_X$ instead of $\bbraket{f}{g}_{C_0(X)\rtimes_\tau G}$ for
the $C_0(X)\rtimes_\tau G$-valued inner product on $\F(X)$.

\begin{proposition}\label{prop-factor}
Suppose that $A$ is a weakly proper $X\rtimes G$-algebra. Let
$\phi\rtimes_\pn G:\contz(X)\rtimes_{\tau}G\to \M(A\rtimes_{\alpha,\pn}G)$ denote the
canonical nondegenerate \Star{}homomorphism induced from the $G$\nb-equivariant
\Star{}homomorphism $\phi:\contz(X)\to \M(A)$. Then there is an isomorphism
of Hilbert $A\rtimes_{\alpha,\pn}G$-modules
$$\Psi_\pn: \F(X)\otimes_{\contz(X)\rtimes G} (A\rtimes_{\alpha,\pn}G)\to \F_\pn(A)$$
given on elementary tensors $f\otimes \varphi\in \contc(X)\otimes \contc(G,A)$ by
\begin{equation}\label{eq-decom}
\Psi_\pn(f\otimes\varphi)=\int_G\Delta(t)^{-1/2} \alpha_t(f\cdot \varphi(t^{-1}))\,dt.
\end{equation}
In particular, it follows that the $A\rtimes_{\alpha,\pn}G$-valued inner product on $\F_\pn(A)$
is full whenever $G$ acts freely on $X$.
\end{proposition}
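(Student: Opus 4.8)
The plan is to exhibit $\Psi_\pn$ as an isometry for the relevant inner products on a dense subspace and then verify surjectivity, after which fullness in the free case follows from a short ideal computation. First I would check that the formula (\ref{eq-decom}) is well-defined as a map on the algebraic tensor product $\contc(X)\odot\contc(G,A)$, and that it is balanced over $\contz(X)\rtimes_\tau G$: concretely, for $g\in\contc(G,\contz(X))$ one must have $\Psi_\pn\big((f\cdot g)\otimes\varphi\big)=\Psi_\pn\big(f\otimes(\phi\rtimes_\pn G)(g)\varphi\big)$, where on the left $f\cdot g$ denotes the right action of $\contz(X)\rtimes_\tau G$ on $\F(X)$ recorded in the preliminaries (the first displayed formula for $(\xi\cdot\varphi)(x)$ in Section~2). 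This is a direct change-of-variables computation using $g*h(t)=\int_G g(s)\cdot\alpha_s(h(s^{-1}t))\dd s$ and the covariance of $\alpha$; I would not grind through it. Balancedness gives a well-defined linear map on the balanced tensor product $\contc(X)\otimes_{\contz(X)\rtimes G}\contc(G,A)$ with values in $\F_c(A)=\contc(X)\cdot A$ (the integral (\ref{eq-decom}) visibly lands there, since $\alpha_t(f\cdot\varphi(t^{-1}))\in\contc(X)\cdot A$ with supports varying continuously and compactly in $t$).

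Next I would verify that $\Psi_\pn$ preserves the $A\rtimes_{\alpha,\pn}G$-valued inner products, i.e.
$$\Braket{\Psi_\pn(f\otimes\varphi)}{\Psi_\pn(g\otimes\psi)}_{B_0}
=\varphi^*\,(\phi\rtimes_\pn G)\!\big(\bbraket{f}{g}_X\big)\,\psi$$
inside $A\rtimes_{\alpha,\pn}G$, where the right-hand side is the inner product on the interior tensor product $\F(X)\otimes_{\contz(X)\rtimes G}(A\rtimes_{\alpha,\pn}G)$. This is again a bookkeeping calculation with the two module structures, but it is essentially the same computation already carried out in the proof of Proposition~\ref{prop-full}, where $\bbraket{f\cdot a}{f\cdot a}_{B_0}$ was expressed through $(\phi\rtimes G)\big(\bbraket{f}{f}_{\contz(X)\rtimes G}\big)$; the general bilinear identity is obtained by polarization and by allowing $a$ to vary over $\contc(G,A)$ rather than $A$. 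Granting this identity, $\Psi_\pn$ extends to an isometric (hence injective) module map on the completion. For surjectivity, note $\Psi_\pn$ already maps onto the dense subspace $\F_c(A)$: given $f\in\contc(X)$ and $a\in A$, pick $h\in\contc(X)$ with $h\equiv 1$ on a suitable set, observe $f\cdot a=\Psi_\pn(f'\otimes\varphi)$ for an appropriate elementary tensor — most cleanly, use a sequence $\varphi_i\in\contc(G,A)$ that is an approximate identity "concentrated near $e$" together with the $\Delta$-twist, and a change of variables, to see that $\Psi_\pn(f\otimes\varphi_i)\to f\cdot a$ in $\F_\pn(A)$. Since an isometry with dense range is an isomorphism of Hilbert modules, this proves the first assertion.

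For the final sentence, suppose the action of $G$ on $X$ is free. Then $\F(X)$ is a genuine $\contz(G\backslash X)-\contz(X)\rtimes_\tau G$ imprimitivity bimodule, so the $\contz(X)\rtimes_\tau G$-valued inner product on $\F(X)$ is full: $\cspn\{\bbraket{f}{g}_X:f,g\in\contc(X)\}=\contz(X)\rtimes_\tau G$. Because $\phi$ is nondegenerate, $\phi\rtimes_\pn G$ is a nondegenerate homomorphism into $\M(A\rtimes_{\alpha,\pn}G)$, hence $(\phi\rtimes_\pn G)\big(\contz(X)\rtimes_\tau G\big)\cdot(A\rtimes_{\alpha,\pn}G)$ is dense in $A\rtimes_{\alpha,\pn}G$. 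Combining these with the inner-product identity from the previous paragraph, the closed span of $\{\Braket{\Psi_\pn(f\otimes\varphi)}{\Psi_\pn(g\otimes\psi)}_{B_0}\}$ contains $(\phi\rtimes_\pn G)\big(\bbraket{f}{g}_X\big)\psi$ for all $f,g,\psi$, whose closed span is all of $A\rtimes_{\alpha,\pn}G$; thus the $A\rtimes_{\alpha,\pn}G$-valued inner product on $\F_\pn(A)$ is full.

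The main obstacle is the balancedness/inner-product bookkeeping in the first two paragraphs: keeping track of the two different right actions (the $\contz(X)\rtimes_\tau G$-action on $\F(X)$ versus the $B_0$-action on $\F_c(A)$) and all the $\Delta(t)^{-1/2}$ factors and $\alpha_t$-twists so that the change of variables $t\mapsto st$ in the convolution products lines up correctly. Once the identity $\Braket{\Psi_\pn(f\otimes\varphi)}{\Psi_\pn(g\otimes\psi)}_{B_0}=\varphi^*(\phi\rtimes_\pn G)(\bbraket{f}{g}_X)\psi$ is in hand, everything else is formal: isometry gives injectivity, density of $\F_c(A)$ gives surjectivity, and nondegeneracy of $\phi$ together with fullness of $\bbraket{\cdot}{\cdot}_X$ gives the freeness statement.
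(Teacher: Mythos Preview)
Your proposal is correct and follows essentially the same route as the paper: verify that $\Psi_\pn$ preserves the $A\rtimes_{\alpha,\pn}G$-valued inner products, hence extends to an isometry, and then obtain surjectivity by an approximate-identity argument concentrated near $e\in G$ (this is exactly the paper's Lemma~\ref{lem-factor}). Two small remarks: first, your separate balancedness check is redundant, since once the inner-product identity $\bbraket{\Psi_\pn(f\otimes\varphi)}{\Psi_\pn(g\otimes\psi)}_{B_0}=\varphi^*(\phi\rtimes_\pn G)(\bbraket{f}{g}_X)\psi$ holds on the algebraic tensor product, $\Psi_\pn$ automatically descends to the Hausdorff quotient defining the interior tensor product; second, the paper establishes that inner-product identity by a direct pointwise computation with the substitution $r\mapsto s^{-1}tr$, rather than by reducing to the special case in Proposition~\ref{prop-full}---your reduction works too, once you note that $\Psi_\pn(f\otimes\iota_A(a)\varphi)=(f\cdot a)\cdot\varphi$ and that such elements suffice by density, but ``allowing $a$ to vary over $\contc(G,A)$'' is not quite the right formulation of that step.
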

\begin{proof}
An easy computation shows that $\Psi_\pn(f\otimes \varphi)\cdot\psi=\Psi_\pn\big(f\otimes (\varphi*\psi)\big)$ for all
$\varphi,\psi\in \contc(G,A)$ and $f\in \contc(X)$, so that the result will follow if we can show that
$\Psi_\pn$ is an isometry with dense image in $\F_\pn(A)$. To see that it is isometric it suffices
to check $\bbraket{\Psi_\pn(f\otimes \varphi)}{\Psi_\pn(g\otimes\psi)}_{B_0}=\bbraket{f\otimes\varphi}{g\otimes\psi}_{B_0}$
for all $f\otimes \varphi, g\otimes\psi\in \contc(X)\otimes \contc(G,A)$. For this we compute
\begin{align*}
\bbraket{f\otimes\varphi&}{g\otimes\psi}_{B_0}(t)=\bbraket{\varphi}{\bbraket{f}{g}_{X}\cdot \psi}_{B_0}(t)\\
&=\varphi^* * \big(\bbraket{f}{g}_X*\psi)(t)\\
&=\int_G\int_G\Delta(s^{-1})\alpha_s\big(\varphi(s^{-1})^* \Delta(r)^{-1/2} \bar{f}\tau_r(g)\alpha_r(\psi(r^{-1}s^{-1}t))\big)\,dr\,ds\\
&\stackrel{r\mapsto s^{-1}tr}{=}\int_G\int_G\Delta(str)^{-1/2} \alpha_s\big(\varphi(s^{-1})^* \bar{f}\tau_{s^{-1}tr}(g)\alpha_{s^{-1}tr}(\psi(r^{-1}))\big)\,dr\,ds\\
&=\int_G\int_G\Delta(str)^{-1/2} \alpha_s\big(\varphi(s^{-1}))^* \tau_s(\bar{f})\tau_{tr}(g)\alpha_{tr}(\psi(r^{-1}))\,dr\,ds\\
&=\Delta(t)^{-1/2}\Psi_\pn(f\otimes\varphi)^*\alpha_t\big(\Psi_\pn(g\otimes \psi)\big)\\
&=\bbraket{\Psi_\pn(f\otimes \varphi)}{\Psi_\pn(g\otimes\psi)}_{B_0}(t).
\end{align*}
Thus the proposition will follow if we can show that $\Psi_\pn$ has dense image in $\F_\pn(A)$. We postpone the
proof for this to Lemma \ref{lem-factor} below.
\end{proof}

In what follows we need to argue at several instances with certain inductive limit topologies on
$B_0=\contc(G,A)$, $A_c=\contc(X)\cdot A\cdot \contc(X)$, $\F_c(A)=\contc(X)\cdot A$ and $A_c^{G,\alpha}$.
In case of $B_0=\contc(G,A)$, this is the usual definition of uniform convergence of nets $(\varphi_i)$ with
supports $\supp\varphi_i$ lying in a fixed compact set for all $i\in I$. In the other cases the definitions are
as follows:

\begin{definition}\label{def-ind-limit}
{\bf (1)} If $(a_i)_{i\in I}$ is a net in $\F_c(A)$ (resp. in $A_c$),
we say that $a_i\to a\in \F_c(A)$ (resp. in $A_c$) in the \emph{inductive limit topology}, if $a_i\to a$ in $A$ in the norm topology
and
there exists some $f\in \contc(X)$ such that $a_i=f\cdot a_i$ (resp. $a_i=f\cdot a_i\cdot f$) for all $i\in I$.

{\bf (2)} If $(m_i)_{i\in I}$ is a net in $A_c^{G,\alpha}$, we say that $m_i\to m\in A_c^{G,\alpha}$ in the \emph{inductive limit topology}
if $m_i\to m$ in $\M(A)$ in norm and the following conditions hold:
\begin{enumerate}[(i)]
\item There exists $\psi\in \contc(G\backslash X)$ such that $m_i=\psi\cdot m_i\cdot \psi$ for all $i\in I$, and
\item $m_i\cdot f\to m\cdot f$ and $f\cdot m_i\to f\cdot m$ in the inductive limit topology of $A_c$ for all $f\in \contc(X)$.
\end{enumerate}
\end{definition}

\begin{lemma}\label{lim-inductive-lim-top}
Let $A$ be a weakly proper $X\rtimes G$-algebra.
Then all pairings in the pre-Hilbert $A_c^{G,\alpha}-B_0$-bimodule $\F_c(A)$ are jointly continuous with respect to
the respective inductive limit topologies.
\end{lemma}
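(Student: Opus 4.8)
The plan is to verify joint continuity of each of the three relevant pairings—the $B_0$-valued inner product $\bbraket{\cdot}{\cdot}_{B_0}$, the right action $\F_c(A)\times B_0\to\F_c(A)$, and the $A_c^{G,\alpha}$-valued inner product $_{A_c^{\alpha}}\bbraket{\cdot}{\cdot}=\EE(\xi\eta^*)$—one at a time, reducing everything to continuity of an integral of the form $\int_G \Delta(t)^{-1/2}\alpha_t(\cdots)\,dt$ over a fixed compact set. The point is that in each case, once we restrict attention to nets $\xi_i=f\cdot\xi_i$ with $f\in\contc(X)$ fixed, the wandering property of the proper $G$-action on $X$ forces the relevant integrand to be supported in a fixed compact subset of $G$, so that norm convergence in $A$ of the arguments yields norm convergence of the integral.

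First I would treat the right action: suppose $\xi_i\to\xi$ in the inductive limit topology of $\F_c(A)$, say $\xi_i=f\cdot\xi_i$ with $\|\xi_i-\xi\|_A\to 0$, and $\varphi_i\to\varphi$ uniformly with $\supp\varphi_i\subseteq K$ for a fixed compact $K\subseteq G$. Then $\xi_i\cdot\varphi_i=\int_G\Delta(t)^{-1/2}\alpha_t(f\cdot\xi_i\cdot\varphi_i(t^{-1}))\,dt$, and picking $g\in\contc(X)$ with $g\equiv 1$ on $K^{-1}\cdot\supp f$ (this set is compact by properness—it is contained in the image of a compact set under the proper map) we get $\alpha_t(f\cdot\xi_i)=\alpha_t(f)\cdot\alpha_t(\xi_i)=\tau_t(f)\cdot\alpha_t(\xi_i)$ and the latter equals $g\cdot\alpha_t(f\cdot\xi_i)$ for $t\in K^{-1}$; hence $\xi_i\cdot\varphi_i=g\cdot\xi_i\cdot\varphi_i$ lands in a fixed slice of $\F_c(A)$, and a standard estimate $\|\xi_i\cdot\varphi_i-\xi\cdot\varphi\|_A\le |K|\,\sup_t\Delta(t)^{-1/2}\big(\|\xi_i-\xi\|_A\|\varphi_i\|_\infty+\|\xi\|_A\|\varphi_i-\varphi\|_\infty\big)$ closes the argument. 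The same bookkeeping handles $\bbraket{\xi_i}{\eta_i}_{B_0}(t)=\Delta(t)^{-1/2}\xi_i^*\alpha_t(\eta_i)$: here $\xi_i=f\cdot\xi_i$ and $\eta_i=g\cdot\eta_i$ force the function $t\mapsto\bbraket{\xi_i}{\eta_i}_{B_0}(t)$ to vanish unless $\bar f\,\tau_t(g)\ne 0$, i.e. unless $t\in C(\supp g,\supp f)$, which is compact by the wandering condition; uniform convergence on that compact set follows from $\|\xi_i-\xi\|_A,\|\eta_i-\eta\|_A\to 0$ together with strong continuity of $\alpha$.

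For the left inner product $\EE(\xi_i\eta_i^*)$, I would first observe that $\xi_i\eta_i^*\to\xi\eta^*$ in the inductive limit topology of $A_c$ (if $\xi_i=f\cdot\xi_i$, $\eta_i=g\cdot\eta_i$, then $\xi_i\eta_i^*=f\cdot\xi_i\eta_i^*\cdot\bar g$), and then appeal to property (4) of Lemma~\ref{lem-left-inner-product}: for fixed $f,g\in\contc(X)$ the map $a\mapsto\EE(f\cdot a\cdot g)$ is norm continuous, which immediately gives $\EE(\xi_i\eta_i^*)\to\EE(\xi\eta^*)$ in norm in $\M(A)$. To upgrade this to convergence in the inductive limit topology of $A_c^{G,\alpha}$ I would multiply by a fixed $\psi\in\contc(G\backslash X)$ (available since all $\EE(\xi_i\eta_i^*)$ sit in $\psi\cdot(\cdot)\cdot\psi$ for suitable $\psi$ coming from $f,g$) and use parts (1)--(2) of the lemma together with the already-established continuity on $A_c$ to get $\EE(\xi_i\eta_i^*)\cdot h\to\EE(\xi\eta^*)\cdot h$ and $h\cdot\EE(\xi_i\eta_i^*)\to h\cdot\EE(\xi\eta^*)$ in $A_c$ for every $h\in\contc(X)$.

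The main obstacle is purely the compact-support bookkeeping: one must consistently produce, from the fixed cutoff functions $f,g\in\contc(X)$ (and $K\subseteq G$ for the nets $(\varphi_i)$), a single compact set in $G$ outside which every integrand in the net vanishes, and a single function in $\contc(X)$ (or $\contc(G\backslash X)$) that dominates all the resulting elements—this is exactly where properness (via the wandering condition $C(K_1,K_2)$ compact) is used, and it is the only genuinely non-formal ingredient. Once the supports are pinned down uniformly, every estimate is a routine $\varepsilon$-argument using strong continuity of $\alpha$, local boundedness of $\Delta^{-1/2}$ on compacta, and the elementary inequality $\|ab-a'b'\|\le\|a-a'\|\,\|b\|+\|a'\|\,\|b-b'\|$.
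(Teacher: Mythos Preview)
Your approach is essentially the same as the paper's: both arguments reduce everything to support bookkeeping via the wandering condition and use part~(4) of Lemma~\ref{lem-left-inner-product} for the norm continuity of $\EE$. Two small points. First, there are \emph{four} pairings, not three---you omit the left action $(m,\xi)\mapsto m\xi$ of $A_c^{G,\alpha}$ on $\F_c(A)$; this one is the easiest (norm convergence of $m_i$ in $\M(A)$ and $\xi_i=f\cdot\xi_i$ give it immediately), and the paper likewise leaves both action pairings to the reader. Second, your upgrade of $\EE(\xi_i\eta_i^*)\to\EE(\xi\eta^*)$ to condition~(ii) in Definition~\ref{def-ind-limit} is too vague as written: parts (1)--(2) of Lemma~\ref{lem-left-inner-product} alone do not give $\EE(\xi_i\eta_i^*)\cdot h\to\EE(\xi\eta^*)\cdot h$ in the inductive limit of $A_c$ for arbitrary $h\in\contc(X)$, since $h$ is neither in $\contc(G\backslash X)$ nor in $\tilde A_c^{G,\alpha}$. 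The paper handles this by expanding the strict integral $g\cdot\EE(f\cdot\xi_i\eta_i^*\cdot f)=\int_G g\cdot\tau_t(f)\cdot\alpha_t(\xi_i\eta_i^*)\cdot\tau_t(f)\,dt$ and reading off a fixed compact support in $t$ from the wandering condition---exactly the mechanism your final paragraph describes, so you have the right idea but should make this step explicit.
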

\begin{proof}
There are four pairings to consider: the $A_c^{G,\alpha}$- and $B_0$-valued inner products on $\F_c(A)$ and
the pairings coming from the left and right actions of $A_c^{G,\alpha}$ and $B_0$ on $\F_c(A)$, respectively.
We do the inner products here and leave the action-pairings to the reader.

For the $B_0$-valued inner product on $\F_c(A)$, let $\xi_i\to \xi$ and $\eta_i\to\eta$ in the inductive limit topology of $\F_c(A)$
and let $f\in \contc(X)$
such that $f\cdot \xi_i=\xi_i$ and $f\cdot \eta_i =\eta_i$ for all $i\in I$.
Then the computations in the proof of Proposition \ref{prop-full} show that
$$\bbraket{\xi_i}{\eta_i}_{B_0}(t)=\bbraket{f\cdot \xi_i}{f\cdot\eta_i}_{B_0}(t)
=\xi_i\cdot \bbraket{f}{f}_X(t)\cdot \alpha_t(\eta_i)$$
which clearly converges to $t\mapsto \xi\cdot \bbraket{f}{f}_X(t)\cdot \alpha_t(\eta)=\bbraket{\xi}{\eta}_{B_0}(t)$
in the inductive limit topology of $B_0$.

For the left inner product $_{A_c^\alpha}\bbraket{\cdot}{\cdot}$ we first observe that if $\xi_i\to \xi$ and $\eta_i\to \eta$
in the inductive limit topology of $\F_c(A)$, then $\xi_i\eta_i^*\to \xi\eta^*$ in the inductive limit topology of $A_c$
and it follows then from Lemma \ref{lem-left-inner-product} (4) that $_{A_c^\alpha}\bbraket{\xi_i}{\eta_i}=\EE(\xi_i\eta_i^*)
\to \EE(\xi\eta^*)={_{A_c^\alpha}\bbraket{\xi}{\eta}}$ in norm. Moreover, if $f\in \contc(X)$ such that
$f\cdot (\xi_i\eta_i^*)\cdot f=\xi_i\eta_i^*$ for all $i\in I$ and if $\psi\in \contc(G\backslash X)$ such that
$\psi\equiv 1$ on $G\cdot \supp f$, then it follows that $\psi\cdot{_{A_c^\alpha}\bbraket{\xi_i}{\eta_i}}\cdot\psi
={_{A_c^\alpha}\bbraket{\xi_i}{\eta_i}}$ for all $i\in I$. Finally, for fixed $g\in \contc(X)$, we have
\begin{align*}
g\cdot {_{A_c^\alpha}\bbraket{\xi_i}{\eta_i}}&
= g\cdot \EE(f\cdot \xi_i\eta_i^*\cdot f)\\
&=\int_G^\st g\cdot \alpha_t(f\cdot \xi_i\eta_i^*\cdot f)\,dt\\
&=\int_G^\st g\cdot \tau_t(f)\cdot \alpha_t(\xi_i\eta_i^*)\cdot \tau_t(f)\,dt.
\end{align*}
If we choose a function $h\in \contc(X)$ such that $h\equiv 1$ on $K\cdot \supp f\cup \supp g$, where
$K=\supp(t\mapsto g\tau_t(f)))\subseteq G$, we see that
$h\cdot(g\cdot {_{A_c^\alpha}\bbraket{\xi_i}{\eta_i}})\cdot h= g\cdot {_{A_c^\alpha}\bbraket{\xi_i}{\eta_i}}$ for all $i\in I$
and it is easy to check from the above formulas that $g\cdot {_{A_c^\alpha}\bbraket{\xi_i}{\eta_i}}\to g\cdot {_{A_c^\alpha}\bbraket{\xi}{\eta}}$ in norm. Hence $g\cdot {_{A_c^\alpha}\bbraket{\xi_i}{\eta_i}}\to g\cdot {_{A_c^\alpha}\bbraket{\xi}{\eta}}$
in the inductive limit topology and a similar argument shows that
$ {_{A_c^\alpha}\bbraket{\xi_i}{\eta_i}}\cdot g\to {_{A_c^\alpha}\bbraket{\xi_i}{\eta_i}}\cdot g$ in the inductive limit topology
of $A_c$.
\end{proof}

\begin{lemma}\label{lem-factor}
Let $\xi=f\cdot a\in \F_c(A)$ be fixed. Then, there exists a net $\varphi_i\in \contc(G,A)$ such that
$\Psi_\pn(f\otimes \varphi_i)=\int_G \Delta(t)^{-1/2}\alpha_t(f\cdot\varphi_i(t^{-1}))\,dt$ converges to $\xi$ in the inductive limit topology of
$\F_c(A)$. In particular, the map $\Psi_\pn:\F(X)\otimes_{\contz(X)\rtimes G}(A\rtimes_{\alpha,\pn}G)\to \F_\pn(A)$ of
Proposition \ref{prop-factor} is surjective.
\end{lemma}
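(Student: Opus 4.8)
The statement to prove is Lemma~\ref{lem-factor}: given $\xi = f\cdot a \in \F_c(A)$, we want a net $\varphi_i \in \contc(G,A)$ with $\Psi_\pn(f\otimes\varphi_i) \to \xi$ in the inductive limit topology of $\F_c(A)$. The natural idea is an \emph{approximate-identity} argument: choose $\varphi_i = \chi_i \otimes$-type elements that concentrate mass near the identity $e\in G$, so that the integral $\int_G \Delta(t)^{-1/2}\alpha_t(f\cdot\varphi_i(t^{-1}))\,dt$ collapses to $f\cdot a$ in the limit.

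Concretely, I would proceed as follows. Pick $g\in\contc(X)$ with $g\equiv 1$ on $\supp f$, so that $g\cdot f = f$. Let $(\chi_i)_{i\in I}$ be a net in $\contc(G)^+$ with $\int_G \chi_i(t)\,dt = 1$ and $\supp\chi_i$ shrinking to $\{e\}$, and set $\varphi_i(t) \defeq \chi_i(t)\Delta(t)^{1/2}\, g\cdot a \in \contc(G,A)$ (the factor $\Delta(t)^{1/2}$ is inserted precisely to cancel the $\Delta(t)^{-1/2}$ in the defining formula for $\Psi_\pn$). Then
\[
\Psi_\pn(f\otimes\varphi_i) = \int_G \Delta(t)^{-1/2}\alpha_t\big(f\cdot \varphi_i(t^{-1})\big)\,dt
= \int_G \chi_i(t^{-1})\Delta(t^{-1})^{1/2}\Delta(t)^{-1/2}\alpha_t\big(f\cdot (g\cdot a)\big)\,dt.
\]
Substituting $t\mapsto t^{-1}$ (which turns $\chi_i(t^{-1})\,dt$ into $\chi_i(t)\Delta(t)^{-1}\,dt$ and the modular factors into $1$) and using $f\cdot g = f$ so $f\cdot(g\cdot a) = f\cdot a = \xi$, one gets $\Psi_\pn(f\otimes\varphi_i) = \int_G \chi_i(t)\Delta(t)^{-1/2}\alpha_{t^{-1}}(\xi)\,dt$. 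One should double-check the bookkeeping of modular factors here — I may need to build the $\Delta$-powers into $\varphi_i$ slightly differently — but in any case a suitable choice of $\varphi_i$ makes $\Psi_\pn(f\otimes\varphi_i)$ equal to a probability-measure average of the points $\alpha_{t^{-1}}(\xi)$ (or $\alpha_t(\xi)$) over $t$ near $e$.

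It then remains to verify convergence in the inductive limit topology of $\F_c(A)$. For the norm part: since $t\mapsto \alpha_t(\xi)$ is norm-continuous and $\supp\chi_i \to \{e\}$, the average converges in norm to $\xi$ by a standard estimate $\|\Psi_\pn(f\otimes\varphi_i) - \xi\| \le \sup_{t\in\supp\chi_i}\|\alpha_t(\xi) - \xi\| \to 0$. For the support part: by properness of the $G$-action on $X$, if $\supp\chi_i \subseteq C$ for a fixed compact $C\subseteq G$ (which holds for $i$ large), then for $t\in C$ we have $\alpha_t(f\cdot a) = \tau_t(f)\cdot \alpha_t(a)$ with $\supp\tau_t(f) = t\cdot\supp f \subseteq C\cdot\supp f$, which is relatively compact in $X$. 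Hence picking $h\in\contc(X)$ with $h\equiv 1$ on $\overline{C\cdot\supp f}$ gives $h\cdot \Psi_\pn(f\otimes\varphi_i) = \Psi_\pn(f\otimes\varphi_i)$ for all large $i$, which is exactly the support condition in Definition~\ref{def-ind-limit}(1). Surjectivity of $\Psi_\pn$ onto a dense subspace in inductive-limit topology, hence onto all of $\F_\pn(A)$ after completing, then follows since $\F_c(A) = \contc(X)\cdot A$ is spanned by such $\xi = f\cdot a$.

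**Main obstacle.** The only genuinely delicate point is the bookkeeping of the modular-function factors $\Delta(t)^{\pm 1/2}$ under the substitution $t\mapsto t^{-1}$ in the defining integral \eqref{eq-decom}, and making sure the $\varphi_i$ are chosen so that these factors combine to leave a genuine \emph{probability average} (rather than merely an approximate-identity convolution with a nontrivial density) — this is what makes the elementary continuity estimate work without extra error terms. Everything else is a routine approximate-identity argument combined with the properness/wandering condition to control supports. An alternative, perhaps cleaner, route avoiding the modular-function computation altogether: observe that $\Psi_\pn$ is a $\contc(G,A)$-module map and that $\F(X)\otimes_{\contz(X)\rtimes G}(A\rtimes_{\alpha,\pn}G)$ is spanned by $f\otimes(\iota_A(a)\varphi)$ for $\varphi\in\contc(G,A)$ (since $\iota_A(A)\cdot\contc(G,A)$ is inductive-limit dense in $\contc(G,A)$, using that $\contz(X)\rtimes G$ acts nondegenerately); then by the identity $\Psi_\pn(f\otimes(\iota_A(a)\varphi)) = (f\cdot a)\cdot\varphi$ (a short computation using the formulas for $\iota_A$ in the proof of Proposition~\ref{prop-full}) and an approximate identity $\varphi_i$ for $A\rtimes_{\alpha,\pn}G$ with supports shrinking to $e$, one gets $\Psi_\pn(f\otimes(\iota_A(a)\varphi_i)) = \xi\cdot\varphi_i \to \xi$ in inductive-limit topology directly. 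I would write up whichever of these two presentations keeps the modular factors most transparent.
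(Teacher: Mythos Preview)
Your approach is correct and essentially identical to the paper's: the paper also uses an approximate-identity net $\tilde\varphi_U\in\contc(G)^+$ with symmetric supports shrinking to $\{e\}$ and $\int_G\tilde\varphi_U=1$, setting $\varphi_U(t)=\Delta(t)^{1/2}\alpha_{t^{-1}}(a)\tilde\varphi_U(t)$ (so your $g\cdot a$ is replaced by $\alpha_{t^{-1}}(a)$, and your auxiliary function $g$ is unnecessary since $f\cdot a$ already absorbs it). Your suspected modular-factor slip is real but harmless: with your choice $\varphi_i(t)=\chi_i(t)\Delta(t)^{1/2}\,a$ the substitution actually gives $\Psi_\pn(f\otimes\varphi_i)=\int_G\chi_i(t)\,\alpha_{t^{-1}}(\xi)\,dt$ with no residual $\Delta(t)^{-1/2}$, so it is exactly a probability average and the norm and support estimates go through as you outlined.
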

\begin{proof} Let $\mathcal U$ be a neighborhood base of the identity $e\in G$ consisting of symmetric compact neighborhoods and
let $\tilde\varphi_U\in \contc(G)^+$ with $\supp \tilde\varphi_U\subseteq U$, $\tilde\varphi(t)=\tilde\varphi(t^{-1})$ for all $t\in G$
and $\int_G\tilde\varphi_U(t)\,dt=1$.
Let $\varphi_U\in \contc(G,A)$ be defined as $\varphi_U(t)=\Delta(t)^{1/2}\alpha_{t^{-1}}(a)\tilde\varphi(t)$.
It is then easy to check that $(\varphi_U)_{U\in \mathcal U}$ does the job.

Since, by Lemma \ref{lim-inductive-lim-top}, the inductive limit topology is stronger than the
norm topology on $\F_c(A)\subseteq \F_\pn(A)$, it follows that $\Psi_\pn$ has dense image in $\F_\pn(A)$.
Since it is isometric, it is surjective.
\end{proof}

\section{Representations of generalized fixed-point algebras}\label{sec-reps}

Assume that $A$ is a weakly proper $X\rtimes G$-algebra. In this section we describe the representations of
generalized fixed-point algebras $A_\pn^{G,\alpha}$ -- where $\|\cdot\|_\pn$ is some fixed crossed-product norm -- via the
induction process given by the partial $A_\pn^{G,\alpha}-A\rtimes_{\alpha,\pn}G$ imprimitivity bimodule $\F_\pn(A)$.

Recall that the (nondegenerate) representations of the maximal crossed product $A\rtimes_{\alpha}G$ are
the integrated forms $\pi\rtimes U$ of covariant representations $(\pi,U)$ of the system $(A,G,\alpha)$.
Throughout, we assume all representations to be nondegenerate and also allow representations on Hilbert modules (not only on Hilbert spaces).

Let $\|\cdot\|_\pn$ be a crossed-product norm on $\contc(G,A)$ and let $J_\pn$ be the kernel of the surjection $q_\pn\colon A\rtimes_\un G\to A\rtimes_\pn G$.
Then, representations of $A\rtimes_\pn G=(A\rtimes_\un G)/J_\pn$ correspond to representations of $A\rtimes_\un G$ vanishing on $J_\pn$,
\ie, covariant representations $(\pi,U)$ with $J_\pn\sbe \ker(\pi\rtimes U)$. Such a representation $(\pi,U)$ will be called a \emph{$\pn$-covariant representation}, and the corresponding representation of $A\rtimes_\pn G$ will be denoted by $\pi\rtimes_\pn U$.

Since $\F_\pn(A)$ is an imprimitivity $A^{G,\alpha}_\pn-\I_{A,\pn}$-bimodule, where $\I_{A,\pn}$ is the ideal $\cspn\{\bbraket{\xi}{\eta}_{A\rtimes_\pn G}: \xi,\eta\in \F_\pn(A)\}$ of $A\rtimes_{\pn}G$, the representations of $A_\pn^{G,\alpha}$ are (up to unitary equivalence) the representations which are induced via $\F_\pn(A)$ from the nondegenerate representations of $\I_{A,\pn}$. Since every nondegenerate representation of an ideal uniquely extends to a representation of the full algebra,
we may describe the representations of $A_\pn^{G,\alpha}$ as those representations which are
induced from representations of $A\rtimes_{\pn}G$ which restrict to nondegenerate representations
of $\I_{A,\pn}$.

For a given $\pn$-covariant representation $(\pi,U)$ of $(A,G,\alpha)$ on a Hilbert $B$-module $\Y$,
recall that the representation of $A^{G,\alpha}_\pn$ induced from $\pi\rtimes_\pn U$ via $\F_\pn(A)$ acts on the
Hilbert $B$-module $\Y_{\Ind (\pi\rtimes U)}:=\F_\pn(A)\otimes_{A\rtimes_{\pn}G}\Y$ via the
action of $A^{G,\alpha}_\pn$ on $\F_\pn(A)$. The following result gives an explicit description of this representation.

\begin{proposition}\label{prop-induced}
Let $(A,\alpha,\phi)$ be a weakly proper $X\rtimes G$-algebra, let $\|\cdot\|_\pn$ be a crossed-product norm on $\contc(G,A)$, and let $(\pi,U)$ be a $\pn$-covariant representation
of $(A,G,\alpha)$ on a Hilbert $B$-module $\Y$. Let $\H_0:=\tilde\pi(\contc(X))\Y$, where $\tilde\pi\defeq \pi\circ\phi$, equipped with the \textup(semi-positive definite\textup) $B$-valued inner product
\begin{equation}\label{eq-integral}
\braket{\xi}{\eta}_0:= \int_G \Delta(t)^{-1/2}\braket{\xi}{U_t\eta}_{B}\,dt
\end{equation}
and consider the left action of $A_c^{G,\alpha}$ on $\H_0$ given by
\begin{equation}\label{eq-left-action}
m\cdot \xi=\pi(m)\xi,
\end{equation}
where we extend $\pi$ to $\M(A)$ and use the fact that $A_c^{G,\alpha}\subseteq \M(A)$.
Then this action extends to a representation $\Ind^{G,\alpha}_\pn(\pi\rtimes U)$ of $A_\pn^{G,\alpha}$ on the Hausdorff completion
$\H=\Ind^{G,\alpha}_\pn(\Y)$ of $\H_0$ which is unitarily equivalent to the induced representation $\Ind^{\F_\pn(A)}(\pi\rtimes U)$
of $A_\pn^{G,\alpha}$ on $\Y_{\Ind(\pi\rtimes_\pn U)}$ via $\F_\pn(A)$.

In particular, if $\pi\rtimes_\pn U$ is faithful \textup(\ie, if $\ker(\pi\rtimes U)=J_\pn$\textup),
then $\Ind^{G,\alpha}_\pn$ is a faithful representation of $A^{G,\alpha}_\pn$.
\end{proposition}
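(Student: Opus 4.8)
The plan is to realize the asserted unitary equivalence explicitly, via a map built out of $\pi$ from the algebraic tensor product $\F_c(A)\odot\Y$ onto $\H_0$. Since $\F_c(A)=\contc(X)\cdot A\subseteq A$, I would set
$$W_0\colon\F_c(A)\odot\Y\longrightarrow\H_0,\qquad W_0(\xi\otimes y)=\pi(\xi)y .$$
Writing $\xi=f\cdot a$ with $f\in\contc(X)$, $a\in A$, we have $\pi(\xi)y=\tilde\pi(f)\pi(a)y$, so $W_0$ does take values in $\H_0=\tilde\pi(\contc(X))\Y$; and because $\pi$ is nondegenerate, $\pi(A)\Y=\Y$, whence $W_0(\F_c(A)\odot\Y)=\tilde\pi(\contc(X))\pi(A)\Y=\tilde\pi(\contc(X))\Y=\H_0$. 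Thus $W_0$ maps \emph{onto} $\H_0$; this surjectivity is exactly what will make the positivity of the form (\ref{eq-integral}) come for free.

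The key computation is the identity
$$\braket{W_0(\xi\otimes y)}{W_0(\eta\otimes z)}_0=\Braket{y}{(\pi\rtimes_\pn U)(\bbraket{\xi}{\eta}_{B_0})z}\qquad(\xi,\eta\in\F_c(A),\ y,z\in\Y).$$
To prove it I would expand the left-hand side using (\ref{eq-integral}), use the covariance relation $U_t\pi(\eta)=\pi(\alpha_t(\eta))U_t$ to move $U_t$ across $\pi(\eta)$, recognize $\Delta(t)^{-1/2}\xi^*\alpha_t(\eta)=\bbraket{\xi}{\eta}_{B_0}(t)$ from (\ref{inner-prod}), and read the resulting integral as the value of the integrated form on $\bbraket{\xi}{\eta}_{B_0}$. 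Convergence of the integral in (\ref{eq-integral}) is not an issue: on elementary tensors the integrand is supported in the compact set $C(\supp f,\supp g)$ furnished by the wandering condition, and the general case follows by sesquilinearity. Since $\bbraket{\xi}{\eta}_{B_0}\in\contc(G,A)=B_0$ and $(\pi,U)$ is $\pn$-covariant, its integrated form descends to $A\rtimes_{\alpha,\pn}G$ — which is precisely why $\pi\rtimes_\pn U$ (rather than $\pi\rtimes_\un U$) appears on the right. The right-hand side of the identity is, by definition, the $B$-valued semi-inner product defining the interior tensor product $\F_\pn(A)\otimes_{A\rtimes_{\alpha,\pn}G}\Y=\Y_{\Ind(\pi\rtimes_\pn U)}$ on elementary tensors, so it is positive semidefinite. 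Transporting this through the \emph{surjection} $W_0$ shows that (\ref{eq-integral}) is positive semidefinite on all of $\H_0$ and that $W_0$ carries the null space of the tensor-product form onto the null space of (\ref{eq-integral}). Hence $W_0$ descends to an isometry $W\colon\Y_{\Ind(\pi\rtimes_\pn U)}\to\H$ with range $\H_0$ dense in the Hausdorff completion $\H$, i.e.\ a unitary of Hilbert $B$-modules.

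It remains to check that $W$ intertwines the two left $A_\pn^{G,\alpha}$-actions. An element $m\in A_c^{G,\alpha}$ acts on $\F_\pn(A)\otimes_{A\rtimes_{\alpha,\pn}G}\Y$ by $\xi\otimes y\mapsto(m\xi)\otimes y$, this being the action of $\Psi_\pn(m)\in\K(\F_\pn(A))=A_\pn^{G,\alpha}$ given by (\ref{left-action}); here $m\xi\in\F_c(A)$, since $m\cdot f\in A_c$ for $f\in\contc(X)$ as in the proof of Lemma~\ref{lem-left-inner-product}. Extending $\pi$ to $\M(A)$ we get $W_0((m\xi)\otimes y)=\pi(m\xi)y=\pi(m)\pi(\xi)y=m\cdot W_0(\xi\otimes y)$ with the action (\ref{eq-left-action}). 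Thus $W$ conjugates $\Ind^{\F_\pn(A)}(\pi\rtimes U)$, a representation of $A_\pn^{G,\alpha}=\K(\F_\pn(A))$, to a representation of $A_\pn^{G,\alpha}$ on $\H$ extending the formula (\ref{eq-left-action}); this transported representation is by definition $\Ind^{G,\alpha}_\pn(\pi\rtimes U)$, and the unitary equivalence is proved. For the last assertion: if $\pi\rtimes_\pn U$ is faithful on $A\rtimes_{\alpha,\pn}G$, then its restriction to the ideal $\I_{A,\pn}=\cspn\{\bbraket{\xi}{\eta}_{A\rtimes_\pn G}:\xi,\eta\in\F_\pn(A)\}$ is faithful, and since $\F_\pn(A)$ is an $A_\pn^{G,\alpha}$–$\I_{A,\pn}$ imprimitivity bimodule — and inducing through $\F_\pn(A)$ depends only on the restriction to $\I_{A,\pn}$ — the induced representation $\Ind^{\F_\pn(A)}(\pi\rtimes U)\cong\Ind^{G,\alpha}_\pn(\pi\rtimes U)$ of $A_\pn^{G,\alpha}$ is faithful.

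The main obstacle is not any single computation but getting the logical order right: the sesquilinear form (\ref{eq-integral}) on $\H_0$ is not visibly positive, nor even obviously well defined, in isolation, and the strategy is to \emph{derive} both properties after the fact by identifying $\H_0$, through the surjection $W_0$, with the interior tensor product $\F_\pn(A)\otimes_{A\rtimes_{\alpha,\pn}G}\Y$ — all the while keeping track that the integrated forms in play factor through the $\pn$-crossed product. Once this framework is in place, the covariance computation, the intertwining identity, and the faithfulness statement are all routine.
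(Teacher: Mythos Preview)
Your proof is correct and in fact more direct than the paper's. The paper does not build the unitary from $\F_c(A)\odot\Y$ via $\pi$; instead it first invokes the factorization $\F_\pn(A)\cong\F(X)\otimes_{\contz(X)\rtimes G}(A\rtimes_{\alpha,\pn}G)$ of Proposition~\ref{prop-factor} to rewrite $\Y_{\Ind(\pi\rtimes_\pn U)}$ as $\F(X)\otimes_{\contz(X)\rtimes G}\Y$, and only then constructs the map $V\colon f\otimes y\mapsto\tilde\pi(f)y$ from the \emph{commutative} module $\F_c(X)\odot\Y$ onto $\H_0$. The intertwining check in the paper is correspondingly more elaborate, since one must chase through the composite $V\circ\Theta\circ\Phi$. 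Your approach bypasses Proposition~\ref{prop-factor} entirely: the single identity $\braket{\pi(\xi)y}{\pi(\eta)z}_0=\braket{y}{(\pi\rtimes_\pn U)(\bbraket{\xi}{\eta}_{B_0})z}$ does all the work, and the intertwining $W_0((m\xi)\otimes y)=\pi(m)W_0(\xi\otimes y)$ is immediate from multiplicativity of $\pi$. The paper's route has the virtue of exhibiting the reduction to the commutative bimodule $\F(X)$, which is conceptually pleasant and reuses machinery already set up; yours has the virtue of being self-contained and shorter.
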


\begin{proof}
First note that the integral in (\ref{eq-integral}) exists: since $\xi,\eta\in \pi(\contc(X))\Y$, we may choose
$f,g\in \contc(X)$ such that $\xi=\tilde\pi(f)\xi$ and $\eta=\tilde\pi(g)\eta$. Then
\begin{align*}
\int_G \Delta(t)^{-1/2}\braket{\xi}{U_t\eta}_{B}\,dt&=
\int_G \Delta(t)^{-1/2}\braket{\tilde\pi(f)\xi}{U_t\tilde\pi(g)\eta}_{B}\,dt\\
&=\int_G \Delta(t)^{-1/2}\braket{\xi}{\tilde\pi(\bar{f}\tau_t(g))U_t\eta}_{B}\,dt,
\end{align*}
hence we integrate a continuous function with compact support. We should also remark
that $m\cdot \xi\in \H_0$ for all $m\in A_c^{G,\alpha}$ and $\xi\in \H_0$. For this let $f\in \contc(X)$ such that
$\xi=\tilde\pi(f)\xi$ and choose $g\in \contc(X)$ such that $g\cdot m\cdot f=m\cdot f$ (such $g$ exists since
$m\cdot f\in A_c$). Then $m\cdot \xi=\tilde\pi(g)\pi(m\cdot f)\xi\in \H_0$.

In a next step, we use the decomposition $\F_\pn(A)=\F(X)\otimes_{\contz(X)\rtimes G}(A\rtimes_{\pn}G)$ of Proposition \ref{prop-factor}
to see that
\begin{equation}\label{eq-iso}
\begin{split}
\Y_{\Ind (\pi\rtimes_\pn U)}&=\F_\pn(A)\otimes_{A\rtimes_{\pn}G}\Y\\
&\stackrel{\Phi}{\cong}\big(\F(X)\otimes_{\contz(X)\rtimes G}(A\rtimes_{\pn}G)\big)\otimes_{A\rtimes_{\pn}G}\Y\\
&= \F(X)\otimes_{\contz(X)\rtimes G}\big((A\rtimes_{\pn}G)\otimes_{A\rtimes_{\pn}G}\Y\big)\\
&\stackrel{\Theta}{\cong} \F(X)\otimes_{\contz(X)\rtimes G}\Y
\end{split}
\end{equation}
Note that we denoted the first isomorphism in the above computation by $\Phi$ and the last one by $\Theta$.
For bookkeeping, the inverse
$$\Phi^{-1}:\F(X)\otimes_{\contz(X)\rtimes G}(A\rtimes_{\pn}G)\otimes_{A\rtimes_{\pn}G}\Y
\to \F_\pn(A)\otimes_{A\rtimes_{\pn}G}\Y$$
of the isomorphism $\Phi$ sends a triple elementary tensor
$$f\otimes \varphi\otimes \xi\in \contc(X)\otimes \contc(G,A)\otimes \Y$$
to the elementary tensor
$$\Psi_\pn(f\otimes \varphi)\otimes \xi\in \F_c(A)\otimes \Y$$
 with $\Psi_\pn(f\otimes \varphi)=\int_G\Delta(t)^{-1/2}\alpha_t(f\cdot \varphi(t^{-1}))\,dt$, and we have
$$\Theta(f\otimes \varphi\otimes \xi)=f\otimes \big(\pi\rtimes_\pn U(\varphi)\xi\big) \in \contc(X)\otimes \Y.$$

To proceed, we now consider the surjective linear map $V: \F_c(X)\odot \Y{\to} \H_0$
given on elementary tensors by $V(f\otimes \xi)=\tilde\pi(f)\xi$.
A short computation yields
\begin{align*}
\braket{f\otimes\xi}{g\otimes\eta}&=\braket{ \xi}{ \pi\rtimes_\pn U(\bbraket{f}{g}_X)\eta}\\
&= \Braket{\xi}{\int_G \Delta(t)^{-1/2}\tilde\pi(\bar{f}\tau_t(g))U_t\eta\, dt}\\
&=\int_G \Delta(t)^{-1/2}\braket{\tilde\pi(f)\xi}{U_t\tilde\pi(g)\eta}\,dt\\
&=\braket{\tilde\pi(f)\xi}{\tilde\pi(g)\eta}_0=\braket{V(f\otimes\xi)}{V(g\otimes\eta)}_0.
\end{align*}
It follows that $\braket{\cdot}{\cdot}_0$ is a positive semi-definite $B$-inner product
on $\H_0$ such that $V$ extends to a Hilbert $B$-module isomorphism between
$\F(X)\otimes_{\contz(X)\rtimes G}\Y$ and the Hausdorff completion of $\H_0$
with respect to this $B$-inner product. Thus, putting things together, we obtain an isomorphism of Hilbert $B$-modules
$$V\circ \Theta\circ \Phi: \Y_{\Ind(\pi\rtimes_\pn U)}\to \H.$$

We now have to check that this isomorphism intertwines the induced representation
$\Ind^{\F(A)}(\pi\rtimes_\pn U)$ with the representation determined by the left action of $A_c^{G,\alpha}$ on $\H_0$ as
in~\eqref{eq-left-action}. For this let $m\in A_c^{G,\alpha}$ and let
$ f\otimes \varphi\otimes \xi$ be a triple tensor in
$ \contc(X)\otimes \contc(G,A)\otimes \Y\subseteq
\F(X)\otimes_{\contz(X)\rtimes G}(A\rtimes_{\pn}G)\otimes_{A\rtimes_{\pn}G}\Y$.
We then get
$$\Ind^{\F(A)}(\pi\rtimes_\pn U)(m)\big(\Phi^{-1}(f\otimes\varphi\otimes \xi)\big)=m\cdot \Psi_\pn(f\otimes\varphi)\otimes \xi.$$
Choose a function $g\in \contc(X)$ such that $g\cdot m\cdot f=m\cdot f$, which exists since $m\cdot f\in A_c$.
Using the $G$\nb-invariance of $m$, we get
\begin{align*}
m\cdot \Psi_\pn(f\otimes\varphi)\otimes\xi&=\int_G\Delta(t)^{-1/2} \alpha_t(m\cdot f\cdot \varphi(t^{-1}))\,dt\otimes \xi\\
&=\int_G\Delta(t)^{-1/2} \alpha_t(g\cdot m\cdot f\cdot \varphi(t^{-1}))\,dt\otimes \xi\\
&=\Psi_\pn\big(g\otimes \iota_A^\pn(m\cdot f)\varphi\big)\otimes \xi\\
&=\Phi^{-1}\big(g\otimes \iota_A^\pn(m\cdot f)\varphi\otimes \xi\big).
\end{align*}
where $\iota_A^\pn:A\to \M(A\rtimes_{\pn}G)$ denotes the canonical inclusion.
The isomorphism $V\circ \Theta\circ \Phi: \Y_{\Ind(\pi\rtimes_\pn U)}\to \H$ sends this element
to
\begin{align*}
V\circ \Theta(g\otimes \iota_A^\pn(m\cdot f)\varphi\otimes \xi)&=
V(g\otimes\pi\rtimes_\pn U(\iota_A^\pn(m\cdot f)\varphi)\xi)\\
&=V(g\otimes \pi(m\cdot f)\pi\rtimes_\pn U(\varphi)\xi)\\
&=\pi(g\cdot m\cdot f)\pi\rtimes_\pn U(\varphi)\xi\\
&=\pi(m\cdot f)\pi\rtimes_\pn U(\varphi)\xi\\
&=\pi(m)\big(\tilde\pi(f)\pi\rtimes_\pn U(\varphi)\xi\big)\\
&=m\cdot \left(V\circ \Theta(f\otimes \varphi\otimes \xi)\right).
\end{align*}
This completes the proof of the first assertion in the proposition. The final assertion follows from the fact that the induction process via imprimitivity bimodules preserves faithfulness.
\end{proof}

Next, we are going to describe what happens if we induce to generalized fixed-point algebras
the covariant regular representation $\Lambda_A$ of $(A,\alpha)$ on the Hilbert $A$-module $L^2(G,A)$. Recall that $\Lambda_A$ is the integrated form of the covariant representation $(\tilde\alpha,1\otimes\lambda)$ on $L^2(G,A)\cong A\otimes L^2G$, where $\tilde\alpha(a)\xi|_t\defeq \alpha_{t^{-1}}(a)\xi(t)$ for all $a\in A$, $t\in G$ and $\xi\in \contc(G,A)\sbe L^2(G,A)$, and $\lambda\colon G\to \U(L^2G)$ is the (left) regular representation of $G$. Recall that $\ker(\Lambda_A)=J_\red\supseteq J_\pn$ so that $(\tilde\alpha,1\otimes\lambda)$ is a $\pn$-representation for every crossed-product norm $\|\cdot\|_\pn$ on $\contc(G,A)$. Hence $\Lambda_A$ factors through a representation $\Lambda_A^\pn$ of $A\rtimes_\pn G$ on $L^2(G,A)$ for every $\pn$ and $\Lambda_A^\pn$ is faithful \iff $\pn=\red$. Of course, if $\Lambda_A$ is faithful, then $\Lambda_A^\pn$ is faithful for every $\pn$. The following result describes the induced representation $\Ind_\pn^{G,\alpha}(\Lambda_A)$ of $A^{G,\alpha}_\pn$.

\begin{proposition}\label{prop:RegRepInduced}
With notations as above, there is an isomorphism
$$\Psi\colon \Ind^{G,\alpha}_\pn\big(L^2(G,A)\big)\congto A$$
of Hilbert $A$-modules which sends a function $\zeta\in\contc(X)\cdot\contc(G,A)\sbe \Ind^{G,\alpha}_\pn\big(L^2(G,A)\big)$ to $\Psi(\zeta)\defeq \int_G\Delta(t)^{-1/2}\zeta(t)dt\in A$.

Via this isomorphism, the representation $\Ind_\pn^{G,\alpha}(\Lambda_A)$ of $A^{G,\alpha}_\pn$ into $\M(A)=\Lb_A(A)$ is given by the extension  to $A_\pn^{G,\alpha}$ of the canonical inclusion map $A^{G,\alpha}_c\into \M(A)$. In particular, $A^{G,\alpha}_\red$ may be identified with the closure of $A_c^{G,\alpha}$ in $\M(A)$.
\end{proposition}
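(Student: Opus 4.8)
The plan is to exhibit the isomorphism $\Psi$ explicitly by first identifying $\Ind^{G,\alpha}_\pn(L^2(G,A))$ with $\F(X)\otimes_{\contz(X)\rtimes G}L^2(G,A)$ using the intermediate computation $\H\cong\F(X)\otimes_{\contz(X)\rtimes G}\Y$ appearing in the proof of Proposition~\ref{prop-induced} (applied to $\Y=L^2(G,A)$, $\pi=\tilde\alpha$, $U=1\otimes\lambda$), and then to recognize that the $\contz(X)\rtimes_\tau G$-module $L^2(G,A)$ arising from the representation $\phi\rtimes G$ composed with $\Lambda_A$ is exactly the one used in the classical imprimitivity theorem for the proper (possibly non-free) $G$-space $X$. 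Concretely, one has $\F(X)\otimes_{\contz(X)\rtimes G}L^2(G,A)\cong L^2(G\backslash X)\otimes\cdots$, but the cleaner route is direct: define $\Psi$ on the dense subspace $\contc(X)\cdot\contc(G,A)$ of $\H_0$ by $\Psi(\zeta)=\int_G\Delta(t)^{-1/2}\zeta(t)\,dt$ and check it is isometric for the inner product~\eqref{eq-integral}, then check it intertwines the left action~\eqref{eq-left-action} with the canonical inclusion $A_c^{G,\alpha}\hookrightarrow\M(A)$ acting on $A$ by left multiplication.

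First I would verify the isometry property. For $\zeta=f\cdot\varphi$ with $f\in\contc(X)$, $\varphi\in\contc(G,A)$, one has $\zeta(t)=f\cdot\varphi(t)$ and, unwinding the definition of $\H_0=\tilde\pi(\contc(X))L^2(G,A)$ with $\tilde\pi=\tilde\alpha\circ\phi$, the vector represented by $\zeta$ in $\H_0$ is $\tilde\pi(f)\eta$ for the appropriate $\eta\in L^2(G,A)$; the point is that under $V\colon\F_c(X)\odot L^2(G,A)\to\H_0$ from the proof of Proposition~\ref{prop-induced}, $f\otimes\eta\mapsto\tilde\pi(f)\eta$, and the inner product~\eqref{eq-integral} pulls back to the $\contz(X)\rtimes G$-module inner product $\braket{\eta}{\Lambda_A^\pn(\bbraket{f}{g}_X)\eta'}$. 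So it suffices to compute, using the explicit formula $\braket{\eta_1}{\eta_2}_{L^2(G,A)}=\int_G\eta_1(s)^*\eta_2(s)\,ds$ and the definition of $\tilde\alpha$ and $\lambda$, that $\braket{\zeta}{\zeta}_0=\big(\int_G\Delta(t)^{-1/2}\zeta(t)\,dt\big)^*\big(\int_G\Delta(s)^{-1/2}\zeta(s)\,ds\big)$, which is a change-of-variables bookkeeping exercise entirely parallel to the isometry computation in the proof of Proposition~\ref{prop-factor}. Surjectivity onto $A$ is then automatic because $\{\int_G\Delta(t)^{-1/2}(f\cdot\varphi)(t)\,dt: f\in\contc(X),\varphi\in\contc(G,A)\}$ is norm-dense in $A$ (it contains $\phi(\contc(X))\contc(G,A)(e)$-type elements, and $\phi$ is nondegenerate).

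Next I would check the intertwining. For $m\in A_c^{G,\alpha}$ and $\zeta=f\cdot\varphi\in\H_0$, the left action~\eqref{eq-left-action} gives $m\cdot\zeta=\tilde\pi(g)\tilde\alpha(m\cdot f)(\text{rest})$ after choosing $g$ with $g\cdot m\cdot f=m\cdot f$ exactly as in the proof of Proposition~\ref{prop-induced}; applying $\Psi$ and using $G$-invariance of $m$ (so that $\int_G\Delta(t)^{-1/2}\alpha_t(m\cdot f\cdot\varphi(t^{-1}))\,dt=m\cdot\int_G\Delta(t)^{-1/2}\alpha_t(f\cdot\varphi(t^{-1}))\,dt=m\cdot\Psi(\zeta)$ as a product in $\M(A)$) yields $\Psi(m\cdot\zeta)=m\,\Psi(\zeta)$. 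Thus $\Psi$ conjugates $\Ind_\pn^{G,\alpha}(\Lambda_A)$ into left multiplication by $A_c^{G,\alpha}$ on $A$, i.e.\ the canonical inclusion $A_c^{G,\alpha}\hookrightarrow\M(A)=\Lb_A(A)$, which extends to all of $A_\pn^{G,\alpha}=\K(\F_\pn(A))$ by continuity. The last sentence of the statement then follows by taking $\pn=\red$: since $\Lambda_A^\red$ is faithful, Proposition~\ref{prop-induced} gives that $\Ind^{G,\alpha}_\red(\Lambda_A)$ is faithful, so $A_\red^{G,\alpha}$ is isometrically the closure of $A_c^{G,\alpha}$ inside $\M(A)$.

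**The main obstacle.** The genuinely delicate point is justifying that the semi-definite inner product~\eqref{eq-integral} on $\H_0$, pulled back through $V$ and the decomposition of Proposition~\ref{prop-factor}, really is the $\contz(X)\rtimes_\tau G$-valued inner product coming from the \emph{non-free} imprimitivity theory $\F(X)$ with its possibly-non-full $\contz(X)\rtimes G$-valued inner product — in other words, controlling the passage to the Hausdorff completion and making sure no positivity or density is lost when $G$ does not act freely. This is handled by invoking the identifications already established in the proof of Proposition~\ref{prop-induced} rather than redoing them, but one must be careful that the explicit formula $\zeta\mapsto\int_G\Delta(t)^{-1/2}\zeta(t)\,dt$ is well-defined on the completion, i.e.\ that it kills precisely the null vectors of $\braket{\cdot}{\cdot}_0$; this is exactly what the isometry computation establishes, so the obstacle dissolves once that computation is done carefully.
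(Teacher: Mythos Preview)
Your approach is essentially the paper's: define $\Psi$ on the dense subspace $\H_c=\contc(X)\cdot\contc(G,A)$, verify directly that it preserves the inner product~\eqref{eq-integral}, check dense range, and then check the intertwining using $G$\nb-invariance of $m$; the reduced case follows from faithfulness via Proposition~\ref{prop-induced}.

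Two formula slips are worth correcting, however. First, for $\zeta=\tilde\pi(f)\varphi$ one has $\zeta(t)=\alpha_{t^{-1}}(\phi(f))\varphi(t)$, not $\phi(f)\varphi(t)$, because $\tilde\pi=\tilde\alpha\circ\phi$ and $\tilde\alpha(a)\xi|_t=\alpha_{t^{-1}}(a)\xi(t)$; this is what makes the change of variables in the isometry computation work (the paper's computation is $\braket{\eta}{\zeta}_0=\int_G\int_G\Delta(t)^{-1/2}\eta(s)^*\zeta(t^{-1}s)\,dt\,ds$, then $t\mapsto st^{-1}$). Second, in your intertwining paragraph you wrote $\Psi(\zeta)=\int_G\Delta(t)^{-1/2}\alpha_t(f\cdot\varphi(t^{-1}))\,dt$, which is the formula for $\Psi_\pn$ from Proposition~\ref{prop-factor}, not the $\Psi$ of this proposition; the correct (and simpler) argument is that $(m\cdot\zeta)(t)=\alpha_{t^{-1}}(m)\zeta(t)=m\,\zeta(t)$ by $G$\nb-invariance, whence $\Psi(m\cdot\zeta)=m\,\Psi(\zeta)$ immediately. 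Your surjectivity sketch is also too vague; the paper uses an explicit approximate-identity argument with $h\in\contc(G)^+$ supported near $e$ to approximate $\phi(f)a$ by $\Psi(f\cdot\xi)$ with $\xi(t)=h(t)a$.
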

\begin{proof}
First notice that $\H_c\defeq \contc(X)\cdot\contc(G,A)$ is the (dense) subspace
$$\{f\cdot \xi\defeq \tilde\alpha(\phi(f))\cdot \xi: f\in \contc(X),\xi\in \contc(G,A)\}$$
of $\H=\Ind^{G,\alpha}_\pn\big(L^2(G,A)\big)$. For $f\in \contc(X)$ and $\xi\in \contc(G,A)$, the function $\zeta=f\cdot \xi$ belongs to $\contc(G,A)$ and is given by $\zeta(t)=f\cdot\xi(t)=\alpha_{t^{-1}}(\phi(f))\xi(t)$, so the integral $\psi(\zeta)=\int_G\Delta(t)^{-1/2}\zeta(t)dt$ makes sense and yields an element in $A$.
To prove the first assertion of the proposition, it is enough to check that $\Psi$ preserves $A$-inner products and has dense range.
Now, if $\eta,\zeta\in \H_c$, then
\begin{align*}
\braket{\eta}{\zeta}_0&=\int_G\Delta(t)^{-1/2}\braket{\eta}{(1\otimes\lambda_t)\zeta}_A dt\\
 &= \int_G\int_G\Delta(t)^{-1/2}\eta(s)^*\zeta(t^{-1}s) dt ds \\
 &\stackrel{t\mapsto st^{-1}}{=}\int_G\int_G\Delta(st)^{-1/2}\eta(s)^*\zeta(t) dt ds \\
 &= \left(\int_G\Delta(s)^{-1/2}\eta(s)dt\right)^*\left( \int_G \Delta(t)^{-1/2} \zeta(t) ds \right)\\
 &=\braket{\Psi(\eta)}{\Psi(\zeta)}.
\end{align*}
Therefore, $\Psi$ preserves the $A$-valued inner products. To see that $\Psi$ has dense range, it is enough to approximate elements in $f\cdot a\in \contc(X)\cdot A$ by elements in $\im(\Psi)$ in the norm of $A$. For this let $\epsilon>0$ and take a compact neighborhood $W$ of $e\in G$ with $\|\alpha_{t^{-1}}(\phi(f))a-\phi(f)a\|\leq\epsilon$ for all $t\in W$. Now, take $h\in\contc(G)^+$ with $\supp(h)\sbe W$ and $\int_G\Delta(t)^{-1/2}h(t) dt=1$. Then, for $\xi(t)\defeq h(t)a$, we have
\begin{align*}
\|\Psi(f\cdot \xi)-f\cdot a\|&=\left\|\int_W\Delta(t)^{-1/2}\left(\alpha_{t^{-1}}(\phi(f))h(t)a -\phi(f)h(t)a\right)dt\right\|\\
 &\leq \int_W\Delta(t)^{-1/2}\|\alpha_{t^{-1}}(\phi(f))a-\phi(f)a\|h(t) dt \leq \epsilon.
\end{align*}
This finishes the proof of the first assertion in the proposition. For the second, we have to check that $\Psi(m\cdot\zeta)=m\Psi(\zeta)$ for all $m\in A^{G,\alpha}_c$ and $\zeta\in \H_c$.
But this follows from an easy computation using the fact that $m$ is $G$\nb-invariant. The final assertion follows from Proposition~\ref{prop-induced} and the fact that $\Lambda_A$ factors through a faithful representation of $A\rtimes_\red G$, so that $\Ind^{G,\alpha}_\red$ is a faithful representation of $A^{G,\alpha}_\red$. Since this representation extends the inclusion map $A^{G,\alpha}_c\into \M(A)$, the final assertion follows.
\end{proof}

\begin{remark} As a consequence of the above proposition we obtain the explicit description
of the reduced generalized fixed-point algebra $A_r^{G,\alpha}$ as
\begin{equation}\label{eq:descriptionRedGFPA}
A_r^{G,\alpha}=\contz(G\bs X)\cdot \overline{\{m\in \M(A)^{G,\alpha}: f\cdot m, m\cdot f\in A_c\;\forall f\in C_c(X)\}} \cdot\contz(G\bs X),
\end{equation}
which, as it seems, has not been obtained so far in general. For centrally proper $X\rtimes G$-algebras (\ie, if  $\phi(C_0(X))\subseteq Z\M(A)$), this easily implies the description
\begin{equation}\label{eq:descriptionRedGFPA-Kas}
A_r^{G,\alpha}=\contz(G\bs X)\cdot \{m\in \M(A)^{G,\alpha}: f\cdot m, m\cdot f\in A\;\forall f\in C_0(X)\},
\end{equation}
which is Kasparov's definition (see \cite{Kasparov:Novikov}) of the generalized fixed-point algebra in this situation.
 But in general one should be careful not to  mistake the algebra in
(\ref{eq:descriptionRedGFPA}) with the algebra
\begin{equation}\label{eq-wrong}
\contz(G\bs X)\cdot \{m\in \M(A)^{G,\alpha}: f\cdot m, m\cdot f\in A\;\forall f\in C_0(X)\} \cdot\contz(G\bs X),
\end{equation}
which, in general can be very different from $A_r^{G,\alpha}$. For example, if $G$ is a discrete group and
$A=\K(L^2G)$ is equipped with the structure of a weakly proper  $G\rtimes G$\nb-algebra with respect to the action $\alpha=\Ad\rho$ of $G$ on
$A$, the
right translation action of $G$ on $G$, and  the representation $M:C_0(G)\to \Lb(L^2G)$ as multiplication operators,
then (as we shall see in \S \ref{sec-Landstad} below) $A_r^{G,\alpha}\cong C_r^*(G)$, while the algebra
in (\ref{eq-wrong}) equals the group von Neumann algebra $\Lb(G)$ of $G$. (Use $G\bs G=\{\point\}$ together with the fact
that all multiplication operators $M(f)$ for $f\in C_0(G)$ are compact operators. It  follows that
the algebra in (\ref{eq-wrong}) is  the commutator of the right regular representation of $G$.)
\end{remark}

\section{Landstad duality for general coactions}\label{sec-Landstad}

As an application of the theory of generalized fixed-point algebras developed here, we now want to
obtain a general version of Landstad duality for coactions. As a consequence, we shall
prove some results on Katayama duality for coactions with respect to ``intermediate'' crossed products as
studied in \cite{Kaliszewski-Landstad-Quigg:Exotic}.

Our Landstad duality theorem extends Quigg's Landstad duality theorem for
reduced (or normal) coactions (see \cite{Quigg:Landstad_duality}), and the work presented in this section
is very much inspired by that result although the details of our proof differ substantially.
Notice that a version of Landstad duality for maximal coactions is also known (see \cite[Corollary~4.3]{Kaliszewski-Quigg-Raeburn:ProperActionsDuality}) and can be obtained from the version for normal coactions and the fact that taking maximalizations and normalizations yield equivalences between the categories of maximal and normal coactions (\cite[Corollary~A.3]{Kaliszewski-Quigg-Raeburn:ProperActionsDuality}). Our duality theorem will give an alternative direct proof which does not use this equivalence and also works for other exotic completions.

Recall that a coaction of a locally compact group $G$ on a \cstar{}algebra $B$ is a nondegenerate
injective \Star{}homomorphism $\delta:B\to \M(B\otimes \Cst(G))$
such that
\begin{equation}\label{eq-coaction}
(\id_B\otimes\, \delta_G)\circ \delta=(\delta\otimes \id_G)\circ \delta
\end{equation}
as maps from $B$ to $\M(B\otimes \Cst(G)\otimes \Cst(G))$,
where $\delta_G:\Cst(G)\to \M(\Cst(G)\otimes \M(\Cst(G)))$ is the integrated form of the
strictly continuous homomorphism
$G\ni s\mapsto u_s\otimes u_s \in U\M(\Cst(G)\otimes \Cst(G))$
 and $s\mapsto u_s$ denotes the canonical inclusion of $G$ into the group
$U\M(\Cst(G))$ of unitary multipliers of $\Cst(G)$.
Note that ``$\otimes$'' denotes the minimal (or spatial) \cstar{}tensor product.
We shall always assume that the coaction is \emph{nondegenerate} in the
sense that $\cspn\big(\delta(B)(1\otimes \Cst(G))\big)=B\otimes \Cst(G)$.

In what follows we write $w_G:G\to U\M(\Cst(G)) $ for the map $w_G(s)=u_s$.
Then $w_G$ may be viewed as a unitary in $\M(\contz(G)\otimes \Cst(G))\cong \contb^{\st}(G, \M(\Cst(G)))$, the \cstar{}algebra of strictly continuous bounded functions $G\to \M(\Cst(G))$, and for any
nondegenerate \Star{}homomorphism $\sigma:\contz(G)\to \M(D)$, where $D$ is some \cstar{}algebra, we can
consider the element $\sigma\otimes \id(w_G)\in U\M(D\otimes \Cst(G))$.
Recall that a \emph{covariant homomorphism} of the cosystem $(B,G,\delta)$ into $\M(D)$ consists of a pair
$(\pi,\sigma)$ in which $\pi:B\to \M(D)$ and $\sigma:\contz(G)\to \M(D)$ are nondegenerate \Star{}homomorphisms
satisfying the \emph{covariance condition}:
\begin{equation}\label{co-covariance}
(\pi\otimes \id_G)\circ \delta(b)=(\sigma\otimes \id_G)(w_G)(\pi(b)\otimes 1)(\sigma\otimes\id_G)(w_G)^*\quad\mbox{for all } b\in B.
\end{equation}
A \emph{crossed product} of $(B, G,\delta)$ is a triple $(A, j_B, j_{\contz(G)})$ consisting of a
\cstar{}algebra $A$ together with a covariant homomorphism $(j_B, j_{\contz(G)})$ of $(B,G,\delta)$
into $\M(A)$ such that
\begin{enumerate}
\item $A=\overline{j_B(B)j_{\contz(G)}(\contz(G))}\subseteq \M(A)$;
\item For any covariant homomorphism $(\pi,\sigma)$ of $(B,G,\delta)$ into some $\M(D)$ there exists
a \Star{}homomorphism $\pi\rtimes\sigma: A\to \M(D)$ such that $\pi=(\pi\rtimes \sigma)\circ j_B$ and
$\sigma=(\pi\rtimes\sigma)\circ j_{\contz(G)}$.
\end{enumerate}
A crossed product always exists and is unique up to isomorphism. We denote it by
$(B\rtimes_\delta \widehat{G}, j_B, j_{\contz(G)})$ (the notation $\widehat{G}$ indicates that a coaction crossed
product should be regarded as a crossed product by a dual object of $G$ -- indeed, in case where $G$ is abelian, it
is a crossed product by an action of the dual group $\widehat{G}$).

If $B\rtimes_\delta \widehat G$ is a coaction crossed product, there is a \emph{dual action}
$\widehat{\delta}:G\to \Aut(B\rtimes_\delta \widehat{G})$ given by $\widehat{\delta}_s=j_B\rtimes (j_{\contz(G)}\circ \r_s)$
for $s\in G$, where $\r_s\in \Aut(\contz(G))$ is the right translation action $(\r_s(f) )(t)=f(ts)$. With respect to this action,
the canonical homomorphism $j_{\contz(G)}\colon \contz(G)\to \M(B\rtimes_\delta\dualG)$ is $G$\nb-equivariant,
so that $(B\rtimes_\delta\dualG,\dual\delta,j_{\contz(G)})$ is a weakly proper $G\rtimes G$\nb-algebra.
We always endow $B\rtimes_\delta \dualG$ with this structure of a weakly proper $G\rtimes G$\nb-algebra and we shall simply say \emph{weak} $G\rtimes G$\nb-algebra below.

By Katayama's duality theorem (\cite{Katayama:Takesaki_Duality}), we always have a canonical surjective \Star{}homomorphism
\begin{equation}\label{eq-dualhom}
\Phi_B: B\rtimes_\delta \widehat{G}\rtimes_{\widehat{\delta}}G\to B\otimes \K(L^2G)
\end{equation}
which is given as the integrated form of the covariant homomorphism
$(\pi, U)$ of $(B\rtimes_\delta \widehat{G}, G,\widehat\delta)$ with
$$\pi= (\id_B\otimes\lambda)\circ \delta\rtimes (1\otimes M)\quad\text{and}\quad U=1_B\otimes \rho$$
where $\lambda$ and $\rho$ denote the left and right regular representations of $G$ and $M:\contz(G)\to \Lb(L^2G)$
the representation by multiplication operators.
For a quite detailed overview of the theory of co-systems
and their crossed products we refer to \cite{Echterhoff-Kaliszewski-Quigg-Raeburn:Categorical}.
We recall the following definition given in \cite{Echterhoff-Kaliszewski-Quigg:Maximal_Coactions}:

\begin{definition}\label{def-max-normal}
The coaction $\delta:B\to \M(B\otimes \Cst(G))$ is called \emph{maximal} if the map $\Phi_B$
in (\ref{eq-dualhom})
is an isomorphism. Moreover, the coaction $\delta$ is called \emph{normal} if $\Phi_B$ factors through
an isomorphism $\Phi_{B,r}:B\rtimes_\delta \widehat{G}\rtimes_{\widehat\delta,r}G\to B\otimes \K(L^2G)$.
\end{definition}

Normal coactions have first been studied by Quigg in \cite{Quigg:FullAndReducedCoactions}. He shows
that a coaction $\delta$ is normal if and only if $j_B:B\to \M(B\rtimes_\delta \widehat{G})$ is injective
and that every coaction has a \emph{normalization} which is given by the coaction
$\delta_\red: B_\red\to \M(B_\red\otimes \Cst(G))$ on the quotient $B_\red\defeq B/I$, where $I=\ker j_B$, given by the formula
$\delta_\red(b+I):=(q\otimes \id_G)\circ \delta(b)$, where $q:B\to B_\red$ denotes the quotient map. This homomorphism induces an isomorphism $B\rtimes_\delta\dualG\congto B_\red\rtimes_{\delta_\red}\dualG$ of weak $G\rtimes G$\nb-algebras.
Moreover, if $\|\cdot\|_\mu$ is any crossed-product norm on $C_c(G,A)$ for a system $(A,G,\alpha)$ such
that $A\rtimes_{\alpha,\mu}G$ admits a dual coaction $\widehat{\alpha}_\mu$,  the dual coaction $\widehat{\alpha}_r$
on the reduced crossed product $A\rtimes_{\alpha,r}G$ is the normalization of $\widehat{\alpha}_\mu$.

On the other hand, it is shown in \cite{Echterhoff-Kaliszewski-Quigg:Maximal_Coactions} that, for every coaction $\delta$, there exists a maximal
coaction $\delta_{u}: B_{u}\to \M(B_u\otimes \Cst(G))$ such that $B=B_u/J$ for a suitable ideal
$J$ in such a way that $\delta_u$ factors through $B$ to give the original coaction $\delta$ and such that the surjection $B_u\onto B$ induces an isomorphism $B_u\rtimes_{\delta_u}\widehat{G}\cong B\rtimes_{\delta}\widehat{G}$ of weak $G\rtimes G$\nb-algebras. The coaction $(B_u,\delta_u)$ is, up to isomorphism, uniquely determined by these properties and is called the \emph{maximalization} of $(B,\delta)$. The construction of maximalizations given in \cite{Echterhoff-Kaliszewski-Quigg:Maximal_Coactions} is quite involved and is not functorial. Our results below will give, in particular, an alternative functorial construction for maximalizations of arbitrary coactions. Note that the dual coaction $\widehat{\alpha}_u$ on the full crossed product $A\rtimes_{\alpha}G$  is the maximalization of a dual coaction
$\widehat{\alpha}_\mu$ on an intermediate crossed product $A\rtimes_{\alpha,\mu}G$.

\begin{definition}\label{def-E-dual}
Let $\delta:B\to \M(B\otimes \Cst(G))$ be a coaction and let $\|\cdot\|_\pn$ be any crossed-product norm on
$\contc(G, B\rtimes_\delta \widehat{G})$, viewed as subalgebra of $(B\rtimes_\delta \widehat{G})\rtimes_{\widehat\delta}G$.
Then we say that $\delta$ satisfies \emph{$\pn$-duality} or, shortly, that $\delta$ is a \emph{$\pn$\nb-coaction}, if the homomorphism
$\Phi_B$ of (\ref{eq-dualhom}) factors through an isomorphism
$$\Phi_{B,\pn}:B\rtimes_\delta \widehat{G}\rtimes_{\widehat{\delta}, \pn}G\to B\otimes \K(L^2G).$$
A {\em $\pn$-ization} of $(B,\delta)$ is a $G$\nb-coaction $(B_\pn,\delta_\pn)$ together with an isomorphism $B\rtimes_\delta\dualG\cong B_\pn\rtimes_{\delta_\pn}\dualG$ of weak $G\rtimes G$\nb-algebras in such a way that viewing $\|\cdot \|_\pn$ as a crossed-product norm on $\contc(G,B_\pn\rtimes_{\delta_\pn}\dualG)$ via this isomorphism, $(B_\pn,\delta_\pn)$ is a $\pn$\nb-coaction.
\end{definition}

Notice that in the above definition we do not require that $B\rtimes_\delta \widehat{G}\rtimes_{\widehat{\delta}, \pn}G$ carries a (bi)dual coaction---in fact the discussion below shows that
this is automatic. For the reduced crossed-product norms, the above definition specializes to normal coactions and
normalizations, \ie, an $\red$-coaction is just a normal coaction and an $\red$-ization is just a normalization.
Similarly, for maximal crossed-product norms we get maximal coactions and maximalizations, \ie, a $\un$-coaction is a maximal coaction and a $\un$-ization is a
maximalization of a given coaction.

It is clear that every coaction satisfies $\pn$-duality for some crossed-product norm $\|\cdot\|_\pn$, since
the quotient $\big(B\rtimes_\delta \widehat{G}\rtimes_{\widehat\delta}G\big)/\ker\Phi_B$ always lies between
the maximal and the reduced crossed product by $\widehat{\delta}$.
On the other hand, if $\|\cdot\|_\pn$ is some crossed-product norm on $\contc(G, B\rtimes_\delta \widehat{G})$
such that $(B,\delta)$ satisfies $\pn$-duality, then the canonical $G$\nb-coaction on $B\otimes \K(L^2G)$ (see Equation~\eqref{eq:CoactionOnBotimesK}) may be viewed as a coaction for the double crossed product $B\rtimes_\delta\dualG\rtimes_{\dual\delta,\pn}G\cong B\otimes \K(L^2G)$. This coaction necessarily factors the bidual coaction on $B\rtimes_\delta\dualG\rtimes_{\dual\delta,\un}G$.
Conversely, we shall see below
that for any given crossed product $B\rtimes_\delta \widehat{G}\rtimes_{\widehat{\delta}, \pn}G$ admitting
a (bi)dual coaction $\widehat{\widehat{\delta}}_\pn$, there exists a $\pn$-ization $(B_\pn,\delta_\pn)$ of $(B,\delta)$.
This will lead, in particular, to a positive answer of Conjecture 6.14 in \cite{Kaliszewski-Landstad-Quigg:Exotic}.

The main result in this section
is the following theorem, which provides a general version
of Landstad $\pn$-duality for coactions. Note that in case of normal coactions,
Landstad duality has been obtained by Quigg in \cite{Quigg:Landstad_duality} (see
also \cite{Kaliszewski-Quigg-Raeburn:ProperActionsDuality, Kaliszewski-Quigg:Categorical_Landstad}).

In what follows, $\contz(G)$ will be always endowed with the right translation action $\r:G\to\Aut(C_0(G))$.

\begin{theorem}[{\cf~\cite[Theorem 3.3]{Quigg:Landstad_duality}}]\label{thm-landstad}
Suppose that $A$ is a weakly proper $G\rtimes G$\nb-algebra with respect to the $G$\nb-equivariant
 structure map $\phi: \contz(G)\to \M(A)$ and the action $\alpha:G\to \Aut(A)$.
 Assume that $\|\cdot \|_\pn$ is a crossed-product norm on $\contc(G,A)$ which lies between
 $\|\cdot \|_u$ and $\|\cdot\|_r$ and which admits a dual coaction
 $$\widehat{\alpha}_\pn:A\rtimes_{\alpha,\pn}G\to \M(A\rtimes_{\alpha,\pn}G\otimes \Cst(G)).$$
 Let us write $B_\pn:=A_\pn^{G,\alpha}$ for any such $\|\cdot\|_\pn$.
 Then there is a canonical coaction $\delta_{\F_\pn(A)}$ of $G$ on $\F_\pn(A)$
\textup(described in Lemma \ref{lem-wg} below\textup)
 which is compatible with the dual coaction $\widehat{\alpha}_\pn$ on $A\rtimes_{\alpha,\pn}G$ and
therefore induces a compatible coaction $\delta_\pn$ on
$B_\pn\cong \K(\F_\pn(A))$ such that the following are true:
\begin{enumerate}
 \item The cosystem $(B_\pn,\delta_\pn)$ is Morita equivalent to $(A\rtimes_{\alpha,\pn}G, \widehat{\alpha}_\pn)$.
 \item $(B_u,\delta_u)$ is the maximalization of $(B_\pn,\delta_\pn)$ and $(B_r, \delta_r)$ is the normalization
 of $(B_\pn,\delta_\pn)$.
 \item The dual system $(B_\pn\rtimes_{\delta_\pn}\widehat{G}, G, \widehat{\delta}_\pn)$ is isomorphic to
$(A, G,\alpha)$ as $G\rtimes G$\nb-algebras via the covariant homomorphism
$k\rtimes\phi: B_\pn\rtimes_{\delta_\pn}\widehat{G}\to A$, where
 $k:B_\pn\to \M(A)$ extends the canonical inclusion $A_c^{G,\alpha}\into \M(A)$.
 \item $(B_\pn, G,\delta_\pn)$ satisfies $\pn$-duality and hence is a $\pn$-coaction.
 \end{enumerate}
Conversely, let $(B,\delta)$ be a $\pn$\nb-coaction for some crossed-product norm $\|\cdot\|_\pn$ on $\contc(G,B\rtimes_\delta\dualG)$
 and let $B_\pn=A_\pn^{G,\alpha}$ for the weak $G\rtimes G$\nb-algebra $(A,\alpha)\defeq (B\rtimes_\delta\dualG,\dual\delta)$
 equipped with the coaction $\delta_{\pn}$. Then $(B_\pn,\delta_\pn)$ is isomorphic to $(B,\delta)$.
\end{theorem}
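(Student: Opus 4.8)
The plan is to establish the forward direction (properties (1)--(4)) and then the converse, organizing the forward part in the order (module coaction) $\to$ (1) $\to$ (3) $\to$ (4) $\to$ (2), since everything hinges on the isomorphism in (3). First I would construct the coaction $\delta_{\F_\pn(A)}$ promised in Lemma~\ref{lem-wg}. Since $X=G$ carries the right-translation action, I have $\phi\colon\contz(G)\to\M(A)$ and the canonical unitary $w_G\in\M(\contz(G)\otimes\Cst(G))$, so $(\phi\otimes\id)(w_G)\in U\M(A\otimes\Cst(G))$; on $\F_c(A)=\contc(G)\cdot A\sbe A$ I would try the formula $\delta_\F(\xi)=(\phi\otimes\id)(w_G)^*(\xi\otimes 1)$ (or its adjoint) and check that it lands in $\M(\F_\pn(A)\otimes\Cst(G))$ (external tensor product over $(A\rtimes_{\alpha,\pn}G)\otimes\Cst(G)$), is nondegenerate, satisfies the coaction identity, and is compatible with $\widehat{\alpha}_\pn$ in the sense that $\bbraket{\delta_\F(\xi)}{\delta_\F(\eta)}=\widehat{\alpha}_\pn(\bbraket{\xi}{\eta}_{B_0})$ and $\delta_\F(\xi\cdot\varphi)=\delta_\F(\xi)\widehat{\alpha}_\pn(\varphi)$. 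Because $X=G$ is a \emph{free} proper $G$\nb-space, Proposition~\ref{prop-factor} makes $\F_\pn(A)$ a genuine $B_\pn-A\rtimes_{\alpha,\pn}G$ imprimitivity bimodule, so the linking-algebra (Morita-equivalence-of-cosystems) machinery transports $\delta_\F$ and $\widehat{\alpha}_\pn$ to a coaction $\delta_\pn$ on $B_\pn=\K(\F_\pn(A))$, determined by sending $\EE(\xi\eta^*)$ to the analogous left inner product of $\delta_\F(\xi)$ and $\delta_\F(\eta)$; this at once gives the Morita equivalence of cosystems in (1).

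The decisive step is (3). Here I exploit that the orbit space $G\backslash G$ is a point, so $\contc(G\backslash G)=\C$ and hence $A_c^{G,\alpha}=\widetilde{A}_c^{G,\alpha}$; by definition $k(A_c^{G,\alpha})\cdot\phi(\contc(G))\sbe A_c$, and $\overline{A_c}=A$ because $\phi$ is nondegenerate. I would first verify that $(k,\phi)$ is a covariant homomorphism of $(B_\pn,G,\delta_\pn)$ into $\M(A)$, i.e.\ check the covariance relation (\ref{co-covariance}) from the defining formula for $\delta_\pn$ and the occurrence of $w_G$ through $\phi$; the universal property of the coaction crossed product then yields $k\rtimes\phi\colon B_\pn\rtimes_{\delta_\pn}\dualG\to A$ with $A=\overline{k(B_\pn)\phi(\contz(G))}$. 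The real work is to show $k\rtimes\phi$ is a bijection. I would build an inverse out of the averaging map $\EE$ and the expectation-type identities of Lemma~\ref{lem-left-inner-product}, using the generators $j_{B_\pn}(m)j_{\contz(G)}(f)$ to recover $m\cdot f\in A_c$ and conversely reconstructing $A_c^{G,\alpha}$-elements from $A$ via $\EE$; injectivity is where faithfulness of the relevant induced representation (Propositions~\ref{prop-induced} and~\ref{prop:RegRepInduced}) enters. Finally I would check that $k\rtimes\phi$ carries $\widehat{\delta}_\pn$ to $\alpha$ and intertwines the two $\contz(G)$\nb-structures, so that it is an isomorphism of $G\rtimes G$\nb-algebras. \textbf{This bijectivity is the main obstacle}, exactly as in every Landstad-type theorem.

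Granting (3), property (4) follows by identifying the Katayama map: under the isomorphism of (3) the double crossed product $B_\pn\rtimes_{\delta_\pn}\dualG\rtimes_{\widehat{\delta}_\pn,\pn}G$ becomes $A\rtimes_{\alpha,\pn}G$ (the dual action goes to $\alpha$ and the $\pn$\nb-norm is preserved), and I would show that $\Phi_{B_\pn}$ then agrees with the Rieffel isomorphism $A\rtimes_{\alpha,\pn}G\congto B_\pn\otimes\K(L^2G)$ coming from the imprimitivity bimodule $\F_\pn(A)$; comparing the two maps on generators gives $\pn$\nb-duality. For (2) I note that, by (3), the dual systems of all the $(B_\pn,\delta_\pn)$ coincide with $(A,\alpha)$ compatibly with the $\contz(G)$\nb-structures. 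Taking $\pn=\un$ gives a maximal coaction (by (4) with $\pn=\un$) with the same dual weak $G\rtimes G$\nb-algebra as $(B_\pn,\delta_\pn)$, and taking $\pn=\red$ gives a normal one; the uniqueness of maximalizations and normalizations then identifies $(B_u,\delta_u)$ and $(B_r,\delta_r)$ as the maximalization and normalization of $(B_\pn,\delta_\pn)$.

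For the converse I would apply the forward direction to $A=B\rtimes_\delta\dualG$ with $\alpha=\dual\delta$ and $\phi=j_{\contz(G)}$. Property (3) then gives $B_\pn\rtimes_{\delta_\pn}\dualG\cong A=B\rtimes_\delta\dualG$ as $G\rtimes G$\nb-algebras, so the two $\pn$\nb-coactions $(B,\delta)$ and $(B_\pn,\delta_\pn)$ have the same dual system with compatible $\contz(G)$\nb-structures. It remains to recover the coefficient algebra: I would show that the canonical map $j_B\colon B\to\M(A)$ takes values in $k(B_\pn)$ and induces an isomorphism $B\congto B_\pn$ intertwining $\delta$ with $\delta_\pn$. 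Concretely, I would verify that each $j_B(b)$ is $\dual\delta$\nb-invariant with $\phi(f)j_B(b),\,j_B(b)\phi(f)\in A_c$, so that $j_B(b)$ lies in $\widetilde{A}_c^{G,\alpha}$ (and, after completion, in $B_\pn$), and that $\EE$ together with the $\pn$\nb-duality of $\delta$ matches $j_B(B)$ exactly with $B_\pn$; the intertwining $\delta_\pn\circ j_B=(j_B\otimes\id)\circ\delta$ is then a computation with $w_G$. Since these reconstructions are inverse to the crossed-product passage, $(B_\pn,\delta_\pn)\cong(B,\delta)$, completing the proof.
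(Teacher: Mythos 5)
Your construction of $\delta_{\F_\pn(A)}$ and item (1) match the paper (modulo a sign: the paper's formula is $\delta_{\F_\pn(A)}(\xi)=\phi\otimes\id_G(w_G)(\xi\otimes 1)$, no adjoint), but the proposal has genuine gaps at the three hard points. First, the converse as you sketch it would fail for non-normal coactions: your plan is to realize $B\cong B_\pn$ inside $\M(A)$ via $j_B$, but $j_B\colon B\to\M(A)$ is injective precisely when $\delta$ is \emph{normal}, and likewise the map $k\colon B_\pn\to\M(A)$ (which by Proposition~\ref{prop:RegRepInduced} is the extension of the inclusion $A_c^{G,\alpha}\into\M(A)$, i.e.\ the representation induced from $\Lambda_A$) factors through $B_\red$ and is non-injective for $\pn\neq\red$. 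So $B_\pn$ is not a subalgebra of $\M(A)$ and the phrase ``after completion, in $B_\pn$'' has no meaning there; moreover $j_B(b)\phi(f)$ lies in $A$ but in general only has compact support on one side, so $j_B(b)\notin\widetilde A_c^{G,\dual\delta}$ as you claim. The paper avoids $\M(A)$ entirely: Lemma~\ref{lem-Mor1} uses the \emph{hypothesized} Katayama isomorphism $\Phi_\pn$ to identify $\F_\pn(B\rtimes_\delta\dualG)\cong B\otimes\F(G)$ as Hilbert modules, whence $B_\pn=\K(\F_\pn(A))\cong B$, and then matches the coactions via the explicit formula for the bidual coaction on $B\otimes\K(L^2G)$.

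Second, in (3) you correctly flag injectivity of $k\rtimes\phi$ as the main obstacle, but the tools you name do not reach it: Propositions~\ref{prop-induced} and~\ref{prop:RegRepInduced} give faithfulness of representations of $B_\pn$, not of the coaction crossed product $B_\pn\rtimes_{\delta_\pn}\dualG$, and no inverse built from $\EE$ is exhibited. The missing idea in the paper is that the coaction crossed product depends only on the normalization: $(k,\phi)$ factors through the covariant pair $(\id,\phi)$ of $(B_\red,\delta_\red)$, and Quigg's Landstad duality for normal coactions \cite[Proposition~3.1]{Quigg:Landstad_duality} then shows $\id\rtimes\phi$ is an isomorphism. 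Third, your ordering $(3)\to(4)\to(2)$ is circular at (4): there is no a priori ``Rieffel isomorphism $A\rtimes_{\alpha,\pn}G\cong B_\pn\otimes\K(L^2G)$ coming from the imprimitivity bimodule'' --- a Morita equivalence yields no isomorphism, and producing one is essentially the assertion of (4). The paper first proves (2) from the Morita equivalence in (1) via Lemma~\ref{lem-max} together with the known maximality of $\widehat{\alpha}_\un$ and normality of $\widehat{\alpha}_\red$; maximality of $\delta_\un$ then anchors the Katayama isomorphism $\Phi_{B_u}$ at the universal level, and Lemma~\ref{lem-Mor} ($\F_\un(A)\cong B_u\otimes\F(G)$) plus the Rieffel correspondence transports it down to the $\pn$-level. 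Relatedly, deriving (2) from ``uniqueness of maximalizations'' requires a connecting surjection $B_u\onto B_\pn$ inducing the identification of crossed products, which you never construct; the paper's linking-algebra route via Lemma~\ref{lem-max} sidesteps this.
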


Applying Theorem \ref{thm-landstad} to the weak $G\rtimes G$\nb-algebra $(B\rtimes_\delta \dualG, \dual\delta)$ for a given coaction
$\delta:B\to \M(B\otimes C^*(G))$ gives:

\begin{corollary}\label{cor-landstad}
Suppose that $\|\cdot\|_\pn$ is any crossed-product norm on $\contc(G, B\rtimes_\delta \widehat{G})$ such that
the crossed product $B\rtimes_\delta \widehat{G}\rtimes_{\widehat\delta, \pn}G$ admits a dual coaction.
Then $(B,\delta)$ admits a $\pn$-ization, \ie, there exists a coaction $\delta_\pn:B_\pn\to \M(B_\pn\otimes \Cst(G))$
``lying between'' the maximalization $\delta_u$ and the normalization $\delta_r$ of $\delta$
such that $\delta_\pn$ satisfies $\pn$-duality and such that the dual systems
$(B_\pn\rtimes_{\delta_\pn}\widehat{G}, G, \widehat{\delta}_\pn)$ and $(B\rtimes_\delta \widehat{G}, G, \widehat{\delta})$
are isomorphic as weak $G\rtimes G$\nb-algebras. If $\delta:B\to\M(B\otimes C^*(G))$ is already a $\pn$\nb-coaction,
then $(B_\pn,\delta_\pn)\cong (B,\delta)$.
\end{corollary}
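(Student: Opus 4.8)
The plan is to deduce the corollary directly from Theorem~\ref{thm-landstad} by feeding it the weak $G\rtimes G$\nb-algebra attached to the coaction. Concretely, I would set $(A,\alpha)\defeq(B\rtimes_\delta\dualG,\dual\delta)$ with structure map $\phi\defeq j_{\contz(G)}\colon\contz(G)\to\M(A)$; as recalled in the discussion preceding Definition~\ref{def-max-normal}, this triple is a weakly proper $G\rtimes G$\nb-algebra. First I would check that the hypotheses of Theorem~\ref{thm-landstad} are met: the given $\|\cdot\|_\pn$ is a crossed-product norm on $\contc(G,A)$, hence lies between $\|\cdot\|_r$ and $\|\cdot\|_u$, and by assumption the crossed product $A\rtimes_{\alpha,\pn}G=B\rtimes_\delta\dualG\rtimes_{\dual\delta,\pn}G$ carries a dual coaction $\widehat{\alpha}_\pn$. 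Theorem~\ref{thm-landstad} then furnishes the generalized fixed-point algebra $B_\pn\defeq A_\pn^{G,\alpha}$ together with a coaction $\delta_\pn$ on it satisfying conclusions~(1)--(4).

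Next I would translate those conclusions into the assertions of the corollary. Conclusion~(4) gives that $(B_\pn,\delta_\pn)$ satisfies $\pn$-duality, and conclusion~(3) gives an isomorphism $(B_\pn\rtimes_{\delta_\pn}\dualG,G,\widehat{\delta}_\pn)\cong(A,G,\alpha)=(B\rtimes_\delta\dualG,G,\dual\delta)$ of weak $G\rtimes G$\nb-algebras; together these say exactly that $(B_\pn,\delta_\pn)$ is a $\pn$-ization of $(B,\delta)$ in the sense of Definition~\ref{def-E-dual}, so the existence, the $\pn$-duality, and the dual-system isomorphism all follow. For the final sentence, if $\delta$ is already a $\pn$\nb-coaction I would apply the converse half of Theorem~\ref{thm-landstad} to this same $(A,\alpha)$, obtaining $(B_\pn,\delta_\pn)\cong(B,\delta)$ at once.

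The one point needing a small additional argument --- and the closest thing to an obstacle --- is the claim that $\delta_\pn$ ``lies between'' the maximalization $\delta_u$ and the normalization $\delta_r$ \emph{of $\delta$}, since conclusion~(2) identifies $(B_u,\delta_u)=(A_u^{G,\alpha},\delta_u)$ and $(B_r,\delta_r)=(A_r^{G,\alpha},\delta_r)$ only as the maximalization and normalization of $(B_\pn,\delta_\pn)$. Here I would invoke the uniqueness of maximalizations and normalizations recalled after Definition~\ref{def-max-normal}: both are determined, up to isomorphism, by the isomorphism class of the coaction crossed product as a weak $G\rtimes G$\nb-algebra, and by the isomorphism of the previous paragraph $B_\pn\rtimes_{\delta_\pn}\dualG\cong B\rtimes_\delta\dualG$ as such; hence $(B_u,\delta_u)$ and $(B_r,\delta_r)$ are also the maximalization and normalization of $(B,\delta)$. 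The canonical equivariant surjections $B_u\onto B_\pn$ (the maximalization map for $(B_\pn,\delta_\pn)$) and $B_\pn\onto B_r$ (onto its normalization) then express the ``between'' condition. Beyond this bookkeeping I do not expect any genuine difficulty: all the substantive content sits in Theorem~\ref{thm-landstad}, and what remains is unwinding Definition~\ref{def-E-dual} together with the identification just described.
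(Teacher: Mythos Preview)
Your proposal is correct and follows exactly the paper's approach: the paper derives the corollary simply by applying Theorem~\ref{thm-landstad} to the weak $G\rtimes G$\nb-algebra $(B\rtimes_\delta\dualG,\dual\delta)$, with no further argument given. Your write-up is in fact more detailed than the paper's, in particular your justification of the ``lying between'' clause via uniqueness of maximalizations and normalizations is a point the paper leaves implicit.
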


Recall that for a right Hilbert $B$-module $\E$, the multiplier module $\M(\E)$ is defined as
the set $\L_B(B,\E)$ of adjointable operators from the standard Hilbert $B$-module $B$ into $\E$.
The $B$-valued inner product on $\E$ then extends to an $\M(B)$-valued inner product on
$\M(\E)$ by the formula $\braket{m}{n}_{\M(B)}=m^*\circ m\in \L_B(B)=\M(B)$.
On the other hand, we get a left $\M(\K_B(\E))$-valued inner product by the
formula ${_{\M(\K(\E))}\braket{m}{n}}=m \circ n^*\in \L_B(\E)=\M(\K_B(\E))$.
Note that there is a canonical inclusion map $\E\into \M(\E)$ by identifying an element
$\xi\in \E$ with the operator $b\mapsto \xi\cdot b\in \L_B(B,\E)$. The adjoint of this operator
is given by $\eta\mapsto \braket{\xi}{\eta}_B\in \L_B(\E,B)$.
For more information on multiplier bimodules, see \cite{Echterhoff-Kaliszewski-Quigg-Raeburn:Categorical}.

In what follows we write $\E\otimes D$ for the Hilbert $B\otimes D$-module
which is obtained as the external (minimal) tensor product
of the Hilbert $B$-module $\E$ with the \cstar{}algebra $D$, viewed as a Hilbert $D$-module.

To prove Theorem~\ref{thm-landstad}, we start with the following preliminary result:

\begin{lemma}\label{lem-tensor}
Suppose that $A$ is a weakly proper $X\rtimes G$-algebra with respect to the action
$\alpha:G\to \Aut(A)$ and the structure map $\phi:\contz(X)\to \M(A)$. Let $D$ be any
\cstar{}algebra and consider $A\otimes D$ as a weakly proper $X\rtimes G$-algebra
for the action $\alpha\otimes\id_D$ and the structure map $\phi\otimes 1$.

Then, for each given crossed-product norm $\|\cdot\|_\pn$ on $\contc(G,A)$, there exists a \textup(unique\textup)
corresponding crossed-product norm $\|\cdot \|_\qn$ on $\contc(G, A\otimes D)$
such that the canonical inclusion $\contc(G,A)\odot D\subseteq \contc(G, A\otimes D)$ induces
an isomorphism
$$(A\rtimes_{\alpha,\pn}G)\otimes \Cst(G)\cong (A\otimes D)\rtimes_{\alpha\otimes \id, \qn}G.$$
The corresponding inclusion $\F_c(A)\otimes D\subseteq \F_c(A\otimes D)$ extends to
an isomorphism
$$\F_\pn(A)\otimes D\cong \F_\qn(A\otimes D)$$
and, therefore, the inclusion of $A_c^{G,\alpha}\odot D$ into $(A\otimes D)^{G,\alpha\otimes \id_D}_c$
induces an isomorphism
$$A_\pn^{G,\alpha}\otimes D\cong (A\otimes D)^{G,\alpha\otimes \id}_\qn.$$
In particular, we may regard $\F_c(A\otimes D)$ as a dense submodule of $\F_\pn(A)\otimes D$ and
$(A\otimes D)^{G,\alpha\otimes \id_D}_c$ as a dense subalgebra of $A_\pn^{G,\alpha}\otimes D$.
\end{lemma}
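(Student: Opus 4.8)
The first displayed isomorphism should read with tensor factor $D$ in place of $\Cst(G)$---as is forced by the canonical inclusion $\contc(G,A)\odot D\subseteq\contc(G,A\otimes D)$ and by the two subsequent isomorphisms---and it is this statement I would prove. The plan is to first manufacture $\|\cdot\|_\qn$ together with the crossed-product isomorphism, and then to transport it through the module $\F$ and through the passage to compact operators.

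\textbf{Constructing $\qn$.} Writing $\iota_A^\pn\colon A\to\M(A\rtimes_{\alpha,\pn}G)$ and $u\colon G\to U\M(A\rtimes_{\alpha,\pn}G)$ for the canonical maps, I would feed $\pi\defeq\iota_A^\pn\otimes\id_D$ and $U_t\defeq u_t\otimes 1$ into the universal property: the relation $u_t\iota_A^\pn(a)u_t^*=\iota_A^\pn(\alpha_t(a))$ makes $(\pi,U)$ a covariant representation of $(A\otimes D,G,\alpha\otimes\id)$ in $\M\big((A\rtimes_{\alpha,\pn}G)\otimes D\big)$, so its integrated form is a \Star{}homomorphism $\Pi\colon(A\otimes D)\rtimes_{\alpha\otimes\id,\un}G\to(A\rtimes_{\alpha,\pn}G)\otimes D$ with $\Pi(\varphi\otimes d)=[\varphi]_\pn\otimes d$ on $\contc(G,A)\odot D$, where $[\varphi]_\pn$ is the class of $\varphi$ in $A\rtimes_{\alpha,\pn}G$. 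As the $[\varphi]_\pn\otimes d$ span a dense subalgebra, $\Pi$ is surjective, and I would simply \emph{define} $\|z\|_\qn\defeq\|\Pi(z)\|$ on $\contc(G,A\otimes D)$; being the pullback of a \cstar{}norm along a \Star{}homomorphism this is a \cstar{}seminorm, and its associated completion is by construction the image $(A\rtimes_{\alpha,\pn}G)\otimes D$.

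\textbf{Checking $\|\cdot\|_\red\le\|\cdot\|_\qn\le\|\cdot\|_\un$.} This is the step requiring genuine input, and the main obstacle; everything else is formal transport along $\Pi$. The upper bound is automatic, $\Pi$ factoring through the universal crossed product. For the lower bound I would invoke the standard identification $(A\otimes D)\rtimes_{\alpha\otimes\id,\red}G\cong(A\rtimes_{\alpha,\red}G)\otimes D$ (reduced crossed products commute with minimal tensor products): composing $\Pi$ with $q_\pn\otimes\id_D$ and this identification recovers exactly the regular representation $\Lambda_{A\otimes D}$ on $\contc(G,A\otimes D)$, whence $\|z\|_\qn\ge\|z\|_\red$ and $\|\cdot\|_\qn$ is a genuine crossed-product norm. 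The induced map then gives $(A\otimes D)\rtimes_{\alpha\otimes\id,\qn}G\cong(A\rtimes_{\alpha,\pn}G)\otimes D$. Uniqueness follows since every crossed-product norm is dominated by the $L^1$\nb-norm, hence continuous for the inductive limit topology, while $\contc(G,A)\odot D$ is inductive-limit dense in $\contc(G,A\otimes D)$, so its values there determine the norm.

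\textbf{Transporting to $\F$ and to $\K$.} On dense submodules I would use the tautological inclusion $\F_c(A)\odot D\to\F_c(A\otimes D)$, $(f\cdot a)\otimes d\mapsto f\cdot(a\otimes d)$ (legitimate since the structure map of $A\otimes D$ is $\phi\otimes 1$). A one-line computation gives $\bbraket{\xi\otimes d}{\eta\otimes e}(t)=\bbraket{\xi}{\eta}_{B_0}(t)\otimes d^*e$, which under the identification above is precisely the external-tensor-product inner product of $\F_\pn(A)\otimes D$; thus the inclusion preserves inner products. Since an inner-product preserving linear map with dense range between Hilbert modules over a fixed algebra is automatically a unitary module map, it extends to $\F_\pn(A)\otimes D\cong\F_\qn(A\otimes D)$ (density coming again from the inductive limit topology and Lemma~\ref{lim-inductive-lim-top}). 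Passing to compact operators and using $\K(\E\otimes D)\cong\K(\E)\otimes D$ then yields
\[
(A\otimes D)^{G,\alpha\otimes\id}_\qn=\K\big(\F_\qn(A\otimes D)\big)\cong\K\big(\F_\pn(A)\otimes D\big)\cong A_\pn^{G,\alpha}\otimes D,
\]
implemented by $m\otimes d\mapsto m\otimes d$ on $A_c^{G,\alpha}\odot D\hookrightarrow(A\otimes D)^{G,\alpha\otimes\id}_c$ (well defined because $\alpha_t(m)=m$ forces $(\alpha\otimes\id)_t(m\otimes d)=m\otimes d$). The two density statements closing the lemma are then immediate.
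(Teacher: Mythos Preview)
Your proof is correct and follows essentially the same route as the paper's: construct a covariant representation of $(A\otimes D,G,\alpha\otimes\id)$ into $\M\big((A\rtimes_{\alpha,\pn}G)\otimes D\big)$ to produce a surjection from the universal crossed product, invoke the reduced-crossed-product/minimal-tensor compatibility $(A\otimes D)\rtimes_{\alpha\otimes\id,\red}G\cong(A\rtimes_{\alpha,\red}G)\otimes D$ to sandwich the resulting norm, and then pass to $\F$ via inner-product comparison on the inductive-limit dense set $\F_c(A)\odot D$. The only (cosmetic) differences are that the paper factors through the universal crossed product first rather than targeting $A\rtimes_{\alpha,\pn}G$ directly, and that you supply an explicit uniqueness argument which the paper leaves implicit; you also correctly flag the typo $\Cst(G)\to D$ in the displayed isomorphism.
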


\begin{remark}
If $\|\cdot\|_\pn=\|\cdot\|_r$ is the reduced norm, then $\|\cdot\|_\qn=\|\cdot\|_r$ is also the reduced norm
on $\contc(G, A\otimes D)$, which follows from the well-known isomorphism
\begin{equation}\label{eq-isoreduced}
(A\otimes D)\rtimes_{\alpha\otimes\id_D, r}G\cong (A\rtimes_{\alpha,r}G)\otimes D.
\end{equation}
But we should point out that even if $\|\cdot\|_\pn=\|\cdot \|_u$ is the universal norm, we cannot
expect in general that $\|\cdot\|_\qn$ is the universal norm as well. To see this consider the
case where $A=\K(L^2G)$ with action $\alpha=\Ad\rho$ and structure map $M:\contz(G)\to \Lb(L^2G)$.
Since $\alpha=\Ad\rho$ is implemented by a unitary representation, it is exterior equivalent to the trivial
action of $G$ on $\K(L^2G)$. Thus we get
$$
(\K(L^2G)\otimes \Cst(G))\rtimes_{\Ad\rho\otimes \id,u}G
\cong (\K(L^2G)\otimes \Cst(G))\otimes_{\max}\Cst(G)
$$
while on the other side we have
$$(\K(L^2G)\rtimes_{\Ad\rho,u}G)\otimes \Cst(G)\cong (\K(L^2G)\otimes \Cst(G))\otimes \Cst(G).$$
These algebras will be completions by different norms if the canonical quotient map
$P: \Cst(G)\otimes_{\max}\Cst(G)\to \Cst(G)\otimes \Cst(G)$ is not an isomorphism,
which is true for any group without Kirchberg's factorization property (F) (see \cite[\S 7]{Kirchberg:Nonsemisplit}).
Now, due to the work of Kirchberg and others, we know that there exist many groups which
do not satisfy this property (see \cite{Anantharaman-Delaroche:On_tensor_products} for a survey on this property).
\end{remark}
\begin{proof}[Proof of the lemma]
Let $(\iota_A, \iota_G):(A,G) \to \M(A\rtimes_{\alpha}G)$ denote the canonical inclusions and
let $j_{A\rtimes G}$ and $j_D$ denote the inclusions of $A\rtimes_{\alpha}G$ and
$D$ into $\M(A\rtimes_{\alpha}G\otimes D)$, respectively.
Then $((j_{A\rtimes G}\circ \iota_A)\otimes j_{D}, \iota_G\otimes 1_D)$ is a covariant homomorphism
of $(A\otimes D, G, \alpha\otimes \id)$ into $\M(A\rtimes_{\alpha}G\otimes D)$ whose
integrated form $\Phi_{u}$ restricts to the inclusion $\contc(G,A)\odot D\into \contc(G, A\otimes D)$.
Thus we see that
$$\Phi_{u}: (A\otimes D)\rtimes_{\alpha\otimes\id_D, u}G\to (A\rtimes_{\alpha}G)\otimes D$$
is a surjective \Star{}homomorphism.
Since
$A\rtimes_{\alpha,\pn}G$ ``lies'' between $A\rtimes_{\alpha, u}G$ and $A\rtimes_{\alpha, r}G$,
we conclude from this and (\ref{eq-isoreduced}) that $(A\rtimes_{\alpha,\pn}G)\otimes D$ ``lies'' between
$(A\otimes D)\rtimes_{\alpha\otimes\id_D, u}G$ and $(A\otimes D)\rtimes_{\alpha\otimes\id_D, r}G$
and it follows that $(A\rtimes_{\alpha,\pn}G)\otimes D\cong (A\otimes D)\rtimes_{\alpha\otimes\id,\qn}G$
for a suitable crossed-product norm $\|\cdot\|_\qn$.

Now, since $A\odot D$ is norm dense in $A\otimes D$, it follows that $\F_c(A)\odot D= \contc(X)A\odot D$
is inductive limit dense in $\F_c(A\otimes D)$, and since the $A\rtimes_{\alpha,\pn}G\otimes D$-valued
inner product on $\F_c(A)\odot D$ coincides with the $(A\otimes D)\rtimes_{\alpha\otimes \id_D, \qn}G$-valued inner
product by the choice of $\qn$, we see that $\F_\pn(A)\otimes D\cong \F_\qn(A\otimes D)$.
 The result follows.
\end{proof}

We also need the following (certainly well-known) auxiliary result:

\begin{lemma}\label{lem-max}
Suppose that $\E$ is a $B-C$ imprimitivity bimodule and that $\delta_\E$ is a coaction of $G$ on $\E$
which implements a Morita equivalence between coactions $(B, \delta_B)$ and $(C,\delta_C)$ of $G$.
Then $\delta_B$ is maximal (resp. normal) if and only if $\delta_C$ is maximal (resp. normal)
and the coaction $(\E, \delta_\E)$ factors through a Morita equivalence
$(\E_n,\delta_{\E_n})$ between the normalizations $(B_n, \delta_{B_n})$ and $(C_n,\delta_{C_n})$.
\end{lemma}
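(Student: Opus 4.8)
My plan is to reduce everything to the single\nb-algebra case by passing to the linking algebra. Recall that giving a coaction $\delta_\E$ on $\E$ implementing a Morita equivalence between $(B,\delta_B)$ and $(C,\delta_C)$ is the same as giving a coaction $\delta_L$ on the linking algebra $L$ of $\E$ that restricts to $\delta_B$ and $\delta_C$ on the two diagonal corners and to $\delta_\E$ on the off\nb-diagonal one; here $B$ and $C$ are complementary \emph{full} corners of $L$, cut out by $\delta_L$\nb-invariant projections $p,1-p\in\M(L)$ (see \cite{Echterhoff-Kaliszewski-Quigg-Raeburn:Categorical}). Forming the coaction crossed product $L\rtimes_{\delta_L}\widehat{G}$ and then the full, respectively reduced, crossed product by the dual action $\widehat{\delta_L}$, the projection $p$ yields a full projection $j_L(p)$ in $L\rtimes_{\delta_L}\widehat{G}\rtimes_{\widehat{\delta_L}}G$, respectively in the reduced double crossed product, whose corner is canonically $B\rtimes_{\delta_B}\widehat{G}\rtimes_{\widehat{\delta_B}}G$, and similarly for $1-p$ and $C$. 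Reading off the defining formula for the Katayama map one checks that $\Phi_L(j_L(p))=p\otimes 1$ and that $\Phi_L$ restricts on the $p$\nb-corner to $\Phi_B$ and on the $(1-p)$\nb-corner to $\Phi_C$.

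Now a surjective \Star{}homomorphism that maps a full projection onto a full projection is an isomorphism precisely when its restriction to the corresponding corners is (since ideals of $L$ correspond to ideals of the corner $pLp$ when $p$ is full). Applying this to $\Phi_L$ shows that $\Phi_B$ is injective precisely when $\Phi_L$ is, precisely when $\Phi_C$ is, that is, $\delta_B$ is maximal precisely when $\delta_C$ is. Running the same argument for the reduced double crossed products --- equivalently, using that the full\nb-to\nb-reduced quotient maps are compatible with the corner structure, so that the equality $\ker\Phi_L=\ker(q_r^L)$ is detected on the $p$\nb-corner --- shows likewise that $\delta_B$ is normal precisely when $\delta_C$ is.

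For the factorization, recall from Quigg that $\delta_L$ has a normalization $(L_n,\delta_{L_n})$ with $L_n=L/\ker j_L$. Since coaction crossed products are functorial for the nondegenerate corner inclusions $B\into L$ and $C\into L$, the map $j_L$ restricts to $j_B$ on the corner $B$ and to $j_C$ on the corner $C$, so $\ker j_L$ meets these corners in $N_B:=\ker j_B$ and $N_C:=\ker j_C$. Hence $L_n$ is again a linking algebra, now with full diagonal corners $B_n=B/N_B$ and $C_n=C/N_C$ and off\nb-diagonal corner a quotient $\E_n$ of $\E$; and $\delta_{L_n}$, being determined on the corners by $\delta_{B_n}$ and $\delta_{C_n}$, restricts to a coaction $\delta_{\E_n}$ on $\E_n$ implementing a Morita equivalence between $(B_n,\delta_{B_n})$ and $(C_n,\delta_{C_n})$. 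This is the asserted factorization.

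I do not expect a genuine obstacle: beyond quoting the standard dictionary between linking algebras, coaction crossed products and Quigg's normalization, the only points to verify are that the Katayama map $\Phi_L$ and the map $j_L$ are compatible with cutting down to a full corner --- immediate from their explicit descriptions --- and that fullness of a corner together with invariance of the corner projection is preserved under the crossed\nb-product and quotient constructions, which is routine.
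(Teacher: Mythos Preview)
Your proposal is correct and is precisely the linking algebra argument the paper has in mind: the paper's own proof consists of the single sentence ``This follows from an easy linking algebra argument which we omit.'' You have supplied that omitted argument in full, and the details you sketch (compatibility of $\Phi_L$ and $j_L$ with full corners, and the fact that injectivity of a surjection can be tested on a full corner) are the standard ones.
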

\begin{proof} This follows from an easy linking algebra argument which we omit.
\end{proof}

In the following lemma we allow a slightly more general situation than what we really need in this section, namely
we assume that a closed subgroup $H$ of $G$ is given and consider a $G\rtimes H$-algebra $A$, \ie, a \cstar{}algebra $A$ endowed with an $H$-action $\alpha$ and an $H$-equivariant nondegenerate \Star{}homomorphism $\phi\colon \contz(G)\to \M(A)$ (where $\contz(G)$ is now endowed with right translation $H$-action $\r$). In addition, we assume that $\|\cdot\|_\pn$ is a \cstar{}norm on $\contc(H,A)$ (between $\|\cdot \|_u$ and $\|\cdot \|_\red$) such that the dual coaction $\widehat{\alpha}: A\rtimes_{\alpha,u} H\to \M(A\rtimes_{\alpha,u}H\otimes \Cst(H))$
factors through a coaction
$$\widehat{\alpha}_\pn:A\rtimes_{\alpha,\pn}H\to \M(A\rtimes_{\alpha,\pn}H\otimes \Cst(H)).$$
We then consider the \emph{inflated} $G$\nb-coaction $\Inf\widehat{\alpha}_\pn\defeq (\Id\otimes \iota_{G,H})\circ\widehat{\alpha}_\pn\colon A\rtimes _{\alpha,\pn}H\to \M(A\rtimes_{\alpha,\pn}H\otimes \Cst(G))$, where $\iota_{G,H}\colon \Cst(H)\to \M(\Cst(G))$ denotes the canonical homomorphism defined as the integrated form of the obvious representation $H\to U\M(\Cst(G))$ sending $t\in H$ to $u_t\in U\M(\Cst(G))$ (see \cite{Echterhoff-Kaliszewski-Quigg-Raeburn:Categorical} for more information on inflated coactions).
This extra generality will be used in \S 5 of the forthcoming paper \cite{Buss-Echterhoff:Mansfield}.

\begin{lemma}\label{lem-wg}
Let $(A,\alpha)$ be a $G\rtimes H$-algebra as above, and let $\F_\pn(A)$ denote the corresponding Hilbert $A\rtimes_{\alpha,\pn}H$-module.
Then there is a canonical Hilbert-module coaction $\delta_{\F_\pn(A)}:\F_\pn(A)\to \M(\F_\pn(A)\otimes \Cst(G))$ compatible
with $\Inf\widehat{\alpha}_\pn$ which is given, for $\xi\in \F_c(A)=\phi(\contc(G))A$, by the formula
$$\delta_{\F_\pn(A)}(\xi)=\phi\otimes \id_G(w_G)(\xi\otimes 1).$$
Moreover, via the isomorphism $\F_\pn(A)\cong \F(G)\otimes_{\contz(G)\rtimes H} A\rtimes_{\alpha,\pn}H$ of Proposition~\ref{prop-factor},
the coaction $\delta_{\F_\pn(A)}$ corresponds to the \textup(balanced\textup) tensor product \textup(as defined in \cite{Echterhoff-Kaliszewski-Quigg-Raeburn:Categorical}*{Proposition~2.13}\textup) of the coactions $\delta_{\F(G)}$ and $\Inf\dual\alpha$, where $\delta_{\F(G)}$ denotes the coaction of $G$ on $\F(G)=\F(\contz(G))$ given by $\delta_{\F(G)}(f)=\omega_G(f\otimes 1)$ for all $f\in \contc(G)$.
\end{lemma}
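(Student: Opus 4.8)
The plan is to verify, in order: (i) that the formula $\delta_{\F_\pn(A)}(\xi)=\phi\otimes\id_G(w_G)(\xi\otimes 1)$ genuinely lands in the multiplier module $\M(\F_\pn(A)\otimes\Cst(G))$ and is isometric for the appropriate inner products, (ii) that it is compatible with $\Inf\widehat\alpha_\pn$, (iii) that the coaction identity holds, and finally (iv) that under the isomorphism of Proposition~\ref{prop-factor} it corresponds to the balanced tensor product of $\delta_{\F(G)}$ and $\Inf\widehat\alpha$; the last point will in fact give the cleanest route to (i)--(iii). First I would observe that $\phi\otimes\id_G(w_G)$ is a unitary multiplier of $A\otimes\Cst(G)$ by the standing assumption that $\phi$ is nondegenerate (so $\phi\otimes\id_G$ extends to multipliers and sends the unitary $w_G\in U\M(\contz(G)\otimes\Cst(G))$ to a unitary). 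For $\xi\in\F_c(A)=\phi(\contc(G))A$, the element $\xi\otimes 1$ lies in $\M(A\otimes\Cst(G))$ and acts as an adjointable operator $A\otimes\Cst(G)\to\F_\pn(A)\otimes\Cst(G)$; composing with the unitary $\phi\otimes\id_G(w_G)$ keeps us in $\L_{A\rtimes_{\alpha,\pn}H\otimes\Cst(G)}(A\rtimes_{\alpha,\pn}H\otimes\Cst(G),\F_\pn(A)\otimes\Cst(G))=\M(\F_\pn(A)\otimes\Cst(G))$, once we know $A\rtimes_{\alpha,\pn}H\otimes\Cst(G)$ is the right coefficient algebra --- but this is exactly the content of the isomorphism $(A\rtimes_{\alpha,\pn}H)\otimes\Cst(G)\cong(A\otimes\Cst(G))\rtimes_{\alpha\otimes\id,\qn}H$ of Lemma~\ref{lem-tensor} applied with $D=\Cst(G)$, together with the observation that the dual coaction $\widehat\alpha_\pn$ (inflated to $G$) is precisely the map implementing this identification on the subalgebra level.

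For the isometry and compatibility, I would compute the $B_0$-valued inner product directly on $\xi,\eta\in\F_c(A)$. Using the defining formula $\bbraket{\xi}{\eta}_{B_0}(t)=\Delta(t)^{-1/2}\xi^*\alpha_t(\eta)$ together with the covariance-type relation $w_G^*(1\otimes u_t)w_G=$ (the multiplier encoding $\r_t$ on $\contz(G)$), one sees that
$$\Braket{\delta_{\F_\pn(A)}(\xi)}{\delta_{\F_\pn(A)}(\eta)}=\widehat\alpha_\pn\bigl(\bbraket{\xi}{\eta}_{B_0}\bigr),$$
inflated to $G$, which is the statement that $\delta_{\F_\pn(A)}$ is a coaction-compatible isometry over $\Inf\widehat\alpha_\pn$. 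Here the key algebraic fact is that conjugation by $w_G$ intertwines $\alpha_t\otimes\id$ with the combined action on $A\otimes\Cst(G)$ coming from the dual coaction, which in turn comes from the $G$-equivariance of $\phi$: $\tau_t\circ\phi=\alpha_t^{-1}$-twisted, i.e. $\phi\circ\r_t = (\text{conjugation by }w_G)$ at the multiplier level. The coaction identity $(\id\otimes\delta_G)\circ\delta_{\F_\pn(A)}=(\delta_{\F_\pn(A)}\otimes\id)\circ\delta_{\F_\pn(A)}$ then reduces, after cancelling the $\xi\otimes1\otimes1$ factor, to the identity $(\id\otimes\delta_G)(w_G)=(w_G)_{13}(w_G)_{12}$ in $U\M(\contz(G)\otimes\Cst(G)\otimes\Cst(G))$, which is the standard pentagon/cocycle relation for the canonical unitary $w_G$ and is already built into the definition of $\delta_G$.

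For the final clause, I would check the statement on elementary tensors $f\otimes\varphi\in\contc(G)\odot\contc(H,A)$. The isomorphism $\Psi_\pn$ of Proposition~\ref{prop-factor} sends $f\otimes\varphi$ to $\int_H\Delta(t)^{-1/2}\alpha_t(f\cdot\varphi(t^{-1}))\,dt\in\F_c(A)$; the balanced tensor product coaction $\delta_{\F(G)}\otimes_{\text{bal}}\Inf\widehat\alpha_\pn$ of Echterhoff--Kaliszewski--Quigg--Raeburn sends it to $\delta_{\F(G)}(f)\otimes\widehat\alpha_\pn(\varphi)$ suitably interpreted, and applying $\Psi_\pn\otimes\id_G$ to the latter one recovers $\phi\otimes\id_G(w_G)(\Psi_\pn(f\otimes\varphi)\otimes1)$ using $\delta_{\F(G)}(f)=\omega_G(f\otimes1)$ and the fact that $\phi\rtimes_\pn H$ intertwines $\omega_G$ with $\phi\otimes\id_G(w_G)$ --- which is just functoriality of the $w$-twist under the nondegenerate homomorphism $\phi$. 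The main obstacle, and the only place demanding genuine care rather than bookkeeping, is point (i): making precise that $\phi\otimes\id_G(w_G)(\xi\otimes1)$ really is a \emph{multiplier} of the correct module $\F_\pn(A)\otimes\Cst(G)$ with coefficient algebra $A\rtimes_{\alpha,\pn}H\otimes\Cst(G)$ --- i.e. that the norm on $\contc(H,A\otimes\Cst(G))$ induced from the formula $\bbraket{\delta_{\F_\pn(A)}(\xi)}{\delta_{\F_\pn(A)}(\eta)}$ is exactly the norm $\qn$ of Lemma~\ref{lem-tensor} and hence factors through $\widehat\alpha_\pn$ rather than being stuck at the universal level; this is precisely where the hypothesis that $\widehat\alpha_u$ descends to $\widehat\alpha_\pn$ is used, and once it is in place the rest is the chain of computations above.
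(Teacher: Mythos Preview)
Your sketch is correct and follows essentially the same route as the paper: use Lemma~\ref{lem-tensor} with $D=\Cst(G)$ to interpret $\phi\otimes\id_G(w_G)(\xi\otimes z)$ inside $\F_c(A\otimes\Cst(G))\subseteq\F_\pn(A)\otimes\Cst(G)$, compute the inner product directly to obtain compatibility with $\Inf\widehat\alpha_\pn$, reduce the coaction identity to the standard relation $(\id\otimes\delta_G)(w_G)=(w_G)_{12}(w_G)_{13}$, and verify the tensor-product description on elementary tensors via $\Psi_\pn$. Two small points: your description of step~(i) is slightly imprecise---$\phi\otimes\id_G(w_G)$ is a multiplier of $A\otimes\Cst(G)$, not an operator on $\F_\pn(A)\otimes\Cst(G)$, so the paper instead computes $\phi\otimes\id_G(w_G)(\xi\otimes z)$ in $\M(A\otimes\Cst(G))$ and shows it lands in $\F_c(A\otimes\Cst(G))$ before invoking Lemma~\ref{lem-tensor}; and you omit the nondegeneracy check $\cspn\bigl(\delta_{\F_\pn(A)}(\F_\pn(A))(1\otimes\Cst(G))\bigr)=\F_\pn(A)\otimes\Cst(G)$, which the paper obtains from the same observation that $w_G(f\otimes z)\in\contc(G,\Cst(G))$.
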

\begin{proof}
We first need to check that the right hand side of the equation makes sense, \ie, that
$\phi\otimes \id_G(w_G)(\xi\otimes 1)$ determines an adjointable operator from
$A\rtimes_{\alpha,\pn}H\otimes \Cst(G)$ to $\F_\pn(A)\otimes \Cst(G)$.
For this we first observe that for any $z\in \Cst(G)$ we get (with $\pn$ and $\qn$ as in Lemma~\ref{lem-tensor} applied to $X=G$ and $H$ in place of $G$):
$$\phi\otimes \id_G(w_G)(\xi\otimes z)\in \F_c(A\otimes \Cst(G))\subseteq \F_\qn(A\otimes \Cst(G))\cong \F_\pn(A)\otimes \Cst(G).$$
Indeed, writing $\xi=\phi(f) a$ with $f\in \contc(X)$ and $a\in A$, we get
\begin{equation}\label{eq-FAc}
\phi\otimes \id_G(w_G)(\xi\otimes z)=\phi\otimes \id_G(w_G(f\otimes z))(a\otimes 1)\in \F_c(A\otimes \Cst(G)),
\end{equation}
since $w_G(f\otimes z)\in \contc(G, \Cst(G))$. Suppose now that $w\in A\rtimes_{\alpha,\pn}H\otimes \Cst(G)$.
Factorizing $w=(1\otimes z )w'$ with $z\in \Cst(G)$ and $w'\in A\rtimes_{\alpha,\pn}H\otimes \Cst(G)$, gives
$$\delta_{\F_\pn(A)}(\xi)\cdot w=\phi\otimes \id_G(w_G)(\xi\otimes z)\cdot w'\in \F_\pn(A)\otimes \Cst(G).$$
It is straightforward to check that this does not depend on the
given factorization $w=(1\otimes z)w'$ and that $w\mapsto \delta_{\F_\pn(A)}(\xi)\cdot w$ is
adjointable with adjoint given by the formula
$$\delta_{\F_\pn(A)}(\xi)^*(\eta)=\bbraket{\phi\otimes \id_G((1\otimes v^*)w_G)(\xi\otimes 1)}{\eta'}_{A\rtimes_{\alpha,\pn}H\otimes \Cst(G)}$$
if we factorize $\eta\in \F_\pn(A)\otimes \Cst(G)$ as $\eta=(1\otimes v)\eta'$ for some $v\in \Cst(G)$. One
may check, as above, that $\phi\otimes \id_G((1\otimes v^*)w_G)(\xi\otimes 1)\in \F_c(A\otimes \Cst(G))$, so
that the inner product makes sense and gives an element in $(A\otimes \Cst(G))\rtimes_{\alpha\otimes\Id,\qn}H\cong (A\rtimes_{\alpha,\pn}H)\otimes \Cst(G)$.

Note that it follows easily from (\ref{eq-FAc}) that
$\delta_{\F_\pn(A)}(\F_c(A))(1\otimes \Cst(G))$ is dense in $\F_c(A\otimes \Cst(G))$ in the inductive limit topology
and hence is dense in $\F_\pn(A)\otimes \Cst(G)$. We now compute, for all $z,v\in \Cst(G)$,
$\xi=\phi(f)a$, $\eta=\phi(g)b\in \F_c(A)$ and $t\in H$:
\begin{align*}
&(1\otimes z^*)\bbraket{\delta_{\F_\pn(A)}(\xi)}{\delta_{\F_\pn(A)}(\eta)}_{\M(A\rtimes_\pn G\otimes \Cst(G))}(1\otimes v)|_t\\
&=\bbraket{\delta_{\F_\pn(A)}(\xi)(1\otimes z)}{\delta_{\F_\pn(A)}(\eta)(1\otimes v)}_{A\rtimes_\pn G\otimes \Cst(G)}|_t\\
&=\Delta(t)^{-1/2} \big(\delta_{\F_\pn(A)}(\xi)(1\otimes z)\big)^*\alpha_t\otimes \id_G\big(\delta_{\F_\pn(A)}(\eta)(1\otimes v)\big)\\
&=\Delta(t)^{-1/2} \Big(\phi\otimes \id_G(w_G)(\xi\otimes z)\Big)^*\alpha_t\otimes \id_G\Big(\phi\otimes \id_G(w_G)(\eta\otimes v)\Big)\\
&=\Delta(t)^{-1/2}(a^*\otimes z^*)\Big(\phi\otimes \id_G\big((\bar{f}\otimes 1)w_G^*\big)\Big)\alpha_t\otimes \id_G\Big(\phi\otimes \id_G\big(w_G(g\otimes 1)\big)(b\otimes v)\Big)\\
&=\Delta(t)^{-1/2}(a^*\otimes z^*)\bigg(\phi\otimes \id_G\Big(\big((\bar{f}\otimes 1)w_G^*\big)\r_t\otimes \id_G\big(w_G(g\otimes 1)\big)\Big)\bigg)(\alpha_t(b)\otimes v)=...
\end{align*}
Now observe that the middle part $\big((\bar{f}\otimes 1)w_G^*\big)\r_t\otimes \id_G
\big(w_G(g\otimes 1)\big)$ is the function in $\contc^{\st}(G, \M(\Cst(G)))$ given by
$$s\mapsto \bar{f}(s)w_G^*(s)w_G(st) g(st)=\bar{f}(s) u_s^* u_{st} g(st)=(\bar{f}\r_t(g))(s)u_t.$$
Hence we can proceed the above computation with
\begin{align*}
&...= \Delta(t)^{-1/2}(1\otimes z^*)\big((a^*\cdot \bar{f})\alpha_t(g\cdot b)\otimes u_t\big)(1\otimes v)\\
&=(1\otimes z^*)\big(\bbraket{\xi}{\eta}_{A\rtimes_{\pn}G}(t) \otimes u_t\big)(1\otimes v)\\
&=(1\otimes z^*)\Inf\widehat{\alpha}_\pn\big(\bbraket{\xi}{\eta}_{A\rtimes_\pn G}\big)(1\otimes v)|_t.
\end{align*}
Since $z,v\in \Cst(G)$ have been arbitrary, we see that
$$\bbraket{\delta_{\F_\pn(A)}(\xi)}{\delta_{\F_\pn(A)}(\eta)}_{\M(A\rtimes_\pn G\otimes \Cst(G))}=
\Inf\widehat{\alpha}_\pn(\bbraket{\xi}{\eta}_{A\rtimes_\pn G})$$
for all $\xi,\eta\in \F_c(A)$. Since $\Inf\widehat{\alpha}_\pn$ is isometric, the same follows
for $\delta_{\F_\pn(A)}$ with respect to the norm on $\F_\pn(A)$, so we see that
$\delta_{\F_\pn(A)}$ extends uniquely to an isometric map
$$\delta_{\F_\pn(A)}: \F_\pn(A)\to \M(\F_\pn(A)\otimes \Cst(G))$$
such that $\cspn\big(\delta_{\F_\pn(A)}(\F_\pn(A))(1\otimes \Cst(G))\big)=\F_\pn(A)\otimes \Cst(G)$.

In order to complete the proof of the first part of the lemma, we only need to show that $\delta_{\F_\pn(A)}$ satisfies
the coaction identity
$$(\delta_{\F_\pn(A)}\otimes \id_G)\circ \delta_{\F_\pn(A)}=(\id_{\F_\pn(A)}\otimes \delta_G)\circ \delta_{\F_\pn(A)}.$$
For this let $\xi=\phi(f)a\in \F_c(A)$ and $z\in C^*(G)$. As explained above, the element
$$x\defeq \delta_\F(\xi)(1\otimes z)=\phi\otimes\id(\omega_G)(\xi\otimes z)\in \F_c(A\otimes C^*(G))\sbe \F_\qn(A\otimes C^*(G))$$
is viewed as an element in $\F_\pn(A)\otimes C^*(G)$ via the canonical isomorphism $\F_\qn(A\otimes C^*(G))\cong \F_\pn(A)\otimes C^*(G)$. For such elements we have
$$(\delta_{\F_\pn(A)}\otimes\id_G)(x)=(\phi\otimes\id_G(\omega_G)\otimes 1)\big(\id_\F\otimes \sigma(x\otimes 1)\big),$$
where $\sigma$ denotes the flip map on $C^*(G)\otimes C^*(G)$ and we use $\phi\otimes\id(\omega_G)(\xi\otimes z)$ in $\F_c(A\otimes C^*(G))\sbe \M(A\otimes C^*(G))$. Indeed, this assertion follows from continuity of the involved maps and the fact that $x$ can be approximated, in the inductive limit topology, by elementary tensors of the form $\eta\otimes y\in \F_c(A)\otimes C^*(G)$. Moreover, using the relation $(\id_{C_0(G)} \otimes\delta_G)(\omega_G)=(\omega_G\otimes 1)\big(\id_{C_0(G)}\otimes\sigma(\omega_G\otimes 1)\big)$, we obtain
\begin{align*}
(\delta_{\F_\pn(A)}&\otimes\id_G)\big(\delta_{\F_\pn(A)}(\xi)(1\otimes z)\big)\\
&=\big(\phi\otimes\id_G(w_G)\otimes 1\big) \Big(\id_A \otimes \sigma\big(\phi\otimes\id_G(w_G)(\xi\otimes z\otimes 1)\big)\Big)\\
&=\big(\phi\otimes\id_G(w_G)\otimes 1\big)\Big(\id_A \otimes \sigma\big(\phi\otimes\id_G(w_G)\big)\Big)(\xi\otimes 1\otimes z)\\
&=\big(\phi\otimes\id_G(w_G)\otimes 1\big)\Big(\phi\otimes\id_G\otimes\id_G\big(\id_{C_0(G)}\otimes \sigma(\omega_G\otimes 1)\big)\Big)(\xi\otimes 1\otimes z)\\
&=\big(\phi\otimes\id_G\otimes \id_G\big)\big((\omega_G\otimes 1)\big(\id_{C_0(G)}\otimes \sigma(\omega_G\otimes 1)\big)\big)(\xi\otimes 1\otimes z)\\
&=\big(\phi\otimes\id_G\otimes \id_G\big)\big(\id_{C_0(G)}\otimes \delta_G(\omega_G)\big)(\xi\otimes 1\otimes z)\\
&=\big(\id_\F\otimes\delta_G\big)\big(\phi\otimes\id_G(\omega_G)\big)(\xi\otimes 1\otimes z)\\
&=\big(\id_\F\otimes\delta_G\big)\big(\delta_\F(\xi)(1\otimes z)\big).
\end{align*}
Since $z\in C^*(G)$ was arbitrary, this gives the co-associativity of $\delta_\F$ and hence completes the proof of the first part of the lemma. For the final part, let us denote by
$\Psi_\pn\colon \F(G)\otimes_{\contz(G)\rtimes H} A\rtimes_{\alpha,\pn}H\congto \F_\pn(A)$ the isomorphism of Proposition~\ref{prop-factor} given by the formula $\Psi_\pn(f\otimes\varphi)=\int_H\Delta_H(t)^{-1/2}\alpha_t(f\cdot \varphi(t^{-1}))\dd{t}$ for all $f\in \contc(G)$ and $\varphi\in\contc(H,A)$. The tensor product of the coactions $\delta_{\F(G)}$ and $\Inf\dual\alpha_\pn$ will be denoted by $\tilde\delta$.
It is given by $\tilde\delta(f\otimes \varphi)=\theta(\delta_{\F(G)}(f)\otimes \Inf\dual\alpha_\pn(\varphi))$, where
\begin{multline*}
\theta\colon \big(\F(G)\otimes C^*(G)\big)\otimes_{\contz(G)\rtimes H\otimes C^*(G)}\big(A\rtimes_{\alpha,\pn}H\otimes C^*(G)\big)\\
\congto \big(\F(G)\otimes_{\contz(G)\rtimes H}A\rtimes_{\alpha,\pn}H\big)\otimes C^*(G)
\end{multline*}
denotes the canonical isomorphism. We have
\begin{align*}
(\Psi_\pn\otimes\id)\big(&\tilde\delta(f\otimes\varphi)\big)\\
    &=\int_H\Delta_H(t)^{-1/2}(\alpha_t\otimes\id)\big((\phi\otimes\id)(\omega_G(f\otimes 1))(\varphi(t^{-1})\otimes u_{t^{-1}})\big)\dd{t}\\
    &=\int_H\Delta_H(t)^{-1/2}(\phi\otimes\id)(\omega_G)(\phi(\tau_t(f))\otimes u_t)(\alpha_t(\varphi(t^{-1}))\otimes u_{t^{-1}})\dd{t}\\
    &=\int_H\Delta_H(t)^{-1/2}(\phi\otimes\id)(\omega_G)(\alpha_t(f\cdot\varphi(t^{-1}))\otimes 1)\dd{t}\\
    &=\delta_{\F_\pn(A)}\big(\Psi_\pn(f\otimes\varphi)\big).
\end{align*}
This shows that $\Psi_\pn$ is equivariant with respect to the coactions $\tilde\delta$ and $\delta_{\F_\pn(A)}$ and hence finishes the proof of the last statement in the lemma.
\end{proof}

\begin{remark}\label{rem-formulacoact}
It is useful to obtain an explicit formula for the coaction $\delta_\pn:B_\pn\to \M(B_\pn\otimes \Cst(G))$
on $B_\pn:=A_\pn^{H,\alpha}$ which is determined by the coaction $\delta_{\F_\pn(A)}$ of Lemma \ref{lem-wg}.
We claim that it is given on the dense subalgebra $A_c^{H,\alpha}$ by the formula
\begin{equation}\label{eq-deltap}
\delta_\pn(m)=\phi\otimes \id_G(w_G) (m\otimes 1) \phi \otimes \id_G(w_G^*)
\end{equation}
where we perform the formal computation inside $\M(A\otimes \Cst(G))$ but the outcome
can be regarded as an element in $\M(B_\pn\otimes \Cst(G))$ since $\delta_\pn(m)$ (for $m\in A_c^{H,\alpha}$)
multiplies elements
of $1\otimes \Cst(G)$ into $(A\otimes \Cst(G))^{G,\alpha\otimes \id}_c\subseteq B_\pn\otimes \Cst(G)$
(use Lemma \ref{lem-tensor}).
For a proof of (\ref{eq-deltap}), recall from Lemma \ref{lem-left-inner-product} that $A_c^{H,\alpha}=\EE(A_c)$ (where $\EE(a)=\int_H^{\st}\alpha_t(a)\dd{t}$)
with $A_c=\contc(G)\cdot A\cdot \contc(G)=\F_c(A)\cdot \F_c(A)^*$. Hence we get
$$A_c^{H,\alpha}=\EE(\F_c(A)\cdot \F_c(A)^*)={_{A_c^H}\bbraket{\F_c(A)}{\F_c(A)}}.$$
Thus we find $\xi,\eta\in \F_c(A)$ such that $m=\EE(\xi\eta^*)={_{A_\pn^G}\bbraket{\xi}{\eta}}$ and hence
\begin{align*}
\delta_\pn(m)&=\delta_\pn\big({_{A_\pn^G}\bbraket{\xi}{\eta}}\big)
={_{\M(A_\pn^G\otimes \Cst(G))}\bbraket{\delta_{\F_\pn(A)}(\xi)}{\delta_{\F_\pn(A)}(\eta)}}\\
&={_{\M(A_\pn^G\otimes \Cst(G))}\bbraket{\phi\otimes \id_G(w_G)(\xi\otimes 1)}{\phi\otimes \id_G(w_G)(\eta\otimes 1)}}\\
&=\int_H^{\st} \alpha_t\otimes \id_G\big(\phi\otimes \id_G(w_G)(\xi\eta^*\otimes 1)\phi\otimes \id_G(w_G^*)\big)\,dt.
\end{align*}
For fixed $t\in H$, we have $ \alpha_t\otimes \id_G\big(\phi\otimes \id_G(w_G)\big)=\phi\otimes \id_G\big(\r_t\otimes\id_G(w_G)\big)$ and
$\r_t\otimes\id_G(w_G)$ is the function $s\mapsto u_{st}=u_su_t$. Thus
$\alpha_t\otimes \id_G\big(\phi\otimes \id_G(w_G)\big)=\phi\otimes \id_G(w_G)(1\otimes u_t)$.
Using this identity, we get
\begin{align*}
\delta_\pn(m)&=
\int_H^{\st} \big(\phi\otimes \id_G(w_G)\big(\alpha_t(\xi\eta^*)\otimes u_t u_t^*)\phi\otimes \id_G(w_G^*)\big)\,dt\\
&= \phi\otimes \id_G(w_G)\left(\int_H^{\st}\alpha_t(\xi\eta^*)\,dt \otimes 1\right)\phi\otimes \id_G(w_G^*)\\
&=\phi\otimes \id_G(w_G)(m\otimes 1)\phi\otimes \id_G(w_G^*)
\end{align*}
We should point out that a similar formula as in~\eqref{eq-deltap}
is given for the reduced case in \cite[Theorem 4.1]{Kaliszewski-Quigg-Raeburn:ProperActionsDuality}.
\end{remark}

We now return to the our original situation where $H=G$ and we use the above lemma in this case to prove the main result of this section:

\begin{proof}[Proof of Theorem \ref{thm-landstad}]
By Lemma \ref{lem-wg} (applied to $H=G$) and the above remark we obtain a coaction $\delta_\pn$ on $B_\pn:=A^{G,\alpha}_\pn$
such that $(\F_\pn(A),\delta_{\F_\pn(A)})$ implements a Morita equivalence between $(B_\pn,\delta_\pn)$ and
$(A\rtimes_{\alpha,\pn}G, \widehat{\alpha}_\pn)$. Hence we get (1).

Statement (2) follows from Lemma \ref{lem-max} together with the fact that $\widehat{\alpha}_u$
is the maximalization of $\widehat{\alpha}_\pn$ and $\widehat{\alpha}_r$ is the normalization of $\widehat{\alpha}_\pn$.

In order to prove (3) we first check that $(k,\phi)$ is a covariant homomorphism of $(B_\pn, G,\delta_\pn)$. In fact, for $m\in A_c^{G,\alpha}$ we have
\begin{align*}
(k\otimes \id_G)\circ \delta_\pn(m)&=
\phi\otimes \id_G(w_G) (k(m)\otimes 1) \phi \otimes \id_G(w_G^*),
\end{align*}
which implies covariance of $(k,\phi)$. We also have
$$k(B_\pn)\phi(\contz(G))\supseteq A_c^{G,\alpha}\phi(\contz(G)),$$
which is dense in $A$ by \cite[Lemma 3.10 (2)]{Quigg:Landstad_duality}, hence
$A=k\rtimes\phi(B_\pn\rtimes_{\delta_\pn}\widehat{G})$.
Note that $k\rtimes\phi:B_\pn\rtimes_{\delta_{\pn}}G\to A$ is $G$\nb-equivariant, since
\begin{align*}
k\rtimes\phi\big(\widehat{\delta}_\pn(s)(j_B(b)j_{\contz(G)}(f))\big)&=
k\rtimes\phi\big(j_B(b)j_{\contz(G)}(\r_s(f)))\big)\\
&=k(b)\phi(\r_s(f))=\alpha_s(k(b)\phi(f))\\
&=\alpha_s\big(k\rtimes\phi(j_B(b)j_{\contz(G)}(f))\big).
\end{align*}
To see that $k\rtimes \phi:B_\pn\rtimes_{\delta_\pn}\widehat{G}\to A$ is an isomorphism we
use the fact that the crossed product by a coaction is always isomorphic to the crossed product by
its normalization. Moreover, $(k,\phi)$ factors through the covariant homomorphism
$(\id, \phi)$ of $(B_r,G, \delta_r)$ and it follows then from the $G$\nb-equivariance
checked above and \cite[Proposition 3.1]{Quigg:Landstad_duality} that
$\id\rtimes\phi: B_r\rtimes_{\delta_r}\widehat{G}\to A$ is an isomorphism.

We finally have to show that $(B_\pn,\delta_\pn)$ satisfies $\pn$-duality. For this we have to show that
the canonical map $\Phi_{B_\pn}: B_\pn\rtimes_{\delta_\pn}\widehat{G}\rtimes_{\widehat\delta_\pn,u}G\to B_\pn\otimes \K(L^2G)$
factors through an isomorphism $B_\pn\rtimes_{\delta_\pn}\widehat{G}\rtimes_{\widehat\delta_\pn,\pn}G\cong B_\pn\otimes \K(L^2G)$.
Since $\delta_u$ is the maximalization of $\delta_\pn$, we have an isomorphism
$$\Phi_{B_u}:B_\pn\rtimes_{\delta_\pn}\widehat{G}\rtimes_{\widehat\delta_\pn,u}G\cong A\rtimes_{\alpha} G\congto B_u\otimes \K(L^2G).$$
Combining the Morita equivalence $B_u\sim_M A\rtimes_{\alpha}G$ with this isomorphism,
we obtain a Morita equivalence $B_u\sim_M B_u\otimes \K(L^2G)$ given by the equivalence
bimodule $\F_\un(A)\otimes_{A\rtimes_{\alpha}G}\big(B_u\otimes\K(L^2G)\big)$. To finish, we need:

\begin{lemma}\label{lem-Mor}
The $B_u-B_u\otimes \K(L^2G)$-equivalence bimodule $\F_\un(A)\otimes_{A\rtimes_{\alpha}G}\big(B_u\otimes\K(L^2G)\big)$
is isomorphic to $B_u\otimes \F(G)$, where we regard $\F(G)$ as a $\C-\K(L^2G)$ equivalence bimodule
with respect to the isomorphism $M\rtimes \rho: \contz(G)\rtimes G\to \K(L^2G)$.
\end{lemma}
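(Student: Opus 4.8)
The plan is to reduce everything to the factorization of $\F_\un(A)$ from Proposition~\ref{prop-factor} together with the explicit form of the Katayama map $\Phi_{B_u}$, using associativity of the internal tensor product of correspondences. Since $G$ acts freely on itself by right translation, Proposition~\ref{prop-factor} (applied with $X=G$) makes $\F_\un(A)$ a genuine $B_u$--$(A\rtimes_\alpha G)$ imprimitivity bimodule with $\F_\un(A)\cong\F(G)\otimes_{\contz(G)\rtimes G}(A\rtimes_\alpha G)$, where $\contz(G)\rtimes G$ acts on the left of $A\rtimes_\alpha G$ through $\phi\rtimes G$. Substituting this and collapsing the identity $(A\rtimes_\alpha G)$-bimodule, I would first obtain
\begin{align*}
\F_\un(A)\otimes_{A\rtimes_\alpha G}\big(B_u\otimes\K(L^2G)\big)
&\cong \F(G)\otimes_{\contz(G)\rtimes G}(A\rtimes_\alpha G)\otimes_{A\rtimes_\alpha G}\big(B_u\otimes\K(L^2G)\big)\\
&\cong \F(G)\otimes_{\contz(G)\rtimes G}\big(B_u\otimes\K(L^2G)\big),
\end{align*}
where now $\contz(G)\rtimes G$ acts on the left of $B_u\otimes\K(L^2G)$ through the composite $\Phi_{B_u}\circ(\phi\rtimes G)$.

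The key step is to identify this left action. Here $A=B_u\rtimes_{\delta_u}\widehat G$, the structure map is $\phi=j_{\contz(G)}$, and by~\eqref{eq-dualhom} the map $\Phi_{B_u}=\pi\rtimes U$ is the integrated form of the covariant pair with $\pi\circ j_{\contz(G)}=1_{B_u}\otimes M$ and $U=1_{B_u}\otimes\rho$. Writing $\phi\rtimes G=(\iota_A\circ\phi)\rtimes\iota_G$ and using that $\Phi_{B_u}\circ\iota_A=\pi$ and $\Phi_{B_u}\circ\iota_G=U$, I would compute
$$\Phi_{B_u}\circ(\phi\rtimes G)=(\pi\circ\phi)\rtimes U=(1_{B_u}\otimes M)\rtimes(1_{B_u}\otimes\rho)=1_{B_u}\otimes(M\rtimes\rho).$$
Under the identification $\contz(G)\rtimes G\cong\K(L^2G)$ afforded by $M\rtimes\rho$, this left action becomes exactly $1_{B_u}\otimes\id_{\K(L^2G)}$.

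Finally, with the left action in this product form, I would invoke the external–tensor–product isomorphism
$$\F(G)\otimes_{\K(L^2G)}\big(B_u\otimes\K(L^2G)\big)\cong B_u\otimes\big(\F(G)\otimes_{\K(L^2G)}\K(L^2G)\big)\cong B_u\otimes\F(G),$$
implemented on elementary tensors by $y\otimes(b\otimes c)\mapsto b\otimes(y\cdot c)$. A short check shows this map is balanced over $\K(L^2G)$, preserves the $B_u\otimes\K(L^2G)$-valued inner products, and has dense range, hence extends to an isomorphism of Hilbert $B_u\otimes\K(L^2G)$-modules. Composing the three steps then yields $\F_\un(A)\otimes_{A\rtimes_\alpha G}\big(B_u\otimes\K(L^2G)\big)\cong B_u\otimes\F(G)$.

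The part I expect to require the most care is the bookkeeping of the \emph{left}-module structures: each module appearing in the chain is a priori only a right Hilbert module, and I must verify that the left $B_u$-action, which arises as $\K(\F_\un(A))$ on the source and as $\K(B_u\otimes\F(G))=B_u$ (acting by left multiplication on the first tensor factor, since $\K(\F(G))=\C$) on the target, is transported correctly, so that the final map is an isomorphism of $B_u$--$(B_u\otimes\K(L^2G))$ imprimitivity bimodules and not merely of right Hilbert modules. This amounts to checking that each isomorphism above intertwines the respective compact-operator algebras, which is automatic for an isometric surjection of Hilbert modules once the two left actions are recognized as these $\K$-algebras under the canonical identifications.
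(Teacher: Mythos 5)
Your proposal is correct and takes essentially the same route as the paper's proof: both rest on the factorization $\F_\un(A)\cong\F(G)\otimes_{\contz(G)\rtimes G}(A\rtimes_{\alpha}G)$ from Proposition~\ref{prop-factor}, identify the transported left action of $\contz(G)\rtimes G$ on $B_u\otimes\K(L^2G)$ as $1_{B_u}\otimes(M\rtimes\rho)$ by composing $\phi\rtimes G$ with the explicit covariant pair defining $\Phi_{B_u}$, and then collapse the balanced tensor product to the external one $B_u\otimes\F(G)$. Your closing remark on transporting the left $B_u$-module structures is a correct elaboration of a point the paper leaves implicit.
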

\begin{proof}
By Proposition \ref{prop-factor}, we have
\begin{align*}
 \F_\un(A)&\cong \F(G)\otimes_{\contz(G)\rtimes G} (A\rtimes_{\alpha}G)\cong\F(G)\otimes_{\contz(G)\rtimes G} \big(B_u\otimes \K(L^2G)\big),
 \end{align*}
 where in the last isomorphism we replaced
 $A\rtimes_{\alpha}G=(B_u\rtimes_{\delta_u}\widehat{G})\rtimes_{\widehat{\delta}_u,u}G$ by
 the isomorphic algebra $B_u\otimes \K(L^2G)$ via $\Phi_{B_u}$.
We need to understand the left action of $\contz(G)\rtimes G$ on $B_u\otimes \K(L^2G)$.
The left action of $\contz(G)\rtimes G$
on $A\rtimes_{\alpha}G$ is given by the integrated form of the covariant homomorphism
$(\iota_A\circ \phi, \iota_G)$, where $(\iota_A, \iota_G): (A,G)\to \M(A\rtimes_{\alpha}G)$ denote the canonical maps.
Identifying $B_u\rtimes_{\delta_u}\widehat{G}$ with $A$ via $k\rtimes \phi$ as in Theorem \ref{thm-landstad},
the corresponding action of $\contz(G)\rtimes G$ on $(B_u\rtimes_{\delta_u}\widehat{G})\rtimes_{\widehat{\delta}_u,u}G$
is given by the covariant homomorphism $(i_{B_u\rtimes_{\delta_u}\widehat{G}}\circ j_{\contz(G)}, \iota_G)$.
If we compose this with the isomorphism $\Phi_{B_u}=\big((\id_{B_u}\otimes \lambda)\circ \delta_u\rtimes (1_{B_u}\otimes M)\big)\rtimes (1_{B_u}\otimes \rho)$,
we see that the action of $\contz(G)\rtimes G$ on $B_u\otimes \K(L^2G)$ is given by
$1_{B_u}\otimes (M\rtimes \rho)$. Since $M\rtimes \rho:\contz(G)\rtimes G\to \K(L^2G)$ is an isomorphism, we get
$$\F_\un(A)\cong \F(G)\otimes_{\contz(G)\rtimes G} (B_u\otimes \K(L^2G))\cong B_u\otimes \F(G)$$
if we understand $\F(G)$ as a Hilbert $\C-\K(L^2G)$-bimodule via the isomorphism $M\rtimes \rho:\contz(G)\rtimes G\to \K(L^2G)$.
\end{proof}

We can now finish the proof of Theorem \ref{thm-landstad} as follows: it is clear that $B_\pn$ is
 the quotient of $B_u$ corresponding to
the quotient $B_\pn\otimes \K(L^2G)$ of $B_u\otimes \K(L^2G)$ under the Rieffel-correspondence for the equivalence bimodule $B_u\otimes \F(G)$. By the lemma, this module is isomorphic to $\F_\un(A)$ if we identify
$A\rtimes_{\alpha}G$ with $B_u\otimes \K(L^2G)$ via $\Phi_{B_u}$. But the quotient
of $A\rtimes_{\alpha}G$ corresponding to $B_\pn$ with respect to $\F_\un(A)$ is $A\rtimes_{\alpha,\pn}G
= (B_\pn\rtimes_{\delta_\pn} G)\rtimes_{\widehat{\delta}_\pn,\pn}G$ which implies that
$\Phi_{B_u}$ factors through an isomorphism $\Phi_{B_\pn}$ between
$(B_\pn\rtimes_{\delta_\pn} G)\rtimes_{\widehat{\delta}_\pn,\pn}G$ and $B_\pn\otimes \K(L^2G)$.
\end{proof}

As a consequence of our previous results, we see that for a weak $G\rtimes G$\nb-algebra $A$, the Morita equivalence $A^{G,\alpha}_\pn\sim_M A\rtimes_{\alpha,\pn}G$ implemented by $\F_\pn(A)$ is actually a canonical stable isomorphism:

\begin{corollary}
Let $(A,\alpha)$ be a weak $G\rtimes G$\nb-algebra and let $\|\cdot\|_\pn$ be a crossed-product norm on $\contc(G,A)$ for which the dual coaction on $A\rtimes_{\alpha,\un}G$ factors through a dual coaction on $A\rtimes_{\alpha,\pn}G$. Then $A\rtimes_{\alpha,\pn}G\cong A^{G,\alpha}_\pn\otimes\K(L^2G)$.
\end{corollary}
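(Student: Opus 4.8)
The plan is to deduce this essentially for free from Theorem~\ref{thm-landstad}. Note first that the hypothesis here---the dual coaction on $A\rtimes_{\alpha,\un}G$ factors through a dual coaction $\widehat{\alpha}_\pn$ on $A\rtimes_{\alpha,\pn}G$---is precisely the standing assumption of Theorem~\ref{thm-landstad} for the weak $G\rtimes G$\nb-algebra $(A,\alpha)$. So the first step is to apply that theorem and obtain the coaction $\delta_\pn$ on $B_\pn:=A_\pn^{G,\alpha}$ enjoying properties~(1)--(4).

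Next I would simply chain properties~(3) and~(4). By~(3), the covariant homomorphism $k\rtimes\phi$ is a $G$\nb-equivariant isomorphism $B_\pn\rtimes_{\delta_\pn}\widehat{G}\congto A$ carrying $\widehat{\delta}_\pn$ to $\alpha$; transporting the crossed-product norm $\|\cdot\|_\pn$ from $\contc(G,A)$ to $\contc(G,B_\pn\rtimes_{\delta_\pn}\widehat{G})$ along this isomorphism and forming the corresponding $\pn$\nb-crossed products gives an isomorphism $(B_\pn\rtimes_{\delta_\pn}\widehat{G})\rtimes_{\widehat{\delta}_\pn,\pn}G\cong A\rtimes_{\alpha,\pn}G$. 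By~(4), $(B_\pn,\delta_\pn)$ satisfies $\pn$\nb-duality with respect to exactly this transported norm, so the Katayama map $\Phi_{B_\pn}$ descends to an isomorphism $(B_\pn\rtimes_{\delta_\pn}\widehat{G})\rtimes_{\widehat{\delta}_\pn,\pn}G\cong B_\pn\otimes\K(L^2G)$. Composing the two identifications yields $A\rtimes_{\alpha,\pn}G\cong B_\pn\otimes\K(L^2G)=A_\pn^{G,\alpha}\otimes\K(L^2G)$, and since both identifications are the canonical ones furnished by Theorem~\ref{thm-landstad}, so is their composite. Alternatively, and more transparently for the ``canonical'' claim, I would generalize Lemma~\ref{lem-Mor} from the universal norm $\|\cdot\|_\un$ to an arbitrary such $\|\cdot\|_\pn$: using Proposition~\ref{prop-factor} to write $\F_\pn(A)\cong\F(G)\otimes_{\contz(G)\rtimes G}(A\rtimes_{\alpha,\pn}G)$ and the identification of~(3) together with~(4) to see that the left action of $\contz(G)\rtimes G$ on $A\rtimes_{\alpha,\pn}G\cong B_\pn\otimes\K(L^2G)$ is $1_{B_\pn}\otimes(M\rtimes\rho)$, with $M\rtimes\rho\colon\contz(G)\rtimes G\congto\K(L^2G)$, one obtains $\F_\pn(A)\cong B_\pn\otimes\F(G)$ as a $B_\pn-A\rtimes_{\alpha,\pn}G$ imprimitivity bimodule (with $\F(G)$ regarded as a $\C-\K(L^2G)$ bimodule via $M\rtimes\rho$), and the canonical stable isomorphism is then immediate from the Rieffel correspondence.

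I do not anticipate a genuine obstacle: the whole content is already packaged in Theorem~\ref{thm-landstad}. The one point worth a line of verification is the bookkeeping of crossed-product norms---that the norm $\|\cdot\|_\pn$ on $\contc(G,A)\cong\contc(G,B_\pn\rtimes_{\delta_\pn}\widehat{G})$ defining $A\rtimes_{\alpha,\pn}G$ is the same norm under which part~(4) of the theorem delivers $\pn$\nb-duality of $(B_\pn,\delta_\pn)$---but this is exactly how that part is phrased, so there is nothing further to do.
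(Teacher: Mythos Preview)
Your proposal is correct and follows essentially the same route as the paper: apply Theorem~\ref{thm-landstad}(3) to identify $(A,\alpha)$ with $(B_\pn\rtimes_{\delta_\pn}\widehat{G},\widehat{\delta}_\pn)$, then use~(4) ($\pn$\nb-duality of $(B_\pn,\delta_\pn)$) to obtain $A\rtimes_{\alpha,\pn}G\cong B_\pn\rtimes_{\delta_\pn}\widehat{G}\rtimes_{\widehat{\delta}_\pn,\pn}G\cong B_\pn\otimes\K(L^2G)$. The paper's proof is exactly this chain, just stated in one line; your extra remarks on norm bookkeeping and the Lemma~\ref{lem-Mor} alternative are fine but not needed.
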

\begin{proof}
By Theorem~\ref{thm-landstad}, we have $(A,\alpha)\cong (A^{G,\alpha}_\pn\rtimes_{\delta_\pn}\dualG,\dual\delta_\pn)$ as $G\rtimes G$\nb-algebras and $(A^{G,\alpha}_\pn,\delta_\pn)$ satisfies $\pn$-duality, so that
\begin{equation*}
A\rtimes_{\alpha,\pn}G\cong A^{G,\alpha}_\pn\rtimes_{\delta_\pn}\dualG\rtimes_{\dual\delta_\pn,\pn}G\cong A^{G,\alpha}_\pn\otimes\K(L^2G).\qedhere
\end{equation*}
\end{proof}

The proof of the final converse statement in Theorem~\ref{thm-landstad} will now be a consequence
of the following variant of Lemma \ref{lem-Mor}. It also shows that the isomorphism
$A\rtimes_{\alpha,\pn}G\cong A^{G,\alpha}_\pn\otimes\K(L^2G)$ of the above lemma
turns the Morita equivalence $A_\pn^G\sim_M A\rtimes_{\alpha,\pn}G$ into the canonical one:

\begin{lemma}\label{lem-Mor1}
Suppose that $\delta:B\to M(B\otimes C^*(G))$ is a $\pn$\nb-coaction for some given
crossed-product norm $\|\cdot\|_\pn$ on $C_c(G, B\rtimes_{\delta}\dualG)$.
Recall that this means that the canonical homomorphism
$$\Phi:B\rtimes_\delta\dualG\rtimes_{\hat\delta}G\to B\otimes \K(L^2G)$$
factors through an isomorphism $\Phi_\pn\colon B\rtimes_\delta\dualG\rtimes_{\hat\delta,\pn}G \congto B\otimes \K(L^2G)$.
Let the crossed product $(B\rtimes_{\delta}\dualG,\dual\delta)$ be equipped with the canonical weak $G\rtimes G$\nb-algebra structure.
Then there is a canonical isomorphism
$$\F_\pn(B\rtimes_{\delta}\dual G)\cong B\otimes \F(G)$$
as Hilbert $B\otimes \K(L^2G)$-modules if we identify $C_0(G)\rtimes G$ with $\K(L^2G)$ via $M\rtimes \rho$
and $B\rtimes_\delta\dualG\rtimes_{\dual\delta,\pn}G$ with $B\otimes \K(L^2G)$ via $\Phi_\pn$. In particular, the left action of $(B\rtimes_\delta\dualG)_c^{G,\dual\delta}$
on this module extends to an isomorphism
$$(B\rtimes_{\delta}\dualG)_\pn^{G,\dual\delta}= \K(\F_\pn(B\rtimes_{\delta}\dualG))\cong B.$$
This isomorphism sends the coaction $\delta_\pn^G$ on $(B\rtimes_{\delta}\dualG)_\pn^{G,\dual\delta}$ (given by Theorem~\ref{thm-landstad})
to the original coaction $\delta$ on $B$.
\end{lemma}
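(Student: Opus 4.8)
The plan is to follow the proof of Lemma~\ref{lem-Mor} almost verbatim, replacing the universal norm by $\|\cdot\|_\pn$ and the maximalization $B_u$ by $B$ itself, and then to track the coaction through the resulting identifications. Writing $A=B\rtimes_\delta\dualG$, $\alpha=\dual\delta$ and $\phi=j_{\contz(G)}$, Proposition~\ref{prop-factor} gives
$$\F_\pn(A)\cong \F(G)\otimes_{\contz(G)\rtimes G}(A\rtimes_{\alpha,\pn}G),$$
and the hypothesis that $\delta$ is a $\pn$\nb-coaction lets us replace $A\rtimes_{\alpha,\pn}G=B\rtimes_\delta\dualG\rtimes_{\dual\delta,\pn}G$ by $B\otimes\K(L^2G)$ via $\Phi_\pn$. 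The first task is to identify the left action of $\contz(G)\rtimes G$ on $B\otimes\K(L^2G)$. This action is the integrated form of $(\iota_A\circ\phi,\iota_G)$, and since $\Phi_\pn$ is induced by the covariant pair $(\pi,U)$ of~\eqref{eq-dualhom} with $\pi=(\id_B\otimes\lambda)\circ\delta\rtimes(1\otimes M)$ and $U=1_B\otimes\rho$, we get $\iota_A(\phi(f))\mapsto 1_B\otimes M(f)$ and $\iota_G(s)\mapsto 1_B\otimes\rho_s$, so the action is $1_B\otimes(M\rtimes\rho)$. As $M\rtimes\rho\colon\contz(G)\rtimes G\congto\K(L^2G)$ is an isomorphism, this yields
$$\F_\pn(A)\cong \F(G)\otimes_{\contz(G)\rtimes G}(B\otimes\K(L^2G))\cong B\otimes\F(G),$$
with $\F(G)$ regarded as a $\C-\K(L^2G)$ bimodule via $M\rtimes\rho$.

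For the second assertion I would take compact operators. Since $G$ acts freely and transitively on itself by right translation, $G\backslash G$ is a point, so $\F(G)$ is a $\C-\contz(G)\rtimes G$ imprimitivity bimodule and hence a full $\C-\K(L^2G)$ bimodule with $\K(\F(G))=\C$. Therefore
$$(B\rtimes_\delta\dualG)_\pn^{G,\dual\delta}=\K(\F_\pn(A))\cong\K(B\otimes\F(G))=B\otimes\K(\F(G))=B,$$
and this isomorphism extends the left action of $(B\rtimes_\delta\dualG)_c^{G,\dual\delta}$, as required.

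The remaining, and main, point is to show that this isomorphism carries $\delta_\pn^G$ to $\delta$. Here I would invoke the last statement of Lemma~\ref{lem-wg} (with $H=G$, so that $\Inf\dual\alpha=\dual\alpha=\widehat{\widehat{\delta}}_\pn$): under the factorization isomorphism of Proposition~\ref{prop-factor}, the module coaction $\delta_{\F_\pn(A)}$ is the balanced tensor product of $\delta_{\F(G)}$ and $\widehat{\widehat{\delta}}_\pn$. Transporting through $\Phi_\pn$ turns $\widehat{\widehat{\delta}}_\pn$ into the canonical coaction on $B\otimes\K(L^2G)$ of~\eqref{eq:CoactionOnBotimesK}, while the coaction that $\delta_{\F(G)}$ carries on its coefficient algebra is, via $M\rtimes\rho$, exactly the dual coaction on $\contz(G)\rtimes G$ to which the canonical coaction restricts on the $\K(L^2G)$\nb-factor. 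Thus the transported module coaction on $B\otimes\F(G)$ coincides with the canonical equivariant structure making $B\otimes\F(G)$ a $(B,\delta)-(B\otimes\K(L^2G),\text{canonical})$ equivalence bimodule. Since the coaction induced on the compacts by a module coaction is determined by ${_\K\bbraket{\delta_{\F}(\xi)}{\delta_{\F}(\eta)}}$, the induced coaction on $\K(B\otimes\F(G))=B$ is $\delta$, i.e.\ $\delta_\pn^G=\delta$ under the identification.

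I expect the coaction-matching in the last paragraph to be the main obstacle: it hinges on checking that $\Phi_\pn$ is equivariant for the bidual coaction $\widehat{\widehat{\delta}}_\pn$ and the canonical coaction of~\eqref{eq:CoactionOnBotimesK}, and that the $\K(L^2G)$\nb-coaction coming from $\delta_{\F(G)}$ agrees with the restriction of that canonical coaction. As a consistency check, or as an alternative and more computational route, one can instead evaluate $\delta_\pn^G$ directly on the dense subalgebra $(B\rtimes_\delta\dualG)_c^{G,\dual\delta}$ using the formula~\eqref{eq-deltap} of Remark~\ref{rem-formulacoact} and push it through the explicit isomorphism $\F_\pn(A)\cong B\otimes\F(G)$; tracing the fundamental unitary $w_G$ through the identifications recovers $\delta$.
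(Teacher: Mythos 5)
Your proposal is correct and follows essentially the same route as the paper's proof: the module identification $\F_\pn(B\rtimes_\delta\dualG)\cong B\otimes\F(G)$ is obtained exactly as in Lemma~\ref{lem-Mor} with $B_u$ replaced by $B$, and the coaction is tracked via the balanced tensor product statement of Lemma~\ref{lem-wg} together with the identification of the bidual coaction with the canonical coaction~\eqref{eq:CoactionOnBotimesK} under $\Phi_\pn$ (the step you flag as the main obstacle, which the paper likewise handles by citing Equation~(2.3) of \cite{Echterhoff-Kaliszewski-Quigg:Maximal_Coactions} as well known). Your observation that $\K(\F(G))=\C$ because $G$ acts freely and transitively on itself correctly supplies the implicit step $\K(B\otimes\F(G))\cong B$.
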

\begin{proof} The proof of the first assertion is word for word as the proof of Lemma \ref{lem-Mor} in case where
$A=B\rtimes_\delta\dualG$ and where we replace $B_u$ by $B$. This gives the chain of isomorphisms
\begin{align*}
\F_\pn(B\rtimes_{\delta}\dual G)&\cong\F_\pn(B\rtimes_{\delta}\dual G)\otimes_{B\rtimes_\delta\dualG\rtimes_{\hat\delta,\pn}G} (B\otimes \K(L^2G))\\
&\cong \F(G)\otimes_{C_0(G)\rtimes G}(B\otimes \K(L^2G))\cong B\otimes \F(G).
\end{align*}
Now, it is well-known (see Equation~(2.3) in \cite{Echterhoff-Kaliszewski-Quigg:Maximal_Coactions}) that the canonical isomorphism $B\rtimes_\delta\dualG\rtimes_{\hat\delta,\pn}G \cong B\otimes \K(L^2G)$ sends the bidual coaction to the coaction on $B\otimes \K(L^2G)$ given by the formula
\begin{equation}\label{eq:CoactionOnBotimesK}
\delta_{B\otimes\K}(x)=(1\otimes\omega_G^*)(\delta\otimes_*\id)(x)(1\otimes\omega_G)\quad\mbox{for all }x\in B\otimes\K(L^2G),
\end{equation}
where $\delta\otimes_*\id\defeq\sigma\circ(\delta\otimes\id)\colon B\otimes\K(L^2G)\to \M(B\otimes\K(L^2G)\otimes C^*(G))$ and $\sigma\colon C^*(G)\otimes \K(L^2G)\to \K(L^2G)\otimes C^*(G)$ is the flip map. By Lemma~\ref{lem-wg}, the coaction $\delta_{\F_\pn(A)}$ on $\F_\pn(B\rtimes_\delta\dualG)$ corresponds to the coaction on $\F(G)\otimes_{C_0(G)\rtimes G}(B\otimes \K(L^2G))$ which is the (balanced) tensor product (as in \cite{Echterhoff-Kaliszewski-Quigg-Raeburn:Categorical}*{Proposition~2.13}) of the coactions $\delta_{\F(G)}(f)=\omega_G(f\otimes 1)$ on $\F(G)$ and the coaction $\delta_{B\otimes\K}$ on $B\otimes \K(L^2G)$.
And it is easy to see that the canonical isomorphism $\F(G)\otimes_{C_0(G)\rtimes G}(B\otimes \K(L^2G))\cong B\otimes \F(G)$ sends this tensor product coaction to the coaction on $B\otimes \F(G)\cong B\otimes\F(G)$ given by the formula
$$\delta_{B\otimes\F(G)}(b\otimes f)\defeq (\delta\otimes_*\id)(b\otimes f)(1\otimes\omega_G)\quad\mbox{for all }f\in \contc(G), b\in B.$$
Finally, a simple computation shows that the induced coaction on $B\cong \K(B\otimes\F(G))$ coincides with the original coaction $\delta$ on $B$.
This proves the last statement of the lemma since the coaction $\delta_\pn^G$ on $(B\rtimes_{\delta}\dualG)_\pn^{G,\dual\delta}$ is, by definition, the coaction induced by $\delta_{\F_\pn(A)}$ on $\K(\F_\pn(A))\cong \K(B\otimes \F(G))$.
\end{proof}

\section{$E$-duality for ideals in $B(G)$}\label{sec-more}
In the previous section we considered arbitrary crossed-product norms $\|\cdot\|_\pn$
on $\contc(G,A)$ such that the corresponding crossed product $A\rtimes_{\alpha,\pn}G$ admits a dual
coaction. In \cite{Kaliszewski-Landstad-Quigg:Exotic} it is shown that if $E$ is a $G$\nb-invariant weak*-closed ideal in
the Fourier-Stieltjes algebra $B(G)$, then $E$ determines a
crossed-product norm $\|\cdot\|_E$ on $\contc(G,A)$ for any action $\alpha:G\to \Aut(A)$ which
 admits a dual coaction $\widehat{\alpha}_E$.
This allows us to consider functorial properties of the $E$\nb-crossed product functor $(A,G,\alpha)\mapsto
A\rtimes_{\alpha,E}G$. Indeed, it is possible to show that the construction $(A,G,\alpha)\mapsto A\rtimes_{\alpha,E}G$ is a functor between suitable categories
and Proposition~\ref{prop-functor} below already indicates some steps in this direction.

Recall that $B(G)$ consists of all functions of the form $s\mapsto \braket{ \pi(s) \xi}{\eta}$ in which $
\pi:G\to \U(H_\pi)$ is a unitary representation of $G$ and $\xi,\eta\in H_\pi$.
It can be identified with the space $\Cst(G)^*$ of continuous linear functionals on
$\Cst(G)$ via $f(x)=\braket{\pi(x)\xi}{\eta}$ if $f(s)= \braket{ \pi(s) \xi}{\eta}$ for all $s\in G$.
The weak*-topology on $B(G)$ is the one coming from this identification.
For any non-zero $G$\nb-invariant weak*-closed ideal $E\subseteq B(G)$ let $I_E={^\perp E}:=\{a\in \Cst(G): f(a)=0\;\mbox{ for all }f\in E\}$.
It is shown in \cite[Lemma 3.1 and Lemma 3.14]{Kaliszewski-Landstad-Quigg:Exotic} that $I_E$ is a closed ideal in $\Cst(G)$
which is contained in the kernel $\ker\lambda$ of the regular representation of $G$,
and
Kaliszewski, Quigg and Landstad define the {\em $E$\nb-group \cstar{}algebra of $G$} as the quotient \cstar{}algebra
$$C_E^*(G):=\Cst(G)/I_E$$
(see \cite[Definition 3.2]{Kaliszewski-Landstad-Quigg:Exotic}).
 Let $q_E: \Cst(G)\to C_E^*(G)$ denote the quotient map. If $\alpha: G\to \Aut(A)$ is an action,
then Kaliszewski, Quigg and Landstad define the {\em $E$\nb-crossed product} $A\rtimes_{\alpha,E}G$
as
$$A\rtimes_{\alpha, E}G=(A\rtimes_{\alpha}G)/J_{\alpha,E}\quad \text{with}
\quad J_{\alpha,E}=\ker(\id\otimes\,q_E)\circ \widehat{\alpha}_u.$$
The $E$\nb-crossed product
$A\rtimes_{\alpha,E}G$ ``lies between'' the maximal and the reduced crossed products and
the coaction $\widehat{\alpha}_u$ on the full crossed product factors
through a coaction $\widehat{\alpha}_E$ on $A\rtimes_{\alpha,E}G$ by \cite[Theorem 6.2]{Kaliszewski-Landstad-Quigg:Exotic}. We also have $\C\rtimes_EG\cong C_E^*(G)$, which follows from the fact that the comultiplication
on $C^*(G)$ factors through a coaction $\delta: C_E^*(G)\to \M(C_E^*(G)\otimes C^*(G))$.

So assume from now on that $E\subseteq B(G)$ is a $G$\nb-invariant weak*-closed nonzero ideal and that
$\delta:B\to \M(B\otimes \Cst(G))$ is any given coaction. By Theorem \ref{thm-landstad}
and Corollary \ref{cor-landstad}, we know that there exists a canonical coaction $\delta_E:B_E\to \M(B_E\otimes \Cst(G))$
which satisfies $E$\nb-duality and which has the same dual system as the original coaction $\delta$, \ie, $(B_E,\delta_E)$ is an $E$\nb-ization for $(B,\delta)$.
We now want to describe the algebra $B_E$ in terms of $E$. At the same time, we give a positive answer
to \cite[Conjecture 6.14]{Kaliszewski-Landstad-Quigg:Exotic}:

\begin{theorem}\label{thm-deltaE}
Let $\delta_u: B_u\to \M(B_u\otimes \Cst(G))$ denote the maximalization of the coaction
$\delta:B\to \M(B\otimes \Cst(G))$. Then $B_E=B_u/J_{\delta,E}$ with
$$J_{\delta,E}=\ker(\id_{B_u}\otimes\, q_E)\circ \delta_u\subseteq B_u.$$
Moreover, the coaction $\delta_u$ factors to give a coaction $\delta_E:B_E\to \M(B_E\otimes \Cst(G))$
which satisfies $E$\nb-duality. In particular, any dual coaction $\widehat{\alpha}_E$ on any
$E$\nb-crossed product $A\rtimes_{\alpha,E}G$ satisfies $E$\nb-duality.
\end{theorem}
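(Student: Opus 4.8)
The plan is to identify $B_E$ with $B_u/J_{\delta,E}$ after stabilising by $\K(L^2G)$, where ideals are transparent, and then to descend to the base algebras. Write $(A,\alpha)\defeq(B\rtimes_\delta\dualG,\dual\delta)$, so that $B_E=A_E^{G,\alpha}$. By Theorem~\ref{thm-landstad} and the identifications of this section, $B_u=A_u^{G,\alpha}$ is the maximalization of both $(B,\delta)$ and $(B_E,\delta_E)$, with $B_u\rtimes_{\delta_u}\dualG\cong A$ as weak $G\rtimes G$\nb-algebras; under this identification $\alpha=\dual{\delta_u}$ and the dual coaction $\widehat{\alpha}_u$ is the bidual $\dual{\dual{\delta_u}}$. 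Since $\delta_u$ is maximal, the Katayama map is an isomorphism $\Phi_{B_u}\colon A\rtimes_{\alpha,u}G=B_u\rtimes_{\delta_u}\dualG\rtimes_{\dual{\delta_u},u}G\congto B_u\otimes\K(L^2G)$ which, by~\eqref{eq:CoactionOnBotimesK} applied with $B=B_u$, carries $\widehat{\alpha}_u$ to the coaction $\delta_{B_u\otimes\K}$.

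The crux is a kernel computation. By definition $A\rtimes_{\alpha,E}G=(A\rtimes_{\alpha,u}G)/J_{\alpha,E}$ with $J_{\alpha,E}=\ker(\id\otimes q_E)\circ\widehat{\alpha}_u$, so transporting $J_{\alpha,E}$ through $\Phi_{B_u}$ and using $\widehat{\alpha}_u\leftrightarrow\delta_{B_u\otimes\K}$ gives $\Phi_{B_u}(J_{\alpha,E})=\ker\big((\id\otimes q_E)\circ\delta_{B_u\otimes\K}\big)$. Now I unpack~\eqref{eq:CoactionOnBotimesK}: the map $\delta_{B_u\otimes\K}(x)=(1\otimes\omega_G^*)(\delta_u\otimes_*\id)(x)(1\otimes\omega_G)$ is conjugation of $\delta_u\otimes_*\id$ by the unitary $1\otimes\omega_G$. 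Because $q_E$ acts on the coaction leg, $(\id\otimes q_E)\circ\delta_{B_u\otimes\K}$ is conjugation, by the unitary $1\otimes\omega_G^E$ with $\omega_G^E\defeq(\id_{\K}\otimes q_E)(\omega_G)$, of $\delta_u^E\otimes_*\id$, where $\delta_u^E\defeq(\id_{B_u}\otimes q_E)\circ\delta_u$. Conjugation by a fixed unitary multiplier and the flip are injective, so this map has the same kernel as $\delta_u^E\otimes\id_{\K}$. Factoring $\delta_u^E$ through the injection $B_u/J_{\delta,E}\into\M(B_u\otimes C_E^*(G))$ and using that minimal tensoring with $\K$ preserves injectivity, I get $\ker(\delta_u^E\otimes\id_{\K})=(\ker\delta_u^E)\otimes\K=J_{\delta,E}\otimes\K$. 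Hence $\Phi_{B_u}(J_{\alpha,E})=J_{\delta,E}\otimes\K$ and consequently $A\rtimes_{\alpha,E}G\cong(B_u/J_{\delta,E})\otimes\K(L^2G)$, the $E$\nb-quotient corresponding to $q\otimes\id_{\K}$ for the quotient map $q\colon B_u\onto B_u/J_{\delta,E}$. This paragraph is where I expect the real work to lie: one must keep track of which $\Cst(G)$\nb-leg $q_E$ hits, check that conjugation by $1\otimes\omega_G$ survives $\id\otimes q_E$ as conjugation by the unitary $1\otimes\omega_G^E$, and justify $\ker(\delta_u^E\otimes\id_{\K})=(\ker\delta_u^E)\otimes\K$ in the multiplier setting.

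To descend to the base algebras I use $E$\nb-duality on the other side. By Theorem~\ref{thm-landstad}(3),(4) we have $B_E\rtimes_{\delta_E}\dualG=A$ and, via the Katayama map, $A\rtimes_{\alpha,E}G=B_E\rtimes_{\delta_E}\dualG\rtimes_{\dual{\delta_E},E}G\cong B_E\otimes\K(L^2G)$. Naturality of the Katayama map applied to the maximalization surjection $p\colon B_u\onto B_E$ (which intertwines $\delta_u$ and $\delta_E$ and induces the identity on $B_u\rtimes_{\delta_u}\dualG=A=B_E\rtimes_{\delta_E}\dualG$) shows that the $E$\nb-quotient $A\rtimes_{\alpha,u}G\to A\rtimes_{\alpha,E}G$ corresponds to $p\otimes\id_{\K}$; comparing with the previous paragraph forces $\ker p=J_{\delta,E}$, whence $B_E=B_u/J_{\delta,E}$. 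In particular $\delta_u$ factors through this quotient to give $\delta_E$, and $\delta_E$ satisfies $E$\nb-duality by Theorem~\ref{thm-landstad}(4). (Alternatively, once $A\rtimes_{\alpha,E}G\cong(B_u/J_{\delta,E})\otimes\K$ is known, one equips $B_u/J_{\delta,E}$ with the factored coaction and invokes uniqueness of $E$\nb-izations, contained in the converse part of Theorem~\ref{thm-landstad}, to identify it with $(B_E,\delta_E)$.)

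Finally, the stated consequence, a positive answer to \cite[Conjecture~6.14]{Kaliszewski-Landstad-Quigg:Exotic}, is the special case of a maximal coaction. For an arbitrary action $\alpha\colon G\to\Aut(A)$ the dual coaction $\widehat{\alpha}_u$ on $A\rtimes_\alpha G$ is maximal, so applying the theorem to $\delta\defeq\widehat{\alpha}_u$ yields $B_u=A\rtimes_\alpha G$ and $\delta_u=\widehat{\alpha}_u$; then $J_{\delta,E}=\ker(\id\otimes q_E)\circ\widehat{\alpha}_u=J_{\alpha,E}$, so $B_E=(A\rtimes_\alpha G)/J_{\alpha,E}=A\rtimes_{\alpha,E}G$ and the factored coaction $\delta_E$ coincides with the Kaliszewski--Landstad--Quigg dual coaction $\widehat{\alpha}_E$. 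The theorem's conclusion that $\delta_E$ satisfies $E$\nb-duality is then exactly the assertion that every such $\widehat{\alpha}_E$ satisfies $E$\nb-duality.
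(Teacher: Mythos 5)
Your proposal is correct, but it establishes the crucial ideal identification by a genuinely different mechanism than the paper. The paper's proof is a one-line module-level argument: it takes the coaction $\delta_{\F_\un(A)}$ on the $B_u$--$(A\rtimes_{\alpha}G)$ equivalence bimodule $\F_\un(A)$ from Theorem~\ref{thm-landstad} and observes that $(\id_{\F_\un(A)}\otimes q_E)\circ\delta_{\F_\un(A)}$ is a bimodule map compatible with $(\id_{B_u}\otimes q_E)\circ\delta_u$ on the left and $(\id\otimes q_E)\circ\widehat{\alpha}_u$ on the right, so that $J_{\delta,E}$ and $J_{\alpha,E}$ correspond under the Rieffel correspondence for $\F_\un(A)$; the conclusion $B_E=B_u/J_{\delta,E}$ then follows because the last paragraph of the proof of Theorem~\ref{thm-landstad} constructed $B_\pn$ precisely as the Rieffel quotient of $B_u$ matching $A\rtimes_{\alpha,\pn}G$. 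You instead transport everything through the Katayama isomorphism $\Phi_{B_u}$ --- where, by Lemma~\ref{lem-Mor}, the Rieffel correspondence trivializes to $J\mapsto J\otimes\K(L^2G)$ --- and verify the correspondence by hand: unwinding~\eqref{eq:CoactionOnBotimesK}, checking that $\id\otimes q_E$ turns conjugation by $1\otimes\omega_G$ into conjugation by the unitary $1\otimes\omega_G^E$, and using exactness of $\otimes\,\K$ to get $\ker(\delta_u^E\otimes\id_{\K})=J_{\delta,E}\otimes\K$; your naturality-of-Katayama step then replaces the paper's implicit appeal to the Rieffel-quotient description of $B_E$, and both identifications of $\ker p$ indeed force $\ker p=J_{\delta,E}$. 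The trade-off: the paper's route is shorter but leans on the (unspelled-out) principle that a compatible quotient morphism of an equivalence bimodule matches kernels under the Rieffel correspondence, whereas yours is longer but every step is a routine, checkable computation (unitary conjugation, flips, tensor kernels), at the cost of the leg-bookkeeping you rightly flag. Your derivation of the ``in particular'' clause --- applying the theorem to the maximal coaction $\delta=\widehat{\alpha}_u$, so that $B_u=A\rtimes_{\alpha}G$, $J_{\delta,E}=J_{\alpha,E}$ and $\delta_E=\widehat{\alpha}_E$ --- agrees in substance with the paper's.
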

\begin{proof}
Let $A=B\rtimes_\delta \widehat{G}$ equipped with the canonical structure of a weakly proper $G\rtimes G$\nb-algebra
and let $(\F_\un(A), \delta_{\F_\un(A)})$ denote the coaction on the $B_u- A\rtimes_{\alpha}G$ equivalence
bimodule $\F_\un(A)$ of Theorem \ref{thm-landstad}, with $\alpha:=\widehat{\delta}$.
The theorem will follow from Theorem \ref{thm-landstad} as soon as we can show that the
ideal $J_{\delta,E}$ in $B_u$ corresponds to the ideal $J_{\alpha,E}$ in $A\rtimes_{\alpha}G$
under the Rieffel-correspondence.
But this follows from the existence  of the bimodule map
$$(\id_{\F_\un(A)}\otimes\,q_E)\circ \delta_{\F_\un(A)}: \F_\un(A)\to \M(\F_\un(A)\otimes C_E^*(G))$$
which is compatible with $(\id_{B_u}\otimes\, q_E)\circ \delta_u$ on the left and
$(\id_{A\rtimes G} \otimes\, q_E)\circ \widehat{\alpha}_u$ on the right hand side of the module.

If we apply this result to a dual coaction $\delta_E=\widehat{\alpha}_E$, we see that all dual coactions
on $E$\nb-crossed products satisfy $E$\nb-duality.
\end{proof}

It might be reasonable to ask whether every coaction $\delta:B\to \M(B\otimes \Cst(G))$ is
one of the coactions $\delta_E$ for some ideal $E$. By the above theorem, this is actually
equivalent to asking whether every dual coaction $\widehat\alpha_\pn$ on some
intermediate crossed product $A\rtimes_{\alpha,\pn}G$ equals $\widehat\alpha_E$ for
some $G$\nb-invariant weak*-closed ideal $E\subseteq B(G)$. We shall see below that this is
not the case. For the proof, we first need:

\begin{proposition}\label{prop-functor}
Let $E$ be a $G$\nb-invariant weak*-closed ideal of $B(G)$. Let $\alpha:G\to \Aut(A)$ and
$\beta:G\to \Aut(B)$ be actions and let $\Theta:A\to \M(B)$ be a $G$\nb-equivariant \Star{}homomorphism.
Then the inclusion $\contc(G,A)\to \contc(G, \M(B))\subseteq \M(B\rtimes_{\beta,E}G)$ extends to a
\textup(unique\textup) \Star{}homomorphism $\Theta\rtimes_EG:A\rtimes_{\alpha,E}G\to \M(B\rtimes_{\beta,E}G)$.
\end{proposition}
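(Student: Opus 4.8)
The plan is to first produce the homomorphism at the level of the \emph{full} crossed products and then to push it down to the $E$\nb-crossed products, which by definition are the quotients $A\rtimes_{\alpha,E}G=(A\rtimes_\alpha G)/J_{\alpha,E}$ and $B\rtimes_{\beta,E}G=(B\rtimes_\beta G)/J_{\beta,E}$. For the full level I would invoke the standard functoriality of full crossed products: composing $\Theta$ with the strict extension to $\M(B)$ of the canonical inclusion $\iota_B\colon B\to\M(B\rtimes_\beta G)$ gives a \Star{}homomorphism $\iota_B\circ\Theta\colon A\to\M(B\rtimes_\beta G)$, and together with the canonical (strictly continuous, unitary-valued) homomorphism $\iota_G\colon G\to\M(B\rtimes_\beta G)$ this is a covariant homomorphism of $(A,G,\alpha)$ --- the covariance $\iota_G(s)(\iota_B\circ\Theta)(a)\iota_G(s)^*=(\iota_B\circ\Theta)(\alpha_s(a))$ follows at once from the covariance of $(\iota_B,\iota_G)$ together with the equivariance $\Theta\circ\alpha_s=\beta_s\circ\Theta$. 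Its integrated form $\Theta\rtimes G\colon A\rtimes_\alpha G\to\M(B\rtimes_\beta G)$ restricts on $\contc(G,A)$ to the inclusion $\varphi\mapsto(s\mapsto\Theta(\varphi(s)))$ into $\contc(G,\M(B))\subseteq\M(B\rtimes_\beta G)$.

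The key point for the descent is that $\Theta\rtimes G$ intertwines the dual coactions, \ie
\[
\widehat{\beta}_u\circ(\Theta\rtimes G)=\big((\Theta\rtimes G)\otimes\id_G\big)\circ\widehat{\alpha}_u,
\]
which is the usual naturality of the dual coaction and is checked by the explicit entrywise formulas on the dense subalgebra $\contc(G,A)$, where $\widehat{\alpha}_u(\varphi)$ is represented by $s\mapsto\varphi(s)\otimes u_s$ and $\Theta\rtimes G$ acts by applying $\Theta$ in each fibre. Write $\Xi_\beta\defeq(\id\otimes q_E)\circ\widehat{\beta}_u\colon B\rtimes_\beta G\to\M\big(B\rtimes_\beta G\otimes C_E^*(G)\big)$, a nondegenerate \Star{}homomorphism with $\ker\Xi_\beta=J_{\beta,E}$, and similarly $\Xi_\alpha$ with $\ker\Xi_\alpha=J_{\alpha,E}$. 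Applying $\id\otimes q_E$ to the intertwining identity yields $\Xi_\beta\circ(\Theta\rtimes G)=\big((\Theta\rtimes G)\otimes\id\big)\circ\Xi_\alpha$, so for every $x\in J_{\alpha,E}=\ker\Xi_\alpha$ we obtain $\Xi_\beta\big((\Theta\rtimes G)(x)\big)=0$.

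It remains to deduce that the composite $\Psi\defeq q_{\beta,E}\circ(\Theta\rtimes G)\colon A\rtimes_\alpha G\to\M(B\rtimes_{\beta,E}G)$ (with $q_{\beta,E}$ the quotient map, which extends to multipliers since it is surjective) annihilates $J_{\alpha,E}$; it then factors through $A\rtimes_{\alpha,E}G$ to give the desired $\Theta\rtimes_E G$, whose uniqueness and asserted restriction to $\contc(G,A)$ are immediate from density. So fix $x\in J_{\alpha,E}$ and put $m\defeq(\Theta\rtimes G)(x)\in\M(B\rtimes_\beta G)$; by the previous paragraph $\Xi_\beta(m)=0$, where $\Xi_\beta$ denotes also its extension to $\M(B\rtimes_\beta G)$. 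For any $b\in B\rtimes_\beta G$ both $mb$ and $bm$ lie in $B\rtimes_\beta G$, and multiplicativity of the extension gives $\Xi_\beta(mb)=\Xi_\beta(m)\Xi_\beta(b)=0$ and likewise $\Xi_\beta(bm)=0$; hence $mb,bm\in\ker\Xi_\beta=J_{\beta,E}$. This says precisely that $m$ multiplies $B\rtimes_\beta G$ into $J_{\beta,E}$ from both sides, \ie $q_{\beta,E}(m)=0$, so $\Psi(x)=0$ as required.

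I expect the main obstacle to be exactly this last multiplier-algebra bookkeeping, for two related reasons. First, $(\Theta\rtimes G)(x)$ is a \emph{multiplier} of $B\rtimes_\beta G$ rather than an element, so one cannot read off $q_{\beta,E}(m)=0$ directly from $J_{\beta,E}\subseteq B\rtimes_\beta G$; the factorization $\Xi_\beta(mb)=\Xi_\beta(m)\Xi_\beta(b)$ is what converts the multiplier statement into an honest-ideal statement. Second, $\Theta$ is not assumed nondegenerate, so $\Theta\rtimes G$ may be degenerate and $(\Theta\rtimes G)\otimes\id$ need not extend to all of $\M\big(A\rtimes_\alpha G\otimes C_E^*(G)\big)$; this is harmless here because the identity $\Xi_\beta\circ(\Theta\rtimes G)=((\Theta\rtimes G)\otimes\id)\circ\Xi_\alpha$ is only ever evaluated at $x$ with $\Xi_\alpha(x)=0$, so the possibly ill-defined map is applied solely to the zero element. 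One should nonetheless record that $\widehat{\alpha}_u$, $\widehat{\beta}_u$, $\id\otimes q_E$ and $\Xi_\beta$ are all nondegenerate, which is what legitimizes the strict extensions to multiplier algebras used throughout.
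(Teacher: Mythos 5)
Your route is the same as the paper's: construct $\Theta\rtimes_u G\colon A\rtimes_\alpha G\to\M(B\rtimes_{\beta,u}G)$ at the universal level, establish the commutative square intertwining $\Xi_\alpha\defeq(\id\otimes q_E)\circ\widehat{\alpha}_u$ and $\Xi_\beta\defeq(\id\otimes q_E)\circ\widehat{\beta}_u$, and descend through the quotients defining the $E$\nb-crossed products. Your final bookkeeping -- converting $\Xi_\beta(m)=0$ for the multiplier $m=(\Theta\rtimes_u G)(x)$ into $q_{\beta,E}(m)=0$ via $\Xi_\beta(mb)=\Xi_\beta(m)\Xi_\beta(b)\in J_{\beta,E}$ -- is correct and is in fact a welcome elaboration of a step the paper leaves implicit. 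For nondegenerate $\Theta$ the proof is complete.

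For degenerate $\Theta$, however, your dismissal of the multiplier issue is circular and constitutes a genuine gap. You verify the identity $\Xi_\beta\circ(\Theta\rtimes_u G)=\big((\Theta\rtimes_u G)\otimes\id\big)\circ\Xi_\alpha$ only on $\contc(G,A)$, and a general $x\in J_{\alpha,E}$ does not lie in this subalgebra; to evaluate the identity at $x$ you must pass to a limit $x_n\to x$ with $x_n\in\contc(G,A)$, and that limit is taken on the \emph{domain} side of $(\Theta\rtimes_u G)\otimes\id$: you need this map to be defined and bounded on a set containing all the elements $\Xi_\alpha(x_n)$, which are genuine multipliers, not merely ``at the zero element'' $\Xi_\alpha(x)=0$. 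If $\Theta\rtimes_u G$ is degenerate, $(\Theta\rtimes_u G)\otimes\id$ is a priori defined only on $A\rtimes_\alpha G\otimes C_E^*(G)$, so without an extension the intertwining identity is not even well-formed at $x$, and the conclusion $\Xi_\beta\big((\Theta\rtimes_u G)(x)\big)=0$ does not follow. The paper repairs exactly this point by replacing $\M(A\rtimes_\alpha G\otimes C_E^*(G))$ with the subalgebra $\widetilde{M}(A\rtimes_\alpha G\otimes C_E^*(G))$ of all $m$ with $m(1\otimes z)\in A\rtimes_\alpha G\otimes C_E^*(G)$ for all $z\in C_E^*(G)$; this subalgebra contains the range of $\Xi_\alpha$ and carries a unique extension of $(\Theta\rtimes_u G)\otimes\id$. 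Equivalently, you could run your argument after right-multiplying by $1\otimes z$: the identity $\Xi_\beta\big((\Theta\rtimes_u G)(x)\big)(1\otimes z)=\big((\Theta\rtimes_u G)\otimes\id\big)\big(\Xi_\alpha(x)(1\otimes z)\big)$ involves only the honestly defined map on $A\rtimes_\alpha G\otimes C_E^*(G)$, both sides are norm-continuous in $x$ and agree on $\contc(G,A)$, hence agree everywhere; for $x\in J_{\alpha,E}$ the right side vanishes for every $z$, and nondegeneracy of $1\otimes C_E^*(G)$ then forces $\Xi_\beta\big((\Theta\rtimes_u G)(x)\big)=0$, after which your final paragraph applies verbatim.
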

\begin{proof}
Let $\Theta\rtimes_uG:A\rtimes_{\alpha}G\to \M(B\rtimes_{\beta,u}G)$ denote the corresponding map
for the universal norms. We need to show that the ideal $J_{\alpha,E}=\ker(\id_{A\rtimes G}\otimes q_E)\circ\widehat\alpha_u$
contains $\ker(\Theta\rtimes_uG)$. But this follows from the commutativity of the diagram
$$
\begin{CD}
A\rtimes_{\alpha}G @>\Theta\rtimes_uG>> \M(B\rtimes_{\beta,u}G)\\
@V(\id\otimes q_E)\circ \widehat\alpha VV  @VV(\id\otimes q_E)\circ \widehat\beta V\\
\M(A\rtimes_{\alpha}G\otimes C_E^*(G))
@>(\Theta\rtimes_uG)\otimes \id_G>> \M(B\rtimes_{\beta,u}G\otimes C_E^*(G))
\end{CD}
$$
where we extended maps to multiplier algebras where necessary, which
is no problem if $\Theta: A\to \M(B)$ is assumed to be nondegenerate.
In case that $\Theta$ is degenerate, we may replace
$\M(A\rtimes_{\alpha}G\otimes C_E^*(G))$ by the subalgebra
$\widetilde{M}(A\rtimes_{\alpha}G\otimes C_E^*(G))$ consisting of all
$m$ which satisfy $m(1\otimes z)\in
 A\rtimes_{\alpha}G\otimes C_E^*(G)$ for all $z\in C_E^*(G)$
 in the lower left corner of the diagram,
on which there always exists a unique extension of $(\Theta\rtimes_uG)\otimes \id_G$.
 \end{proof}

Note that it follows in particular from the above proposition that for all weak-* closed ideals $E\subseteq B(G)$ the morphism
$\C\to \M(A); \lambda\mapsto \lambda 1$ induces a canonical map
$$i_{\Cst_E(G)}: C_E^*(G)=\C\rtimes_EG\to \M(A\rtimes_{\alpha,E}G).$$

\begin{example}[Counter-example to Conjecture~6.12 in \cite{Kaliszewski-Landstad-Quigg:Exotic}]
\label{ex-coaction}
Let $G$ be any locally compact group such that $\Cst(G)$ is not nuclear (\eg any discrete non-amenable group). Then there exists a \cstar{}algebra $A$ such that
$$A\otimes_{\max} \Cst(G)\not\cong A\otimes \Cst(G)\not\cong A\otimes C_r^*(G),$$
where we understand the symbol $\not\cong$ in the sense that the canonical surjective morphisms from left to right
are not injective. Consider the trivial action $\triv$ of $G$ on $A$. Then
$$A\rtimes_{\triv,u}G\cong A\otimes_{\max}\Cst(G)\quad\text{and}\quad A\rtimes_{\triv,r}G\cong A\otimes C_r^*(G),$$
so that the tensor product $A\otimes \Cst(G)$ can be regarded as a $\pn$\nb-crossed product $A\rtimes_{\triv,\pn}G$
for some crossed-product norm $\|\cdot\|_\pn$ lying between $\|\cdot\|_u$ and $\|\cdot\|_r$.
Moreover, $\id_A\otimes\delta_G$ is a coaction on $A\otimes \Cst(G)$ which corresponds to
the dual coaction $\widehat{\triv}_\pn$
under the identification $A\otimes \Cst(G)\cong A\rtimes_{\triv,\pn}G$.

We claim that $\|\cdot\|_\pn$ is not an $E$\nb-norm for {\bf any} $G$\nb-invariant weak*-closed ideal $E\subseteq B(G)$.
Assume to the contrary that $\|\cdot\|_\pn=\|\cdot\|_E$ for some $E$. Since $\|\cdot\|_\pn$ is strictly smaller than
$\|\cdot\|_u$, we then must have that $C_E^*(G)$ is a proper quotient of $\Cst(G)$. By Proposition~\ref{prop-functor}, the
canonical (injective) inclusion $i_{\Cst(G)}\colon \Cst(G)\to \M(A\otimes \Cst(G))$ must factor through a map
$i_{C_E^*(G)}: C_E^*(G)\to \M(A\otimes \Cst(G))$.
But this is impossible since it forces the surjection $\Cst(G)\to \Cst_E(G)$ to be injective.

It follows from Theorem \ref{thm-deltaE} that the coaction on $A\otimes \Cst(G)$ cannot
satisfy $E$\nb-duality for any $E$ (although it satisfies duality for some crossed-product norm). This gives a negative answer to \cite[Conjecture 6.12]{Kaliszewski-Landstad-Quigg:Exotic}.
\end{example}

\section{Functoriality}

Throughout this section, we fix a locally compact group $G$ and a $G$\nb-invariant weak-* closed ideal $\En$ of $B(G)$ containing $A(G)$ and we denote by $\|\cdot\|_\En$ the corresponding crossed-product norm on $\contc(G,A)$ for any $G$\nb-algebra $(A,\alpha)$. We write $A\rtimes_{\alpha,\En}G$ or simply $A\rtimes_\En G$ for the associated crossed product.
Proposition~\ref{prop-functor} shows that such norms have good functorial properties (and it is, in fact, not difficult to see that the $\En$-crossed product construction $A\mapsto A\rtimes_\En G$ yields a functor between suitable categories). We want to show that our constructions are also functorial for such norms. More precisely, we show that the construction $A\mapsto A^G_\En:=A^{G,\alpha}_{\En}$ extends to a functor between appropriate categories of \cstar{}algebras. We will consider both natural categories of \cstar{}algebras with (equivariant) ordinary \Star{}homomorphisms or nondegenerate
{\Star{}homomorphisms into multiplier algebras} as their morphisms. In both cases we obtain functoriality in complete generality (the actions here might be even non-saturated).

\begin{remark}\label{rem-module-hom}Let $\E_i$ be a Hilbert $B_i$-module for $i=1,2$ and let $\phi\colon B_1\to \M(B_2)$ be a (possibly degenerate) \Star{}homomorphism. A \emph{morphism} from $\E_1$ to $\E_2$ compatible with $\phi$ is a linear map $\psi\colon \E_1\to \M(\E_2)\defeq \Lb(B_2,\E_2)$ satisfying $\psi(\xi\cdot b)=\psi(\xi)\circ \phi(b)$ and $\phi(\braket{\xi}{\eta}_{B_1})=\psi(\xi)^*\circ\psi(\eta)$ for all $\xi,\eta\in \E_1$ and $b\in B_1$. Such a morphism induces a \Star{}homomorphism $\tilde\psi\colon \K(\E_1)\to \Lb(\E_2)=\M(\K(\E_2))$ satisfying
$$\tilde\psi(_{\K(\E_1)}\braket{\xi}{\eta})=\psi(\xi)\circ\psi(\eta)^*$$
(see \cite[Lemma~2.2]{Kashiwara-Pinzari-Watatani:Ideal_structure}).
If $\phi$ is injective (\ie, isometric), then so is $\psi$.

Moreover, we say that such $\psi\colon \E_1\to \M(\E_2)$ is {\em nondegenerate} if $\psi(\E_1)B_2$ is dense in $\E_2$.
It follows then that $\tilde\psi:\K(\E_1)\to \M(\K(\E_2))$ is also nondegenerate by \cite[Example 1.10]{Echterhoff-Raeburn:Multipliers}.
\end{remark}

\begin{proposition}\label{prop-functorFA}
Let $A$ and $B$ be two $X\rtimes G$-algebras and let $\pi\colon A\to \M(B)$ be a (possibly degenerate) $X\rtimes G$-equivariant \Star{}homomorphism, that is, a \Star{}homomorphism which intertwines  the actions of $G$ and $\contz(X)$ on $A$ and $B$. Then $\pi$ induces a morphism $\F_\En(\pi)\colon \F_\En(A)\to \M(\F_\En(B))=\Lb(B\rtimes_\En G,\F_\En(B))$ of Hilbert modules which is compatible with the homomorphism
$\pi\rtimes_\En G\colon A\rtimes_\En G\to \M(B\rtimes_\En G)$ (from Proposition~\ref{prop-functor})  and which is given by the formula:
\begin{equation}\label{eq:FormularForMorphismF(pi)}
\F_\En(\pi)(\xi)g=\pi(\xi)*g=\int_G\Delta(t)^{-1/2}\beta_t(\pi(\xi)g(t^{-1}))\dd{t}
\end{equation}
for all $\xi\in \F_c(A)=\contc(X)\cdot A$ and $g\in \contc(G,B)$. Moreover, $\pi$ induces a
\Star{}homo\-morphism $\pi^G_\En\colon A^G_\En\to \M(B^G_\En)$ satisfying $\pi^G_\En(\EE^\alpha(a))=\EE^\beta(\pi(a))$ for all $a\in A_c=\contc(X)\cdot A\cdot \contc(X)$. If $\pi\rtimes_\En G$ is injective or nondegenerate (the latter follows, if $\pi$ is nondegenerate), then so are $\F_\En(\pi)$ and $\pi^G_\En$.
\end{proposition}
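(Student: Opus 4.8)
The plan is to produce $\F_\En(\pi)$ as a morphism of Hilbert modules in the sense of Remark~\ref{rem-module-hom} (with $B_1=A\rtimes_\En G$, $B_2=B\rtimes_\En G$ and $\phi=\pi\rtimes_\En G$), after which the induced \Star{}homomorphism $\pi^G_\En\defeq\widetilde{\F_\En(\pi)}\colon \K(\F_\En(A))=A^G_\En\to \M(\K(\F_\En(B)))=\M(B^G_\En)$ is supplied directly by that remark. Thus the whole proposition reduces to verifying that formula~\eqref{eq:FormularForMorphismF(pi)} defines a linear map $\F_c(A)\to \Lb(B\rtimes_\En G,\F_\En(B))$ which, for $\xi,\eta\in\F_c(A)$ and $\varphi\in B_0=\contc(G,A)$, satisfies
\begin{equation*}
\F_\En(\pi)(\xi\cdot\varphi)=\F_\En(\pi)(\xi)\circ(\pi\rtimes_\En G)(\varphi)\quad\text{and}\quad(\pi\rtimes_\En G)\big(\bbraket{\xi}{\eta}_{B_0}\big)=\F_\En(\pi)(\xi)^*\circ\F_\En(\pi)(\eta),
\end{equation*}
and then extending by density and continuity. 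Writing $\phi_A,\phi_B$ for the structure maps, equivariance of $\pi$ says exactly that $\pi\circ\phi_A=\phi_B$ and $\pi\circ\alpha_t=\beta_t\circ\pi$ on the multiplier level, so that $\pi\rtimes_\En G$ is a homomorphism of $\contz(X)\rtimes G$-algebras. Conceptually this already gives well-definedness: tensoring $\pi\rtimes_\En G$ with the identity on $\F(X)$ produces a morphism $\id_{\F(X)}\otimes(\pi\rtimes_\En G)$ from $\F(X)\otimes_{\contz(X)\rtimes G}(A\rtimes_\En G)$ into $\M\big(\F(X)\otimes_{\contz(X)\rtimes G}(B\rtimes_\En G)\big)$, which transports through Proposition~\ref{prop-factor} to the desired map; evaluating on an elementary tensor and comparing with $\Psi_\En(f\otimes\varphi)=\int_G\Delta(t)^{-1/2}\alpha_t(f\cdot\varphi(t^{-1}))\dd t$ identifies it with~\eqref{eq:FormularForMorphismF(pi)}.

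I prefer, however, to verify the two displayed identities by hand, as this also exhibits the adjoint explicitly. For fixed $\xi=\phi_A(f)\cdot a\in\F_c(A)$ one has $\pi(\xi)=\phi_B(f)\pi(a)$, so the integrand of $\pi(\xi)*g$ lies in $\phi_B(f)\cdot B\sbe\F_c(B)$ and has compact support in $t$ by properness; hence $g\mapsto\pi(\xi)*g$ maps $\contc(G,B)$ into $\F_c(B)$. Its candidate adjoint is $\zeta\mapsto\big(t\mapsto\Delta(t)^{-1/2}\pi(\xi)^*\beta_t(\zeta)\big)$, which lands in $\contc(G,B)\sbe B\rtimes_\En G$ again by the wandering condition. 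The second identity then falls out of
\begin{equation*}
\bbraket{\pi(\xi)*g}{\pi(\eta)*h}_{\F_\En(B)}=g^**\bbraket{\pi(\xi)}{\pi(\eta)}*h=\braket{g}{(\pi\rtimes_\En G)(\bbraket{\xi}{\eta}_{B_0})\,h}_{B\rtimes_\En G},
\end{equation*}
using $\Delta(t)^{-1/2}\pi(\xi)^*\beta_t(\pi(\eta))=(\pi\rtimes_\En G)(\bbraket{\xi}{\eta}_{B_0})(t)$; this simultaneously delivers boundedness and adjointability, so $\F_\En(\pi)(\xi)\in\Lb(B\rtimes_\En G,\F_\En(B))$. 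The first identity is associativity of convolution, once one checks $\pi(\xi\cdot\varphi)=\pi(\xi)*(\pi\circ\varphi)$ by pulling $\pi$ through the defining integral of the right action (legitimate in the inductive limit topology, where the integral converges and $\pi$ is norm continuous). This is where $G$-equivariance of $\pi$ is used.

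Granting the morphism, the formula $\pi^G_\En(\EE^\alpha(a))=\EE^\beta(\pi(a))$ is immediate: since $A_c=\F_c(A)\cdot\F_c(A)^*$ and $\EE^\alpha(\xi\eta^*)={_{A_c^{\alpha}}\bbraket{\xi}{\eta}}$ by Lemma~\ref{lem-left-inner-product}, applying $\pi^G_\En=\widetilde{\F_\En(\pi)}$ and Remark~\ref{rem-module-hom} gives $\F_\En(\pi)(\xi)\circ\F_\En(\pi)(\eta)^*$, which the explicit adjoint together with~\eqref{eq-leftinner} identifies with left multiplication by $\EE^\beta(\pi(\xi)\pi(\eta)^*)=\EE^\beta(\pi(\xi\eta^*))$; linearity then yields the claim for all $a\in A_c$. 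For the final assertions: if $\pi\rtimes_\En G$ is injective it is isometric, whence $\F_\En(\pi)$ is isometric by Remark~\ref{rem-module-hom}, and $\pi^G_\En$ is then injective by the standard argument that $\widetilde{\F_\En(\pi)}(T)=0$ forces $(\pi\rtimes_\En G)(\bbraket{\xi}{T\eta}_{B_0})=0$ for all $\xi,\eta$, hence $T=0$. If $\pi\rtimes_\En G$ is nondegenerate, then $\F_\En(\pi)=\id_{\F(X)}\otimes(\pi\rtimes_\En G)$ is nondegenerate because $\overline{(\pi\rtimes_\En G)(A\rtimes_\En G)(B\rtimes_\En G)}=B\rtimes_\En G$, and then $\pi^G_\En$ is nondegenerate by \cite[Example~1.10]{Echterhoff-Raeburn:Multipliers} as recorded in Remark~\ref{rem-module-hom}; finally, nondegeneracy of $\pi$ gives nondegeneracy of $\pi\rtimes_\En G$ in the usual way.

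The step I expect to be the main obstacle is precisely the well-definedness in the \emph{degenerate} case: since $\pi$ need not be nondegenerate, $\pi(\xi)$ lives only in $\M(B)$ and $\pi\rtimes_\En G$ takes values in $\M(B\rtimes_\En G)$, so one must check carefully that $\F_\En(\pi)(\xi)$ is a genuine element of the multiplier module $\Lb(B\rtimes_\En G,\F_\En(B))$ --- that both $\pi(\xi)*g$ and the adjoint land in the correct submodules, and that $\EE^\beta$ makes sense on the multiplier-valued products $\pi(\xi)\pi(\eta)^*\in\contc(X)\cdot\M(B)\cdot\contc(X)$. All of these compact-support arguments rest throughout on properness of the $G$-action on $X$, exactly as in the proofs of Proposition~\ref{prop-full} and Lemma~\ref{lem-left-inner-product}.
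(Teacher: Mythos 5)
Your proposal is correct, and in fact it contains the paper's proof as its first ``conceptual'' paragraph: the paper \emph{defines} $\F_\En(\pi)$ as the morphism $\id_{\F(X)}\otimes(\pi\rtimes_\En G)$ transported through the isomorphisms $\F_\En(A)\cong \F(X)\otimes_{\contz(X)\rtimes G}(A\rtimes_\En G)$ of Proposition~\ref{prop-factor}, and only then derives formula~\eqref{eq:FormularForMorphismF(pi)} by evaluating on elementary tensors. What you do differently is to take the formula as the definition and verify the two axioms of Remark~\ref{rem-module-hom} by hand, exhibiting the candidate adjoint $\zeta\mapsto\bigl(t\mapsto\Delta(t)^{-1/2}\pi(\xi)^*\beta_t(\zeta)\bigr)$ from the start and extracting boundedness and adjointability from the inner-product identity. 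The trade-off is clear: the paper's route gets adjointability, boundedness and compatibility with $\pi\rtimes_\En G$ for free from functoriality of interior tensor products --- which is precisely what absorbs the delicacy of the degenerate case you rightly flag --- whereas your route requires the functional-analytic bookkeeping you only sketch (boundedness of the formal adjoint via $\|S_0\zeta\|^2\leq\|\zeta\|\,\|T_0\|\,\|S_0\zeta\|$, extension by continuity, persistence of the module relations to the completions); these are standard, so the gap is cosmetic. From that point on the two proofs coincide: the paper also computes $\F_\En(\pi)(\eta)^*\zeta|_t=\Delta(t)^{-1/2}\pi(\eta)^*\beta_t(\zeta)$ and obtains $\pi^G_\En(\EE^\alpha(\xi\eta^*))=\EE^\beta(\pi(\xi\eta^*))$ exactly via $A_c=\F_c(A)\F_c(A)^*$ and Remark~\ref{rem-module-hom}, and it handles injectivity and nondegeneracy through the same references (isometry of $\psi$ when $\phi$ is injective; \cite[Example~1.10]{Echterhoff-Raeburn:Multipliers} via the tensor picture). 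Two small corrections to your support arguments: for $\xi=\phi_A(f)\cdot a$ the integrand of $\pi(\xi)*g$ at parameter $t$ lies in $\phi_B(\tau_t(f))\cdot B$, not $\phi_B(f)\cdot B$, and its compact support in $t$ comes from $\supp g$, not from properness; one then gets $\pi(\xi)*g\in\F_c(B)$ because $\bigcup_{t\in K}\tau_t(\supp f)$ is compact for the compact set $K$ determined by $g$. Properness (the wandering condition) is genuinely used only where you say it is for the adjoint, namely to see that $t\mapsto\bar f\,\tau_t(h)$ has compact support.
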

\begin{proof}
The \Star{}homomorphism $\pi\rtimes_\En G$ induces a compatible morphism
$$\id_{\F(X)}\otimes\pi\rtimes_\En G\colon\F(X)\otimes_{\contz(X)\rtimes G}(A\rtimes_\En G)\to \M(\F(X)\otimes_{\contz(X)\rtimes G}(B\rtimes_\En G))$$
which maps $f\otimes h\in \contc(X)\otimes \contc(G,A)$ to $f\otimes (\pi\rtimes_\En G)(h)$. Under the canonical isomorphisms $\F(X)\otimes_{\contz(X)\rtimes G}(A\rtimes_\En G)\cong \F_\En(A)$ and $\F(X)\otimes_{\contz(X)\rtimes G}(B\rtimes_\En G)\cong \F_\En(B)$ this therefore induces a compatible morphism $\F_\En(\pi)\colon \F_\En(A)\to \M(\F_\En(B))$
and the description of the above decompositions for $\F_\En(A)$ and $\F_\En(B)$ in (\ref{eq-decom})
together with a straightforward computation
shows that $\F_\En(\pi)$ is given
by \eqref{eq:FormularForMorphismF(pi)}. By the above remark, $\F_\En(\pi)$ induces a \Star{}homomorphism $\pi^G_\En\colon A^G_\En=\K(\F_\En(A))\to \M(\K(\F_\En(B)))=\M(B^G_\En)$ determined by
$\pi^G_\En(_{\K}\braket{\xi}{\eta})=\F_\En(\pi)(\xi)\circ\F_\En(\pi)(\eta)^*$ for all $\xi,\eta\in \F_c(A)$.
Now, the adjoint operator $\F_\En(\pi)(\eta)^*$ is easily seen to be given by
$$\F_\En(\pi)(\eta)^*\zeta|_t=
\bbraket{\pi(\eta)}{\zeta}_{\contc(G,B)}(t)=\Delta(t)^{-1/2}\pi(\eta)^*\beta_t(\zeta)$$
 for all $\zeta\in \F_c(B)$. Hence, for $a=\xi\eta^*\in A_c$, we get
\begin{multline*}
\pi^G(\EE^\alpha(a))\zeta=\pi^G_\En(_{\K}\braket{\xi}{\eta})\zeta=\F_\En(\pi)(\xi)\circ\F_\En(\pi)(\eta)^*\zeta\\
=\pi(\xi)*\bbraket{\pi(\eta)}{\zeta}_{\contc(G,B)}=\EE^\beta(\pi(\xi\eta^*))\zeta=\EE^\beta(\pi(a))\zeta.
\end{multline*}
The last assertion concerning injectivity follows from Remark~\ref{rem-module-hom}.
The assertion about nondegeneracy follows from the fact that if $\pi$ is nondegenerate, then so is $\pi\rtimes_\En G$ and hence also $\id_{\F(X)}\otimes\pi\rtimes_\En G\cong \F_\En(\pi)$ and its induced homomorphism $\pi^G$ on compact operators.
\end{proof}

\begin{remark}\label{rem:Fix-Functor}
{\bf (1)} Given $x\in \contc(X)\cdot \M(B)$ (for a weakly proper $X\rtimes G$-algebra $B$), notice that $x$ may be viewed as an element of $\M(\F_\En(B))$  (that is, a multiplier of the Hilbert module $\F_\En(B)$) by the formula $x\cdot g\defeq \int_G\Delta(t)^{-1/2}\beta_t(xg(t^{-1}))\dd{t}$ (and adjoint given by $x^*\zeta|_t=\Delta(t)^{-1/2}x^*\beta_t(\zeta)$ for all $\zeta\in \F_c(B)$). In this way, the formula~\eqref{eq:FormularForMorphismF(pi)} for the morphism $\F_\En(\pi)$
can be described more easily as $\F_\En(\pi)(\xi)=\pi(\xi)$ for all $\xi\in \F_c(A)$, \ie, the map $\F_\En(\pi)$ is essentially $\pi$ when restricted to $\F_c(A)\subseteq A$.
The homomorphism $\pi^G_\En\colon A^G_\En\to\M(B^G_\En)$ may also be interpreted similarly. In fact, if $\pi$ is nondegenerate, then $\pi^G_\En(\EE^\alpha(a))=\EE^\beta(\pi(a))=\pi(\EE^\alpha(a))$ for all $a\in A_c$, so that $\pi^G_\En$ coincides with the extension of $\pi$ to $\M(A)$ on $A^G_c\subseteq \M(A)$.
In general, for a possibly degenerate \Star{}homomorphism $\pi\colon A\to\M(B)$, we can consider the
bidual von Neumann algebras $A''$ and $B''$ and the weakly continuous extension $\pi''\colon A''\to B''$ of $\pi$.
Then $\EE^\beta(\pi(a))=\pi''(\EE^\alpha(a))$ for all $a\in A_c$ because $\EE^\alpha(a)=\int^\st_G\alpha_t(a)\dd{t}$ may be interpreted as a weak limit in $\M(A)\sbe A''$.
Therefore we can still say that $\pi^G_\En$ is the restriction of $\pi''$ to $A^G_c$.

{\bf (2)} For nondegenerate $G$\nb-equivariant homomorphisms $\pi\colon A\to \M(B)$ and reduced norms (\ie, for $\En=B_\red(G)$), the existence of $\pi^G_\red\colon A^G_\red\to \M(B^G_\red)$ was already obtained in \cite[Proposition~2.6]{Kaliszewski-Quigg-Raeburn:ProperActionsDuality} using a more direct approach (which makes the proof much more technical and involved). The proof in this case could  have been done also by using the above idea and the tensor decomposition $\F_\red(A)\cong \F(X)\otimes_{\contz(X)\rtimes G}(A\rtimes_\red G)$ already available from \cite[Theorem~7.1]{Meyer:Generalized_Fixed}.
\end{remark}

The above result already indicates that the assignment $A\mapsto A^G_\En$ is a functor between two different types of \cstar{}categories. To be more precise, for a fixed locally compact group and a $G$\nb-space $X$, let us consider the categories $\Cstd{X}{G}$ and $\Cstnd{X}{G}$ whose objects (in both) are weak $X\rtimes G$-algebras and whose morphisms $A\to B$ in
$\Cstd{X}{G}$ are $X\rtimes G$-equivariant \Star{}homomorphisms from $A$ into $B$, and in $\Cstnd{X}{G}$ are $X\rtimes G$-equivariant \emph{nondegenerate} \Star{}homomorphism
$A\to \M(B)$. In particular, considering the trivial group $\{e\}$ and a space $Y$, we may consider the categories $\CCstd{Y}\defeq \Cstd{Y}{\{e\}}$ and
$\CCstnd{Y}{}\defeq \Cstd{Y}{\{e\}}$ consisting of weak $Y$-algebras (\ie, \cstar{}algebras $A$ endowed with a nondegenerate \Star{}homomorphism $\contz(Y)\to \M(A)$) and whose
morphisms are either $Y$-equivariant \Star{}homomorphisms $A\to B$ or nondegenerate $Y$-equivariant \Star{}homomorphisms $A\to \M(B)$. In the special case that $Y=\{\pt\}$ is the one-point space, we get the ordinary categories $\CCCstd$ and $\CCCstnd$ of \cstar{}algebras with \Star{}homomorphisms or nondegenerate \Star{}homomorphisms as their morphisms.

\begin{corollary}
If $G$ acts properly on a space $X$ and $\|\cdot \|_\En$ denotes a \cstar{}norm coming from a nonzero $G$\nb-invariant
weak-$^*$-closed
ideal $\En\sbe B(G)$,
then the assignments $A\mapsto A^G_\En$ and $\pi\mapsto \pi^G_\En$ constructed above yield functors $\Cstd{X}{G}\to \CCstd{G\bs X}$ and $\Cstnd{X}{G}\to \CCstnd{G\bs X}$.
\end{corollary}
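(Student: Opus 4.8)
The plan is to verify the three defining properties of a functor: that the object assignment $A\mapsto A^G_\En$ produces an object of the target category, that $\pi\mapsto\pi^G_\En$ produces a morphism there, and that identities and composition are preserved. Throughout I would lean on the decomposition $\F_\En(A)\cong\F(X)\otimes_{\contz(X)\rtimes G}(A\rtimes_\En G)$ of Proposition~\ref{prop-factor} and on the construction of $\F_\En(\pi)$ and $\pi^G_\En$ in Proposition~\ref{prop-functorFA}.

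For the object level, recall that $A^G_\En=\K(\F_\En(A))$. In the decomposition above $\F(X)$ is a $\contz(G\bs X)$-$\contz(X)\rtimes_\tau G$ imprimitivity bimodule, so $\contz(G\bs X)=\K(\F(X))$ acts on the left of $\F(X)$ and hence on the left of $\F_\En(A)$, yielding a nondegenerate \Star{}homomorphism $\contz(G\bs X)\to\Lb(\F_\En(A))=\M(A^G_\En)$. This is the structure map making $A^G_\En$ a weak $G\bs X$\nb-algebra. Concretely, on the dense submodule $\F_c(A)=\contc(X)\cdot A$ a function $\psi\in\contc(G\bs X)$ acts by $\xi\mapsto\psi\cdot\xi$, and nondegeneracy is immediate since every $\xi=f\cdot a\in\F_c(A)$ satisfies $\psi\cdot\xi=\xi$ for a suitable $\psi\in\contc(G\bs X)$.

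Next I would treat the morphism level. Proposition~\ref{prop-functorFA} already builds $\pi^G_\En$, which lands in $B^G_\En$ when $\pi$ is an ordinary homomorphism into $B$ and in $\M(B^G_\En)$ (nondegenerately, when $\pi$ is nondegenerate) in the multiplier case; it only remains to check $G\bs X$\nb-equivariance. Using the description $\F_\En(\pi)(\xi)=\pi(\xi)$ for $\xi\in\F_c(A)$ of Remark~\ref{rem:Fix-Functor} together with the equivariance of $\pi$ (which intertwines the $\contz(X)$-actions), one gets $\F_\En(\pi)(\psi\cdot\xi)=\pi(\psi\cdot\xi)=\psi\cdot\pi(\xi)=\psi\cdot\F_\En(\pi)(\xi)$ for all $\psi\in\contc(G\bs X)$. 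Thus $\F_\En(\pi)$ intertwines the left $\contz(G\bs X)$-actions, and passing to compact operators via ${}_{\K}\braket{\xi}{\eta}\mapsto\F_\En(\pi)(\xi)\circ\F_\En(\pi)(\eta)^*$ shows that $\pi^G_\En$ respects the structure maps, \ie\ is a morphism in $\CCstd{G\bs X}$ (resp.\ $\CCstnd{G\bs X}$).

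Finally I would settle functoriality. The formula $\F_\En(\id_A)(\xi)=\xi$ immediately gives $\F_\En(\id_A)=\id_{\F_\En(A)}$ and hence $(\id_A)^G_\En=\id_{A^G_\En}$. For composable morphisms $\pi$ and $\rho$, the description $\F_\En(\pi)(\xi)=\pi(\xi)$ on the dense set $\F_c$ yields $\F_\En(\rho\circ\pi)=\overline{\F_\En(\rho)}\circ\F_\En(\pi)$, and applying the induction on compact operators of Remark~\ref{rem-module-hom} gives $(\rho\circ\pi)^G_\En=\rho^G_\En\circ\pi^G_\En$. The hard part will be making this last identity precise in $\Cstnd{X}{G}$: for \emph{degenerate} $\pi$ one cannot extend naively to $\M(A)$, so the correct reading of $\F_\En(\pi)(\xi)=\pi(\xi)$ inside $\M(\F_\En(B))$ must go through the weakly continuous bidual extension $\pi''$ as in Remark~\ref{rem:Fix-Functor}; once the composites are interpreted at the multiplier level in this way, the verification reduces to a routine computation on the dense submodules $\F_c$.
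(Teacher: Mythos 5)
Your proof is correct and takes essentially the same route as the paper: the paper's own (very terse) proof of this corollary rests exactly on Proposition~\ref{prop-functorFA} and the dense-submodule description $\F_\En(\pi)(\xi)=\pi(\xi)$ from Remark~\ref{rem:Fix-Functor}, declaring the $G\bs X$\nb-equivariance and compositionality ``clear''/``an easy exercise'', which you fill in explicitly (your identification of the $\contz(G\bs X)$-structure on $A^G_\En$ via $\K(\F(X))\cong\contz(G\bs X)$ acting through the decomposition of Proposition~\ref{prop-factor} is a harmless, equivalent substitute for reading it off Definition~\ref{def-dense-algebra}). One remark: your closing worry about degenerate $\pi$ is moot for this corollary, since morphisms in $\Cstnd{X}{G}$ are nondegenerate by definition and morphisms in $\Cstd{X}{G}$ map into $B$ itself, so composition never requires the bidual extension $\pi''$ of Remark~\ref{rem:Fix-Functor}.
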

\begin{proof}
For a $X\rtimes G$-equivariant \Star{}homomorphism $\pi\colon A\to B$ it is clear that $\F_\En(\pi)$ maps $\F(A)$ into $\F(B)$ and hence that $\pi^G$ maps $A^G_\En$ into $B^G_\En$.
It is also clear that these maps are $G\bs X$-equivariant. An easy exercise shows that $\pi\mapsto \F_\En(\pi)$ (and hence also $\pi\mapsto \pi^G_\En$) respects composition, \ie, $\F_\En(\pi\circ\rho)=\F_\En(\pi)\circ \F_\En(\rho)$. This yields the functor $\Cstd{X}{G}\to \CCstd{G\bs X}$ and the functor $\Cstnd{X}{G}\to \CCstnd{G\bs X}$ is obtained similarly.
\end{proof}

\section{Categorical Landstad Duality}

In this section we interpret our main result on Landstad duality (Theorem~\ref{thm-landstad}) in categorical terms extending to exotic norms one of the main results by Kaliszewski, Quigg and Raeburn in \cite{Kaliszewski-Quigg-Raeburn:ProperActionsDuality} for reduced generalized fixed-point algebras.

For a fixed locally compact group $G$, we let $G$ act on it self by right translation and consider the categories $\Cstd{G}{G}$ and $\Cstnd{G}{G}$
(already considered in the previous section). Both categories have the same objects, namely, weak $G\rtimes G$\nb-algebras, the only difference between them are their morphisms
which are either $G\rtimes G$\nb-equivariant \Star{}homomorphisms or nondegenerate homomorphisms into multiplier algebras.

Dually, we consider the categories $\CCstd{\dualG}$ and $\CCstnd{\dualG}$ whose objects are (in both) \emph{$\dualG$\nb-algebras}, that is, pairs $(B,\delta)$ consisting of a \cstar{}algebra $B$ and a $G$\nb-coaction $\delta$ on it. The morphisms in $\CCstd{\dualG}$ are $\dualG$\nb-equivariant \Star{}homomorphisms and in $\CCstnd{\dualG}$ are nondegenerate
$\dualG$\nb-equivariant \Star{}homomorphisms into multiplier algebras, where $\dualG$\nb-equivariance of a \Star{}homomorphism between two $\dualG$\nb-algebras means that it commutes with the underlying coactions
in the usual sense (see \cite{Echterhoff-Kaliszewski-Quigg-Raeburn:Categorical} for details).

As observed before, for a given $\dualG$\nb-algebra, the crossed product $B\rtimes_\delta \dualG$ carries a
canonical structure as a $G\rtimes G$\nb-algebra given by the dual $G$\nb-action and the canonical $\contz(G)$-embedding $j_{\contz(G)}$ into $\M(B\rtimes_\delta \dualG)$.
Moreover, it is well-known (see \cite{Echterhoff-Kaliszewski-Quigg-Raeburn:Categorical} for further details)
that the crossed product construction $(B,\delta)\mapsto B\rtimes_\delta \dualG$ may be viewed as a
functor $\CP_{\dualG}$ between the categories $\CCstd{\dualG}\to \Cstd{G}{G}$ and $\CCstnd{\dualG}\to \Cstnd{G}{G}$.

From now on, we fix a crossed-product norm $\|\cdot\|_\En$ associated to a nonzero $G$\nb-invariant \emph{ideal} $E\sbe B(G)$.
For such an $\En$, we are interested in the full subcategories $\CCstd{\dualG}_\En$ and $\CCstnd{\dualG}_\En$ of $\CCstd{\dualG}$ and $\CCstnd{\dualG}$, whose objects are $\En$-coactions as defined in  Definition \ref{def-E-dual}.

Given a weak $G\rtimes G$\nb-algebra $(A,\alpha)$, Theorem~\ref{thm-landstad} implies that the $\En$-generalized fixed-point algebra $A^G_\En$ carries an $\En$-coaction $\delta_\En$, that is, it may be viewed as an object of $\CCstd{\dualG}_\En$ or $\CCstnd{\dualG}_\En$. Moreover, we already know that $\Fix^G_\En$ may be viewed as a functor $\Cstd{G}{G}\to \CCCstd$ or $\Cstnd{G}{G}\to \CCCstnd$. We want to view $\Fix^G_\pn$ as a functor $\Cstd{G}{G}\congto \CCstd{\dualG}_\En$ or $\Cstnd{G}{G}\congto \CCstnd{\dualG}_\En$.
For this the only missing point is to see that $\Fix^G_\En$ is well-defined on the level of morphisms, \ie, if $\pi$ is a morphism from $A$ to $B$ in $\Cstd{G}{G}$ or $\Cstnd{G}{G}$, then the induced morphism $\pi^G_\En$ from $A^G_\En$ to $B^G_\En$ (in $\CCCstd$ or $\CCCstnd$) is $\dualG$\nb-equivariant and hence a morphism in
$\CCstd{\dualG}_\En$ or $\CCstnd{\dualG}_\En$. This will follow from the following lemma:

\begin{lemma}
If $A$ and $B$ are weak $G\rtimes G$\nb-algebras and $\pi\colon A\to\M(B)$ is a $G\rtimes G$\nb-equivariant \Star{}homomorphism,
then the induced \Star{}homomorphism $\pi^G_\En\colon A^G_\En\to\M(B^G_\En)$ is $\dualG$\nb-equivariant.
\end{lemma}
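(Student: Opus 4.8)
The plan is to deduce the $\dualG$\nb-equivariance of $\pi^G_\En$ from the corresponding equivariance of the underlying Hilbert-module morphism $\F_\En(\pi)$. Recall that the coaction $\delta_\En$ on $A^G_\En=\K(\F_\En(A))$ is the one induced by the module coaction $\delta_{\F_\En(A)}$ of Lemma~\ref{lem-wg}, so that (as in Remark~\ref{rem-formulacoact}) one has $\delta_\En({}_{A^G_\En}\bbraket{\xi}{\eta})={}_{\M}\bbraket{\delta_{\F_\En(A)}(\xi)}{\delta_{\F_\En(A)}(\eta)}$ for $\xi,\eta\in\F_c(A)$, and likewise for $B$; and that by Proposition~\ref{prop-functorFA} the map $\pi^G_\En$ is induced on compact operators by $\F_\En(\pi)$, i.e.\ $\pi^G_\En({}_{\K}\braket{\xi}{\eta})=\F_\En(\pi)(\xi)\circ\F_\En(\pi)(\eta)^*$. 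Since the elements ${}_{A^G_\En}\bbraket{\xi}{\eta}$ span a dense $*$\nb-subalgebra of $A^G_\En$, it suffices to prove the module-level identity
$$(\F_\En(\pi)\otimes\id_G)\circ\delta_{\F_\En(A)}=\delta_{\F_\En(B)}\circ\F_\En(\pi)$$
as maps $\F_\En(A)\to\M(\F_\En(B)\otimes\Cst(G))$, and then to apply $\pi^G_\En\otimes\id_G$ to the left $\K$\nb-valued inner products, using that the homomorphism induced by $\F_\En(\pi)\otimes\id_G$ carries ${}_\M\bbraket{X}{Y}$ to ${}_\M\bbraket{(\F_\En(\pi)\otimes\id_G)X}{(\F_\En(\pi)\otimes\id_G)Y}$.

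To verify this identity I would work on the dense subspace $\F_c(A)=\contc(G)\cdot A$. By Remark~\ref{rem:Fix-Functor}(1) the morphism $\F_\En(\pi)$ is simply $\xi\mapsto\pi(\xi)$, where for $\xi=\phi_A(f)a$ the element $\pi(\xi)=\phi_B(f)\pi(a)\in\contc(G)\cdot\M(B)$ is viewed as a multiplier of $\F_\En(B)$; here I use that $G\rtimes G$\nb-equivariance of $\pi$ means precisely $\bar\pi\circ\phi_A=\phi_B$ for the (strict, or bidual) extension $\bar\pi$. By Lemma~\ref{lem-wg} we have $\delta_{\F_\En(A)}(\xi)=\phi_A\otimes\id_G(w_G)(\xi\otimes 1)$ and $\delta_{\F_\En(B)}(\pi(\xi))=\phi_B\otimes\id_G(w_G)(\pi(\xi)\otimes 1)$. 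Pairing against $1\otimes z$ for $z\in\Cst(G)$ and invoking \eqref{eq-FAc}, both sides land in $\F_c(A\otimes\Cst(G))$, resp.\ $\F_c(B\otimes\Cst(G))$; under the identification $\F_\En(B)\otimes\Cst(G)\cong\F_\qn(B\otimes\Cst(G))$ of Lemma~\ref{lem-tensor} the operator $\F_\En(\pi)\otimes\id_G$ becomes $\F_\qn(\pi\otimes\id_G)$, which on $\F_c(A\otimes\Cst(G))$ is again just $\pi\otimes\id_G$ by Remark~\ref{rem:Fix-Functor}(1). The computation then collapses to the single identity
$$(\pi\otimes\id_G)\big(\phi_A\otimes\id_G(w_G(f\otimes z))(a\otimes 1)\big)=\phi_B\otimes\id_G(w_G(f\otimes z))(\pi(a)\otimes 1),$$
which holds because $w_G(f\otimes z)\in\contc(G,\Cst(G))$, $\bar\pi\circ\phi_A=\phi_B$, and $(\pi\otimes\id_G)(a\otimes 1)=\pi(a)\otimes 1$.

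Since $z\in\Cst(G)$ is arbitrary and the multipliers $1\otimes z$ act nondegenerately, this gives $(\F_\En(\pi)\otimes\id_G)(\delta_{\F_\En(A)}(\xi))=\delta_{\F_\En(B)}(\F_\En(\pi)(\xi))$ for every $\xi\in\F_c(A)$, hence for all $\xi\in\F_\En(A)$ by continuity. Feeding this back into the first paragraph, for $m={}_{A^G_\En}\bbraket{\xi}{\eta}$ with $\xi,\eta\in\F_c(A)$ we obtain
$$(\pi^G_\En\otimes\id_G)\delta_\En(m)={}_\M\bbraket{\delta_{\F_\En(B)}(\F_\En(\pi)(\xi))}{\delta_{\F_\En(B)}(\F_\En(\pi)(\eta))}=\delta_\En\big(\pi^G_\En(m)\big),$$
and by density of $A^G_c$ together with continuity of both composite maps, $\pi^G_\En$ intertwines the two coactions, i.e.\ is $\dualG$\nb-equivariant.

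The step I expect to require the most care is the tensorial bookkeeping in the second paragraph: the identification of $\F_\En(\pi)\otimes\id_G$ with $\F_\qn(\pi\otimes\id_G)$ under the isomorphisms of Lemma~\ref{lem-tensor}, and the justification that $\delta_{\F_\En(B)}$ extends to the multiplier $\pi(\xi)\in\M(\F_\En(B))$ and is still computed there by $\phi_B\otimes\id_G(w_G)(\pi(\xi)\otimes 1)$. Once these identifications are in place, everything else is a formal consequence of the compatibility $\bar\pi\circ\phi_A=\phi_B$ with the structure maps and of the density of $\F_c$. (Equivalently, one could argue entirely at the algebra level via the conjugation formula $\delta_\En(m)=\phi\otimes\id_G(w_G)(m\otimes 1)\phi\otimes\id_G(w_G^*)$ of Remark~\ref{rem-formulacoact} together with the fact that $\pi^G_\En$ restricts to $\bar\pi$ on $A^G_c$ from Remark~\ref{rem:Fix-Functor}(1); but that route needs the same Lemma~\ref{lem-tensor} identifications and, in addition, a bidual-tensor argument, so I prefer the module-level formulation above.)
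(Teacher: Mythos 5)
Your proposal is correct and follows essentially the same route as the paper's proof: both reduce the $\dualG$\nb-equivariance of $\pi^G_\En$ to the module-level identity $(\F_\En(\pi)\otimes\id_G)\circ\delta_{\F_\En(A)}=\delta_{\F_\En(B)}\circ\F_\En(\pi)$ on $\F_c(A)$, verified by pairing with $1\otimes z$, using the formula $\delta_{\F_\En(A)}(\xi)=(\phi_A\otimes\id_G)(w_G)(\xi\otimes 1)$ from Lemma~\ref{lem-wg}, the identification $\F_\En(\pi)(\xi)=\pi(\xi)$ of Remark~\ref{rem:Fix-Functor}, and the intertwining $\pi(\phi_A(f)a)=\phi_B(f)\pi(a)$. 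The tensorial bookkeeping you flag as delicate (the Lemma~\ref{lem-tensor} identifications and the extension of $\delta_{\F_\En(B)}$ to multipliers in $\phi_B(\contc(G))\M(B)$) is handled in the paper in exactly the same way, so there is no gap.
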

\begin{proof}
By construction (see the proof of Proposition~\ref{prop-functorFA}), the homomorphism \linebreak
$\pi^G_\En\colon A^G_\En\to \M(B^G_\En)$ is  induced on the algebras of compact operators by the morphism $\tilde\pi\defeq \F^G_\En(\pi)\colon \F^G_\En(A)\to\M(\F^G_\En(B))$ given as in Proposition~\ref{prop-functorFA}, and the coactions on $A^G_\En$ and $B^G_\En$ are induced by the corresponding $G$\nb-coactions $\delta_{\F^G_\En(A)}$ on $\F^G_\En(A)$ and $\delta_{\F^G_\En(B)}$ on $\F^G_\En(B)$ as defined in Lemma~\ref{lem-wg}.
Hence, it is enough to see that $\F(\pi)$ is $\dualG$\nb-equivariant with respect to the $G$\nb-coactions $\delta_{\F^G_\En(A)}$ and $\delta_{\F^G_\En(B)}$, \ie, it is enough to show that
$$(\tilde\pi\otimes\id)\big(\delta_{\F^G_\En(A)}(\xi)\big)=\delta_{\F^G_\En(B)}(\tilde\pi(\xi))\quad\mbox{for all }\xi\in \F^G_c(A).$$
For this we use the formula $\delta_{\F^G_\En(A)}(\xi)=(\phi_A\otimes\id)(\omega_G)(\xi\otimes 1)$ for the coaction $\delta_{\F^G_\En(A)}$, where $\phi_A$ denotes the structural map $\contz(G)\to\M(A)$. As in the proof of Lemma~\ref{lem-wg}, the element $(\phi_A\otimes\id)(\omega_G)(\xi\otimes 1)$ is (rigorously) in $\M(A\otimes C^*(G))$,
but is actually interpreted as an element in $\M(\F^G_\En(A)\otimes C^*(G))$. Similarly, we have the formula $\delta_{\F^G_\En(B)}(\eta)=(\phi_B\otimes\id)(\omega_G)(\eta\otimes 1)$ for all $\eta\in \F^G_c(B)$. Moreover, this same formula (with a similar interpretation) also holds for $\eta\in \phi_B(\contc(G))\M(B)$, so that we may also apply it for $\eta=\tilde\pi(\xi)=\pi(\xi)$ (using the interpretation for $\tilde\pi=\F^G_\En(\pi)$ given in Remark~\ref{rem:Fix-Functor}).
Using this and the equivariance of $\pi$ with respect to the $\contz(G)$-homomorphisms $\phi_A$ and $\phi_B$ (that is, the fact that $\pi(\phi_A(f) a)=\phi_B(f)\pi(a)$ for all $a\in A$), we get, for every element $z\in C^*(G)$,
\begin{align*}
(\tilde\pi\otimes\id)\big(\delta_{\F^G_\En(A)}(\xi)\big)(1\otimes z)&=(\pi\otimes\id)\big(\phi_A\otimes\id(\omega_G)(\xi\otimes z)\big)\\
&=(\phi_B\otimes\id)(\omega_G)(\pi(\xi)\otimes z)=\delta_{\F^G_\En(B)}(\tilde\pi(\xi))(1\otimes z).
\end{align*}
Since $z\in C^*(G)$ was arbitrary, this yields the desired $\dualG$\nb-equivariance of $\tilde\pi$.
\end{proof}

The above lemma implies that the assignment $\Fix^G_\En$ which maps $A$ to $A^G_\En$ and a morphism $\pi$ from $A$ to $B$ to $\pi^G_\En$
may be viewed as a functor $\Cstd{G}{G}\to \CCstd{\dualG}_\En$ or $\Cstnd{G}{G}\to \CCstnd{\dualG}_\En$.
The following result extends to exotic generalized fixed-point algebras some results which have been obtained in \cite{Kaliszewski-Quigg-Raeburn:ProperActionsDuality}*{Theorem~4.2} in the case of reduced generalized fixed-point algebras:

\begin{theorem}[Categorical Landstad Duality]
For a locally compact group $G$ and a crossed-product norm $\|\cdot\|_\En$ associated to a nonzero $G$\nb-invariant ideal $\En\sbe B(G)$,
the functor $\Fix_\En^G\colon (A,\alpha)\mapsto (A^G_\En,\delta_\En)$ is an equivalence between the categories $\Cstd{G}{G}\congto \CCstd{\dualG}_\En$ and $\Cstnd{G}{G}\congto \CCstnd{\dualG}_\En$.
The crossed product functor $\CP_{\dualG}\colon (B,\delta)\mapsto B\rtimes_\delta\dualG$ is a quasi-inverse functor of $\Fix^G_\En$.
\end{theorem}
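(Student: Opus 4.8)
The plan is to exhibit the crossed-product functor $\CP_{\dualG}$ as a genuine quasi-inverse of $\Fix^G_\En$ by producing natural isomorphisms $\epsilon\colon \CP_{\dualG}\circ\Fix^G_\En\Rightarrow\Id$ and $\eta\colon \Id\Rightarrow\Fix^G_\En\circ\CP_{\dualG}$; since every component of each will be an isomorphism, this is exactly what it means for the two functors to be quasi-inverse equivalences. The components are already available from the earlier results. For a weak $G\rtimes G$\nb-algebra $(A,\alpha)$, Theorem~\ref{thm-landstad}(3) gives the isomorphism $\epsilon_A:=k_A\rtimes\phi_A\colon \CP_{\dualG}(\Fix^G_\En(A))=A^G_\En\rtimes_{\delta_\En}\dualG\congto A$ of $G\rtimes G$\nb-algebras; and for an $\En$\nb-coaction $(B,\delta)$, the converse part of Theorem~\ref{thm-landstad} together with Lemma~\ref{lem-Mor1} gives an isomorphism $\eta_B\colon (B,\delta)\congto \Fix^G_\En(\CP_{\dualG}(B))=((B\rtimes_\delta\dualG)^G_\En,\delta_\En)$ of $\En$\nb-coactions, whose inverse $\eta_B^{-1}$ is the isomorphism $(B\rtimes_\delta\dualG)^G_\En\congto B$ of Lemma~\ref{lem-Mor1}. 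It remains only to prove that $\epsilon$ and $\eta$ are natural. The same formulas and arguments apply verbatim in the nondegenerate categories $\Cstnd{G}{G}$ and $\CCstnd{\dualG}_\En$, using the nondegeneracy statements of Proposition~\ref{prop-functorFA} and Proposition~\ref{prop-functor}, so I would treat both cases at once.

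Naturality of $\epsilon$ is the easy half, and I would check it on generators. Let $\pi\colon A\to\M(A')$ be a $G\rtimes G$\nb-equivariant morphism, so that $\CP_{\dualG}(\Fix^G_\En(\pi))=\pi^G_\En\rtimes\dualG$; one must verify $\pi\circ\epsilon_A=\epsilon_{A'}\circ(\pi^G_\En\rtimes\dualG)$. It suffices to test this on the generators $j_{A^G_\En}(m)$ with $m\in A^G_c$ and $j_{\contz(G)}(f)$ with $f\in\contz(G)$. On the first, the left-hand side equals $\pi(k_A(m))=\pi(m)$ since $k_A$ restricts to the inclusion $A^G_c\into\M(A)$, while the right-hand side equals $k_{A'}(\pi^G_\En(m))$; these agree because, by Proposition~\ref{prop-functorFA} and Remark~\ref{rem:Fix-Functor}(1), $\pi^G_\En$ is the restriction to $A^G_c$ of the (bidual) extension of $\pi$, and $k_{A'}$ is again the inclusion. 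On the second, the two sides give $\pi(\phi_A(f))$ and $\phi_{A'}(f)$, which coincide precisely by the $\contz(G)$\nb-equivariance built into the notion of a morphism in $\Cstd{G}{G}$.

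The main obstacle is the naturality of $\eta$, since $\eta_B^{-1}$ is produced only indirectly through the chain of module identifications in Lemma~\ref{lem-Mor1}. I would reduce it to a single module-level statement: for a $\dualG$\nb-equivariant $\theta\colon B\to\M(C)$, writing $\Xi_B\colon\F_\En(B\rtimes_\delta\dualG)\congto B\otimes\F(G)$ for the isomorphism of Lemma~\ref{lem-Mor1} (and similarly $\Xi_C$), the square relating the module morphism $\F_\En(\theta\rtimes\dualG)$ of Proposition~\ref{prop-functorFA} to $\theta\otimes\id_{\F(G)}$ commutes, i.e.
$$\Xi_C\circ\F_\En(\theta\rtimes\dualG)=(\theta\otimes\id_{\F(G)})\circ\Xi_B.$$
Passing to compact operators via Remark~\ref{rem-module-hom} (which sends such a compatible module morphism to the induced homomorphism on $\K$) then yields $\eta_C^{-1}\circ(\theta\rtimes\dualG)^G_\En=\theta\circ\eta_B^{-1}$, which is exactly the naturality of $\eta^{-1}$, hence of $\eta$, since $\K(B\otimes\F(G))\cong B$ via the $\C$–$\K(L^2G)$ equivalence $\F(G)$.

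To prove the displayed identity I would unwind $\Xi_B$ as the composite of the three isomorphisms of Lemma~\ref{lem-Mor1}: first $\F_\En(B\rtimes_\delta\dualG)\cong\F(G)\otimes_{\contz(G)\rtimes G}(B\rtimes_\delta\dualG\rtimes_{\dual\delta,\En}G)$ of Proposition~\ref{prop-factor}; then the $\En$\nb-Katayama isomorphism $\Phi_\En^B\colon B\rtimes_\delta\dualG\rtimes_{\dual\delta,\En}G\congto B\otimes\K(L^2G)$; and finally the identification $\F(G)\otimes_{\contz(G)\rtimes G}(B\otimes\K(L^2G))\cong B\otimes\F(G)$ coming from $M\rtimes\rho\colon\contz(G)\rtimes G\congto\K(L^2G)$. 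For the first step, Proposition~\ref{prop-functorFA} constructs $\F_\En(\theta\rtimes\dualG)$ precisely as $\id_{\F(G)}\otimes\big((\theta\rtimes\dualG)\rtimes_{\dual\delta,\En}G\big)$ under this decomposition, so that step is natural by construction; the third step is visibly natural, as the identification touches only the $\K(L^2G)$\nb-factor through the fixed map $M\rtimes\rho$ while $\theta$ acts on the $B$\nb-factor. The whole question thus rests on the naturality of the middle (Katayama) step, $\Phi_\En^C\circ\big((\theta\rtimes\dualG)\rtimes_{\dual\delta,\En}G\big)=(\theta\otimes\id_{\K(L^2G)})\circ\Phi_\En^B$, which is the standard naturality of Katayama duality, visible directly from the formula for $\Phi_B$ in~\eqref{eq-dualhom} and the description of the coaction on $B\otimes\K(L^2G)$ in~\eqref{eq:CoactionOnBotimesK}, since $\Phi$ is assembled functorially from $\delta$ and the fixed representations $\lambda,\rho,M$. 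Composing the three natural steps gives naturality of $\Xi$, hence of $\eta$, and together with the second paragraph this completes the proof. I expect the only genuine bookkeeping difficulty to be the multiplier subtleties for degenerate $\theta$, to be handled exactly as in Proposition~\ref{prop-functor}.
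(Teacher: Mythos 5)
Your proposal is correct, but it takes a genuinely different route from the paper's. The paper never touches the naturality of the unit $\eta$: it proves that $\Fix^G_\En$ is essentially surjective (this is the converse part of Theorem~\ref{thm-landstad}) and then full and faithful, by showing that $\pi\mapsto\pi^G_\En$ is a bijection on morphism sets; both injectivity and surjectivity come from the single commuting square $\pi\circ(\kappa_A\rtimes\phi_A)=(\kappa_B\rtimes\phi_B)\circ(\pi^G_\En\rtimes\dualG)$, verified on elements $j_{A^G_\En}(\EE_A(a))j_{\contz(G)}(f)$ --- which is exactly your naturality of $\epsilon$. The quasi-inverse statement then follows by abstract nonsense: the same square shows $\CP_{\dualG}\circ\Fix^G_\En\cong\Id$ naturally, so $\CP_{\dualG}$ is a left quasi-inverse, and a left quasi-inverse of an equivalence is automatically a two-sided quasi-inverse. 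You instead exhibit both natural isomorphisms explicitly, reducing the naturality of $\eta$ to the module-level identity $\Xi_C\circ\F_\En(\theta\rtimes\dualG)=(\theta\otimes\id_{\F(G)})\circ\Xi_B$ and ultimately to naturality of the Katayama map. That reduction is sound: your three-step decomposition of $\Xi_B$ is faithful to Lemma~\ref{lem-Mor1}, the first step is natural by the very construction of $\F_\En(\pi)$ in Proposition~\ref{prop-functorFA}, the third only touches the $\K(L^2G)$-leg, and the Katayama square does commute --- check it on $j_B(b)$, $j_{\contz(G)}(f)$ and the group elements, using $\dualG$-equivariance of $\theta$ in the form $\delta_C\circ\theta=(\theta\otimes\id)\circ\delta_B$, and then descend to the $\En$-completions via Proposition~\ref{prop-functor} together with the fact that both $\Phi_{\En}^B$ and $\Phi_{\En}^C$ are isomorphisms because the objects are $\En$-coactions. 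What your route buys is an explicit unit, hence an honest adjoint-equivalence-style package; what it costs is precisely the Katayama and module bookkeeping that the paper's Hom-set argument avoids entirely, at the price of leaving $\eta$ implicit.

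One small bookkeeping caveat: in your verification of the naturality of $\epsilon$ you test on the generators $j_{A^G_\En}(m)$ and $j_{\contz(G)}(f)$ separately, which implicitly applies $\pi$ to multipliers of $A$ and is not legitimate when $\pi$ is degenerate. Test instead on the products $j_{A^G_\En}(\EE_A(a))j_{\contz(G)}(f)$, since $\EE_A(a)\cdot f\in A_c\subseteq A$; this is how the paper arranges the computation, and it makes the degenerate case painless (alternatively, invoke the bidual extension $\pi''$ as in Remark~\ref{rem:Fix-Functor}).
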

\begin{proof}
Given a weak $G\rtimes G$\nb-algebra $(A,\alpha)$, Theorem~\ref{thm-landstad} implies that the $\En$-general\-ized fixed-point algebra $A^G_\En$ carries an $\En$-coaction $\delta_\En$ of $G$
and there is a canonical $G\rtimes G$\nb-equivariant isomorphism $A^G_\En\rtimes_{\delta_\En} \dualG\congto A$ which is
given as the integrated form $\kappa_A\rtimes\phi_A$
of the covariant representation $(\kappa_A,\phi_A)\colon (A^G_\En,\contz(G))\to \M(A)$, where $\kappa_A\colon A^G_\En\to \M(A)$
is the extension of the inclusion map $A^G_c\into \M(A)$ (\ie, the representation given in Proposition~\ref{prop:RegRepInduced}) and $\phi_A\colon \contz(G)\to \M(A)$
denotes the structural $\contz(G)$-homomorphism of the $G\rtimes G$\nb-algebra $A$. Moreover, Theorem~\ref{thm-landstad} also says that given an $\En$-coaction $(B,\delta)$,
there is an isomorphism $(A^G_\En,\delta_\En)\cong (B,\delta)$. This means that the functor $\Fix_\En^G$ is essentially surjective (between each of the above pairs of categories).
To prove that $\Fix_\En^G$ is an equivalence, it is enough to show that it is full and faithful, \ie, for each pair of weak $G\rtimes G$\nb-algebras $(A,\alpha)$ and $(B,\beta)$,
$G\rtimes G$\nb-equivariant morphisms $(A,\alpha)\to (B,\beta)$ correspond bijectively to $\dualG$\nb-equivariant morphisms $(A^G_\En,\delta_\En^A)\to (B^G_\En,\delta_\En^B)$
via the maps on morphisms induced by the functors $\Fix_\En^G$ and $\CP_{\dualG}$, \ie, the map $\Mor(A,B)\to \Mor(A^G_\En,B^G_\En)$ sending $\pi$ to $\pi^G_\En$ is a bijection,
where $\Mor(x,y)$ denotes the set of morphisms $x\to y$ between objects of the underlying category (which in our case can be any one of the categories appearing in the statement).
We prove this at the same time for both pairs of categories involving ordinary (equivariant) \Star{}homomorphisms or nondegenerate \Star{}homomorphisms into multiplier algebras as their morphisms. For this let $\pi\colon A\to \M(B)$ be a (possibly degenerate) $G\rtimes G$\nb-equivariant \Star{}homomorphism. The induced homomorphism $\pi^G_\En\colon A^G_\En\to \M(B^G_\En)$ is given by $\pi^G_\En(\EE_A(a))=\EE_B(\pi(a))$ for all $a\in A_c$ (see Proposition~\ref{prop-functorFA}), where $\EE_A(a)=\int_G^\st\alpha_t(a)dt$ and similarly for $\EE_B$. We now prove that the following diagram commutes:
$$
\begin{CD}
A^G_\En\rtimes\dualG @>\kappa_A\rtimes\phi_A>> A\\
@V\pi^G_\En\rtimes\dualG VV  @VV\pi V\\
\M(B^G_\En\rtimes\dualG)
@>\kappa_B\rtimes\phi_B>> \M(B)
\end{CD}
$$
In fact, this follows from the following computation: for all $a\in A_c$ and $f\in \contc(X)$ we get
\begin{align*}
\pi((\kappa_A\rtimes\phi_A)(j_{A^G_\En}(\EE_{A}(a))j_G^{A^G_\En}(f))&=\pi(\EE_A(a)\phi_A(f))=\EE_B(\pi(a))\phi_B(f)\\
    &=(\kappa_B\rtimes\phi_B)(j_{B^G_\En}(\EE_B(\pi(a)))j_G^{B^G_\En}(f))\\
    &=(\kappa_B\rtimes\phi_B)(\pi^G_\En\rtimes\dualG)(j_{A^G_\En}(\EE(a))j_G^{A^G_\En}(f)),
\end{align*}
where $j_G$ is an abbreviation for $j_{\contz(G)}$.
The commutativity of the above diagram means that $\pi\colon A\to \M(B)$ is determined by $\pi^G_\En$ via the canonical isomorphisms
$\kappa_A\rtimes\phi_A\colon A^G_\En\rtimes\dualG\congto A$ and $\kappa_B\rtimes\phi_B\colon B^G_\En\rtimes\dualG\congto B$. More precisely, this means that the canonical map
$\Mor(A,B)\to \Mor(A^G_\En,B^G_\En)$ which sends $\pi$ to $\pi^G_\En$ is injective.
It is also surjective because given a (possibly degenerate) $\dualG$\nb-equivariant \Star{}homomorphism $\sigma\colon A^G_\En\to \M(B^G_\En)$, we can consider the corresponding
\Star{}homomorphism $\sigma\rtimes\dualG\colon A^G_\En\rtimes\dualG\to \M(B^G_\En\rtimes\dualG)$ which is $G\rtimes G$\nb-equivariant. Composing $\sigma\rtimes\dualG$ with the canonical
isomorphisms $\kappa_A\rtimes\phi_A$ and $(\kappa_B\rtimes\phi_B)^{-1}$, this yields a $G\rtimes G$\nb-equivariant homomorphism
$\pi\defeq (\kappa_B\rtimes\phi_B)^{-1}\circ(\sigma\rtimes\dualG)\circ \kappa_A\rtimes\phi_A$ such that $\pi^G_\En=\sigma$ (by the above commutative diagram).
Therefore $\Fix^G_\En$ yields an equivalence of categories $\Cstd{G}{G}\congto \CCstd{\dualG}_\En$ and $\Cstnd{G}{G}\congto \CCstnd{\dualG}_\En$. Moreover, our arguments above show that
the composition functor $\CP_{\dualG}\circ\Fix^G_\En$ is naturally isomorphic to the identity functor on $\Cstd{G}{G}$ or $\Cstnd{G}{G}$: the natural isomorphism at a given object (\ie, a weak $G\rtimes G$\nb-algebra) $A$ is the canonical isomorphism $\kappa_A\rtimes\phi_A\colon A^G_\En\rtimes\dualG\congto A$ (notice that the arguments above show that this is natural).
This means that $\CP_{\dualG}$ is a left quasi-inverse for $\Fix^G_\En$. Since we already know that $\Fix^G_\En$ is an equivalence,
it follows that $\CP_{\dualG}$ is a quasi-inverse for $\Fix^G_\En$ (and hence also an equivalence functor).
\end{proof}

\begin{remark}
In this paper we did not consider functoriality of our generalized fixed-point algebra constructions for (equivariant) categories based on correspondences (as done in \cite{Huef-Raeburn-Williams:FunctorialityGFPA} for the reduced case), but it is indeed possible to obtain functoriality also in this setting (and we postpone the full proof of this fact to a forthcoming paper) under the assumption that all actions involved are saturated (which includes the important case where the action on the underlying proper $G$\nb-space $X$ is free -- this is the assumption made in \cite{Huef-Raeburn-Williams:FunctorialityGFPA} -- and, in particular, covers the case of $G\rtimes G$\nb-algebras). We are not going to discuss this, but it is certainly also possible to prove a version of the categorical Landstad duality theorem above for categories of weak $G\rtimes G$\nb- and $\dualG$\nb-algebras with equivariant correspondences as their morphisms.
\end{remark}

\begin{bibdiv}
 \begin{biblist}
\bib{Anantharaman-Delaroche:On_tensor_products}{incollection}{
  author={Anantharaman-Delaroche, Claire},
  title={On tensor products of group {$C\sp \ast $}-algebras and related topics},
  booktitle={Limits of graphs in group theory and computer science},
  pages={1--35},
  publisher={EPFL Press, Lausanne},
  year={2009},
  review={\MRref {2562137}{2011a:46082}},
}

\bib{Huef-Kaliszewski-Raeburn-Williams:Naturality_Rieffel}{article}{
  author={an Huef, Astrid},
  author={Kaliszewski, Steven P.},
  author={Raeburn, Iain},
  author={Williams, Dana P.},
  title={Naturality of Rieffel's Morita equivalence for proper actions},
  journal={Algebr. Represent. Theory},
  volume={14},
  date={2011},
  number={3},
  pages={515--543},
  issn={1386-923X},
  review={\MRref {2785921}{}},
  doi={10.1007/s10468-009-9201-2},
}

\bib{Huef-Raeburn-Williams:FunctorialityGFPA}{incollection}{
  author={an Huef, Astrid},
  author={Raeburn, Iain},
  author={Williams, Dana P.},
  title={Functoriality of Rieffel's generalised fixed-point algebras for proper actions},
  booktitle={Superstrings, geometry, topology, and $C^*$\nobreakdash -algebras},
  series={Proc. Sympos. Pure Math.},
  volume={81},
  pages={9--25},
  publisher={Amer. Math. Soc.},
  address={Providence, RI},
  date={2010},
  review={\MRref {2681756}{2012b:46149}},
}

\bib{anHuef-Raeburn-Williams:Symmetric}{article}{
  author={an Huef, Astrid},
  author={Raeburn, Iain},
  author={Williams, Dana P.},
  title={A symmetric imprimitivity theorem for commuting proper actions},
  journal={Canad. J. Math.},
  volume={57},
  date={2005},
  number={5},
  pages={983--1011},
  issn={0008-414X},
  review={\MRref {2164592}{2006f:46067}},
}

\bib{anHuef-Raeburn-Williams:ProperActions}{article}{
  author={an Huef, Astrid},
  author={Raeburn, Iain},
  author={Williams, Dana P.},
  title={Proper actions on imprimitivity bimodules and decompositions of Morita equivalences},
  journal={J. Funct. Anal.},
  volume={200},
  date={2003},
  number={2},
  pages={401--428},
  issn={0022-1236},
  doi={10.1016/S0022-1236(02)00005-8},
  review={\MRref {1979017}{2004j:46089}},
}

\bib{Brown-Guentner:New_completions}{article}{
    AUTHOR = {Brown, Nathanial P. },
    Author= {Guentner, Erik P.},
     TITLE = {New {$\rm C^\ast$}-completions of discrete groups and
              related spaces},
   JOURNAL = {Bull. Lond. Math. Soc.},
  FJOURNAL = {Bulletin of the London Mathematical Society},
    VOLUME = {45},
      YEAR = {2013},
    NUMBER = {6},
     PAGES = {1181--1193},
      ISSN = {0024-6093},
   MRCLASS = {46L05},
  MRNUMBER = {3138486},
       DOI = {10.1112/blms/bdt044},
       URL = {http://dx.doi.org/10.1112/blms/bdt044},
}

\bib{Buss:generalized_Fourier}{article}{
  author={Buss, Alcides},
  title={A generalized {F}ourier inversion theorem},
  journal={Bull. Braz. Math. Soc. (N.S.)},
  volume={39},
  year={2008},
  number={4},
  pages={555--571},
  issn={1678-7544},
  doi={10.1007/s00574-008-0004-6},
  review={\MRref {2465264}{2010c:43004}},
}
\bib{Buss-Echterhoff:Imprimitivity}{article}{
  author={Buss, Alcides},
  author={Echterhoff, Siegfried},
 title={Imprimitivity theorems for weaky proper actions and duality},
  status={eprint},
  note={\arxiv {1305.5100}, to appear in Ergodic theory \& dynamical Systems},
  year={2013},
 }

\bib{Buss-Echterhoff:Mansfield}{article}{
  author={Buss, Alcides},
  author={Echterhoff, Siegfried},
 title={Weakly proper group actions, Mansfield's imprimitivity and twisted Landstad duality},
  status={eprint},
  note={\arxiv {1310.3934}, to appear in Trans. Amer. Math. Soc.},
  year={2013},
 }

\bib{Echterhoff-Kaliszewski-Quigg:Maximal_Coactions}{article}{
  author={Echterhoff, Siegfried},
  author={Kaliszewski, Steven P.},
  author={Quigg, John},
  title={Maximal coactions},
  journal={Internat. J. Math.},
  volume={15},
  year={2004},
  number={1},
  pages={47--61},
  issn={0129-167X},
  doi={10.1142/S0129167X04002107},
  review={\MRref {2039211}{2004j:46087}},
}

\bib{Echterhoff-Kaliszewski-Quigg-Raeburn:Categorical}{article}{
  author={Echterhoff, Siegfried},
  author={Kaliszewski, Steven P.},
  author={Quigg, John},
  author={Raeburn, Iain},
  title={A categorical approach to imprimitivity theorems for $C^*$\nobreakdash -dynamical systems},
  journal={Mem. Amer. Math. Soc.},
  volume={180},
  date={2006},
  number={850},
  pages={viii+169},
  issn={0065-9266},
  review={\MRref {2203930}{2007m:46107}},
}

\bib{Echterhoff-Raeburn:Multipliers}{article}{
  author={Echterhoff, Siegfried},
  author={Raeburn, Iain},
  title={Multipliers of imprimitivity bimodules and Morita equivalence of crossed products},
  journal={Math. Scand.},
  volume={76},
  date={1995},
  number={2},
  pages={289--309},
  issn={0025-5521},
  review={\MRref {1354585}{97h:46093}},
}

\bib{Fischer}{article}{
author={Fischer, Robert},
title={Maximal coactions of quantum groups},
status={SFB 478 Geometrische Strukturen in der Mathematik, M\"unster, preprint no. 350,},
year={2004},}

\bib{Green:algebras_transformation_groups}{article}{
  author={Green, Philip},
  title={$C^*$-algebras of transformation groups with smooth orbit space},
  journal={Pacific J. Math.},
  volume={72},
  year={1977},
  number={1},
  pages={71--97},
  issn={0030-8730},
  review={\MRref {0453917}{56 \#12170}},
}

\bib{Kaliszewski-Landstad-Quigg:Exotic}{article}{
    author={Kaliszewski, Steven P.},
  author={Landstad, Magnus B.},
  author = {Quigg, John},
     TITLE = {Exotic group {$C^*$}-algebras in noncommutative duality},
   JOURNAL = {New York J. Math.},
  FJOURNAL = {New York Journal of Mathematics},
    VOLUME = {19},
      YEAR = {2013},
     PAGES = {689--711},
      ISSN = {1076-9803},
   MRCLASS = {46L05},
  MRNUMBER = {3141810},
       URL = {http://nyjm.albany.edu:8000/j/2013/19_689.html},
}

\bib{Kaliszewski-Muhly-Quigg-Williams:Fell_bundles_and_imprimitivity_theoremsII}{article}{
  author={Kaliszewski, Steven P.},
  author={Muhly, Paul S.},
  author={Quigg, John},
  author={Williams, Dana P.},
  title={Fell bundles and imprimitivity theorems: towards a universal generalized fixed point algebra},
  status={preprint},
  date={2012},
  note={\arxiv {1206.6739}},
}

\bib{Kaliszewski-Quigg:Categorical_Landstad}{article}{
  author={Kaliszewski, Steven P.},
  author={Quigg, John},
  title={Categorical Landstad duality for actions},
  journal={Indiana Univ. Math. J.},
  volume={58},
  date={2009},
  number={1},
  pages={415--441},
  issn={0022-2518},
  review={\MRref {2504419}{}},
}

\bib{Kaliszewski-Quigg-Raeburn:ProperActionsDuality}{article}{
  author={Kaliszewski, Steven P.},
  author={Quigg, John},
  author={Raeburn, Iain},
  title={Proper actions, fixed-point algebras and naturality in nonabelian duality},
  journal={J. Funct. Anal.},
  volume={254},
  date={2008},
  number={12},
  pages={2949--2968},
  doi={10.1016/j.jfa.2008.03.010},
  review={\MRref {2418615}{2010a:46160}},
}

\bib{Kashiwara-Pinzari-Watatani:Ideal_structure}{article}{
  author={Kajiwara, Tsuyoshi},
  author={Pinzari, Claudia},
  author={Watatani, Yasuo},
  title={Ideal structure and simplicity of the $C^*$\nobreakdash-algebras generated by Hilbert bimodules},
  journal={J. Funct. Anal.},
  volume={159},
  date={1998},
  number={2},
  pages={295--322},
  issn={0022-1236},
  doi={10.1006/jfan.1998.3306},
  review={\MRref {1658088}{2000a:46094}},
}

\bib{Kasparov:Novikov}{article}{
  author={Kasparov, Gennadi G.},
  title={Equivariant \(KK\)-theory and the Novikov conjecture},
  journal={Invent. Math.},
  volume={91},
  date={1988},
  number={1},
  pages={147--201},
  issn={0020-9910},
  review={\MRref {918241}{88j:58123}},
  doi={10.1007/BF01404917},
}

\bib{Katayama:Takesaki_Duality}{article}{
  author={Katayama, Yoshikazu},
  title={Takesaki's duality for a nondegenerate co-action},
  journal={Math. Scand.},
  volume={55},
  date={1984},
  number={1},
  pages={141--151},
  issn={0025-5521},
  review={\MRref {769030}{86b:46112}},
}

\bib{Kirchberg:Nonsemisplit}{article}{
  author={Kirchberg, Eberhard},
  title={On nonsemisplit extensions, tensor products and exactness of group {$C\sp *$}-algebras},
  journal={Invent. Math.},
  volume={112},
  year={1993},
  number={3},
  pages={449--489},
  issn={0020-9910},
  doi={10.1007/BF01232444},
  url={http://dx.doi.org/10.1007/BF01232444},
  review={\MRref {1218321}{94d:46058}},
}

\bib{Meyer:Generalized_Fixed}{article}{
  author={Meyer, Ralf},
  title={Generalized fixed point algebras and square-integrable groups actions},
  journal={J. Funct. Anal.},
  volume={186},
  date={2001},
  number={1},
  pages={167--195},
  issn={0022-1236},
  review={\MRref {1863296}{2002j:46086}},
  doi={10.1006/jfan.2001.3795},
}

\bib{Quigg:Landstad_duality}{article}{
  author={Quigg, John C.},
  title={Landstad duality for $C^*$-coactions},
  journal={Math. Scand.},
  fjournal={Mathematica Scandinavica},
  volume={71},
  year={1992},
  number={2},
  pages={277--294},
  issn={0025-5521},
  coden={MTSCAN},
  mrclass={46L55 (22D25)},
  mrnumber={1212711 (94e:46119)},
  mrreviewer={Robert J. Archbold},
  review={\MRref {1212711}{94e:46119}},
}

\bib{Quigg:FullAndReducedCoactions}{article}{
  author={Quigg, John C.},
  title={Full and reduced $C^*$\nobreakdash -coactions},
  journal={Math. Proc. Cambridge Philos. Soc.},
  volume={116},
  date={1994},
  number={3},
  pages={435--450},
  issn={0305-0041},
  doi={10.1017/S0305004100072728},
  review={\MRref {1291751}{95g:46126}},
}

\bib{Raeburn-Williams:Morita_equivalence}{book}{
  author={Raeburn, Iain},
  author={Williams, Dana P.},
  title={Morita equivalence and continuous-trace {$C^*$}-algebras},
  series={Mathematical Surveys and Monographs},
  volume={60},
  publisher={American Mathematical Society},
  address={Providence, RI},
  year={1998},
  pages={xiv+327},
  isbn={0-8218-0860-5},
  review={\MRref {1634408}{2000c:46108}},
}

\bib{Rieffel:Applications_Morita}{article}{
  author={Rieffel, Marc A.},
  title={Applications of strong Morita equivalence to transformation group $C^*$\nobreakdash -algebras},
  booktitle={Operator algebras and applications, Part I (Kingston, Ont., 1980)},
  series={Proc. Sympos. Pure Math.},
  volume={38},
  pages={299--310},
  publisher={Amer. Math. Soc.},
  address={Providence, R.I.},
  date={1982},
  review={\MRref {679709}{84k:46046}},
}

\bib{Rieffel:Proper}{article}{
  author={Rieffel, Marc A.},
  title={Proper actions of groups on $C^*$\nobreakdash -algebras},
  conference={ title={Mappings of operator algebras}, address={Philadelphia, PA}, date={1988}, },
  book={ series={Progr. Math.}, volume={84}, publisher={Birkh\"auser Boston}, place={Boston, MA}, },
  date={1990},
  pages={141--182},
  review={\MRref {1103376}{92i:46079}},
}

\bib{Rieffel:Integrable_proper}{article}{
  author={Rieffel, Marc A.},
  title={Integrable and proper actions on $C^*$\nobreakdash -algebras, and square-integrable representations of groups},
  journal={Expo. Math.},
  volume={22},
  date={2004},
  number={1},
  pages={1--53},
  issn={0723-0869},
  review={\MRref {2166968}{2006g:46108}},
}
 \end{biblist}
\end{bibdiv}

\vskip 0,5pc

\end{document}